\newcommand{\vtx}[1]{*+[o][F-]{\scriptscriptstyle #1}} 
\newcounter{num}[section] %
\newenvironment{theo}
{\refstepcounter{num}%
\bigskip\noindent{\bf Theorem~\arabic{section}.\arabic{num}. }\it}
\newenvironment{lemma}
{\refstepcounter{num}%
\bigskip\noindent{\bf Lemma~\arabic{section}.\arabic{num}. }\it}
\newenvironment{example}
{\refstepcounter{num}%
\bigskip\noindent{\bf Example~\arabic{section}.\arabic{num}.}}
\newenvironment{remark}
{\refstepcounter{num}%
\bigskip\noindent{\bf Remark~\arabic{section}.\arabic{num}.}\it}
\newcommand{\Ref}[1]{(\ref{#1})}
\newcounter{thepic}
\newcommand{\pic}
{\refstepcounter{thepic}%
Figure~\arabic{thepic}.}
\newenvironment{proof}{\medskip\noindent{\it Proof. }}
{$\Box$ \bigskip}
\newenvironment{proof_theo_part3}{\noindent\textbf{Proof of Theorem~\ref{theo_part3}. }}
{$\Box$ \bigskip}
\newenvironment{eq}{\begin{equation}}{\end{equation}}
\newcommand{\si}{\sigma}
\newcommand{\la}{\lambda}
\newcommand{\de}{\delta}
\newcommand{\ch}[1]{\check{#1}}
\newcommand{\ov}[1]{\overline{#1}}
\newcommand{\un}[1]{{\underline{#1}} }
\newcommand{\tr}{\mathop{\rm tr}}
\newcommand{\mdeg}{\mathop{\rm mdeg}}
\newcommand{\degII}[2]{\mathop{\rm {deg}}^{o}_{#1}(#2)}
\newcommand{\Char}{\mathop{\rm char}}
\newcommand{\sign}{\mathop{\rm{sgn }}}
\newcommand{\NN}{{\mathbb{N}} }
\newcommand{\ZZ}{{\mathbb{Z}} }
\newcommand{\Q}{\mathcal{Q}}    
\newcommand{\QG}{\mathcal{G}}    
\newcommand{\QT}{\mathcal{T}}    
\newcommand{\n}{\boldsymbol{n}} 
\newcommand{\Ver}[1]{\mathop{{\rm ver}(#1)}} 
\newcommand{\Arr}[1]{\mathop{{\rm arr}(#1)}} 
\newcommand{\path}{\mathop{\rm path}} 
\newcommand{\OmegaNull}{\Omega_1}  
\newcommand{\OmegaZwei}{\Omega_2}
\newcommand{\OmegaDrei}{\Omega_3}
\newcommand{\OmegaMy}{\Omega}
\newcommand{\Symm}{\mathcal{S}}                 
\newcommand{\rectangle}[2]{
\begin{picture}(0,0)
\put(-#1,-#2){\line(1,0){#1}}\put(0,-#2){\line(1,0){#1}}
\put(-#1,#2){\line(1,0){#1}}\put(0,#2){\line(1,0){#1}}
\put(-#1,-#2){\line(0,1){#2}}\put(-#1,0){\line(0,1){#2}}
\put(#1,-#2){\line(0,1){#2}}\put(#1,0){\line(0,1){#2}}
\end{picture}}
\newcommand{\rombL}{
\begin{picture}(0,0)
\put(80,0){\vector(-4,3){40} \put(13,15){$\scriptstyle 2$}}%
\put(40,30){\vector(0,-1){22} \put(3,-14){$\scriptstyle s$}} %
\put(40,-8){\vector(0,-1){22} \put(3,-13){$\scriptstyle s$}}%
\put(40,-30){\vector(-4,3){40}\put(3,10){$\scriptstyle t$}}%
\put(40,-30){\vector(4,3){40} \put(-17,10){$\scriptstyle 2$}}%
\put(40,4){\circle*{1}}\put(40,0){\circle*{1}}\put(40,-4){\circle*{1}}%
\put(-10,31){
{\xymatrix@C=1.3cm@R=0.9cm{ %
& \ar@/^/@{<-}[ld] \ar@/_/@{<-}[ld] \\%
\\}}}%
\put(19,16){\put(2,-2){\circle*{1}}\put(0,0){\circle*{1}}\put(-2,2){\circle*{1}}}%
\put(11,22){$\scriptstyle 1$}\put(25,4){$\scriptstyle 1$}\put(-6,-2){$\scriptstyle u$}\put(39,32){$\scriptstyle v$}%
\end{picture}} %
\newcommand{\rombC}{
\begin{picture}(0,0)
\put(0,0){\vector(4,3){40} \put(-27,15){$\scriptstyle 2$}}%
\put(80,0){\vector(-4,3){40} \put(13,15){$\scriptstyle 2$}}%
\put(40,30){\vector(0,-1){22} \put(3,-14){$\scriptstyle 4$}}%
\put(40,-8){\vector(0,-1){22} \put(3,-13){$\scriptstyle 4$}}%
\put(40,-30){\vector(-4,3){40}\put(3,10){$\scriptstyle 2$}}%
\put(40,-30){\vector(4,3){40} \put(-17,10){$\scriptstyle 2$}}%
\put(40,4){\circle*{1}}\put(40,0){\circle*{1}}\put(40,-4){\circle*{1}}%
\end{picture}} %
\newcommand{\rombR}{
\begin{picture}(0,0)
\put(0,0){\vector(4,3){40} \put(-27,15){$\scriptstyle 1$}}%
\put(40,30){\vector(0,-1){22} \put(3,-14){$\scriptstyle 1$}}%
\put(40,-8){\vector(0,-1){22} \put(3,-13){$\scriptstyle 1$}}%
\put(40,-30){\vector(-4,3){40}\put(3,10){$\scriptstyle 1$}}%
\put(40,4){\circle*{1}}\put(40,0){\circle*{1}}\put(40,-4){\circle*{1}}%
\end{picture}} %
\newcommand{\rombCnull}{
\begin{picture}(0,0)
\put(0,0){\vector(4,3){40} \put(-27,15){$\scriptstyle 2$}}%
\put(80,0){\vector(-4,3){40} \put(13,15){$\scriptstyle 1$}}%
\put(40,30){\vector(0,-1){22} \put(3,-14){$\scriptstyle 3$}}%
\put(40,-8){\vector(0,-1){22} \put(3,-13){$\scriptstyle 3$}}%
\put(40,-30){\vector(-4,3){40}\put(3,10){$\scriptstyle 2$}}%
\put(40,-30){\vector(4,3){40} \put(-17,10){$\scriptstyle 1$}}%
\put(40,4){\circle*{1}}\put(40,0){\circle*{1}}\put(40,-4){\circle*{1}}%
\end{picture}} %
\newcommand{\rombRnull}{
\begin{picture}(0,0)
\put(0,0){\vector(4,3){40} \put(-27,15){$\scriptstyle 2$}}%
\put(40,30){\vector(0,-1){22} \put(3,-14){$\scriptstyle 2$}}%
\put(40,-8){\vector(0,-1){22} \put(3,-13){$\scriptstyle 2$}}%
\put(40,-30){\vector(-4,3){40}\put(3,10){$\scriptstyle 2$}}%
\put(40,4){\circle*{1}}\put(40,0){\circle*{1}}\put(40,-4){\circle*{1}}%
\end{picture}} %
\newcommand{\rombSmall}{
\begin{picture}(0,0)
\put(0,0){\vector(1,1){30}}%
\put(60,0){\vector(-1,1){30}}%
\put(30,30){\vector(0,-1){22}}%
\put(30,-8){\vector(0,-1){22}}%
\put(30,-30){\vector(-1,1){30}}%
\put(30,-30){\vector(1,1){30}}%
\put(30,4){\circle*{1}}\put(30,0){\circle*{1}}\put(30,-4){\circle*{1}}%
\end{picture}} %
\newcommand{\rombSmallC}{
\begin{picture}(0,0)
\put(0,0){\vector(2,1){60}}%
\put(120,0){\vector(-2,1){60}}%
\put(60,30){\vector(0,-1){22}}%
\put(60,-8){\vector(0,-1){22}}%
\put(60,-30){\vector(-2,1){20}}\put(20,-10){\vector(-2,1){20}}%
\put(30,-15){\put(0,0){\circle*{1}}\put(4,-2){\circle*{1}}\put(-4,2){\circle*{1}}}%
\put(60,-30){\vector(2,1){20}}\put(100,-10){\vector(2,1){20}}%
\put(90,-15){\put(0,0){\circle*{1}}\put(4,2){\circle*{1}}\put(-4,-2){\circle*{1}}}%
\put(60,4){\circle*{1}}\put(60,0){\circle*{1}}\put(60,-4){\circle*{1}}%
\put(59,32){$\scriptstyle v_1$}%
\put(63,8){$\scriptstyle v_2$}%
\put(63,-12){$\scriptstyle v_{t-1}$}%
\put(59,-36){$\scriptstyle v_t$}%
\end{picture}} %
\newcommand{\cycleSmall}{
\begin{picture}(0,0)
\put(-5,0){\xymatrix@C=1.75cm@R=2cm{ %
\ar@/^/@{->}[r]&\\
\\
}}%
\put(50,0){\vector(-1,-2){10}}%
\put(10,-20){\vector(-1,2){10}}%
\put(25,-20){\put(0,0){\circle*{1}}\put(4,0){\circle*{1}}\put(-4,0){\circle*{1}}}%
\end{picture}} %
\begin{document}
\title{Indecomposable invariants of quivers for dimension $(2,\ldots,2)$ and maximal paths, II}
 \author{
A.A. Lopatin \\
Omsk Branch of Institute of Mathematics, SBRAS, \\
Pevtsova street, 13,\\
Omsk 644099 Russia \\
artem\underline{ }lopatin@yahoo.com \\
http://www.iitam.omsk.net.ru/\~{}lopatin/\\
}
\date{} 
\maketitle

\begin{abstract}
An upper bound on degrees of elements of a minimal generating system for invariants of
quivers of dimension $(2,\ldots,2)$ is established over a field of arbitrary
characteristic and its precision is estimated. The proof is based on the reduction to the
problem of description of maximal paths satisfying certain condition.
\end{abstract}

2010 Mathematics Subject Classification: 13A50; 16G20; 05C38. 

Keywords: representations of quivers, invariants, oriented graphs, maximal paths. 


\section{Introduction}\label{section_intro}
We work over an infinite field $K$ of arbitrary characteristic $\Char(K)$. All vector
spaces, algebras, and modules are over $K$ unless otherwise stated and all algebras are
associative.

This paper is a completion of~\cite{Lopatin_Comm3} and we use the same notations as in~\cite{Lopatin_Comm3}. Let us recall some of them. A {\it quiver} $\Q=(\Ver{\Q},\Arr{\Q})$ is a finite oriented graph, where $\Ver{\Q}$ is
the set of vertices and $\Arr{\Q}$ is the set of arrows. The notion of quiver was introduced by Gabriel in~\cite{Gabriel_1972} as an
effective mean for description of different problems of the linear algebra. 

The head (the tail, respectively) of an arrow $a$ is denoted by $a'$ ($a''$, respectively). We say that  $a=a_1\cdots a_s$ is a {\it path} in $\Q$ (where
$a_1,\ldots,a_s\in\Arr{\Q}$), if $a_1'=a_2'',\ldots, a_{s-1}'=a_s''$; and $a$ is a {\it
closed} path in a vertex $v$, if $a$ is a path and $a_1''=a_s'=v$. The head of the path
$a$ is $a'=a_s'$ and the tail of $a$ is $a''=a_1''$. Denote
$\Ver{a}=\{a_1'',a_1',\ldots,a_s'\}$, $\Arr{a}=\{a_1,\ldots,a_s\}$, and $\deg(a)=s$. Given 
a closed path $a$ and $w\in\Ver{\Q}$, we set $\deg_w(a)=\#\{i\,|\,a_i'=w,\,1\leq i\leq s\}$. A closed path $a$ is called {\it
primitive} if $\deg_w(a)=1$ for all $w\in\Ver{a}$. Denote by $m(\Q)$ the maximal degree of primitive closed paths in
$\Q$. Closed paths $a_1,\ldots,a_s$ in $\Q$ are called {\it incident} if
$a_1'=\cdots=a_s'$.

For a quiver $\Q$ and a {\it dimension vector} $\n=(\n_{v}\,|\,v\in\Ver{\Q})$ denote by
$I(\Q,\n)$ the {\it algebra of invariants} of representations of $\Q$.  The algebra
$I(\Q,\n)$ is embedded into the algebra of (commutative) polynomials
$K[x_{ij}(a)\,|\,a\in\Arr{\Q},\ 1\leq i\leq \n_{a'},\,1\leq j\leq \n_{a''}]$. Denote by %
$X_{a}=(x_{ij}(a))$ the $n_{a'}\times n_{a''}$ {\it generic} matrix and by $\si_k(X)$ the $k$-th coefficient
in the characteristic polynomial of an $n\times n$ matrix $X$,
i.e., %
$$\det(\la E-X)=\la^n-\si_1(X)\la^{n-1}+\cdots+(-1)^n\si_n(X).
$$%

\begin{theo}\label{theo_Donkin}(Donkin~\cite{Donkin_1994}) The $K$-algebra $I(\Q,\n)$ is generated by $\si_k(X_{a_s}\cdots X_{a_1})$ for all closed paths $a=a_1\cdots a_s$ in $\Q$ (where
$a_1,\ldots,a_s\in\Arr{\Q}$) and $1\leq k\leq n_{a'}$. 
\end{theo}

\bigskip
\noindent Notice that $I(\Q,\n)$ has a
grading by degrees that is given by the formula: $\deg(\si_k(X_{a_s}\cdots X_{a_1}))=ks$.

Investigation of $I(\Q,\n)$ was originated from the partial case of a quiver with one vertex. Sibirskii~\cite{Sibirskii_1968}, Razmyslov~\cite{Razmyslov_1974} and Procesi~\cite{Procesi_1976} described generators and relations in the case of characteristic zero field. As about the case of arbitrary characteristic, the first step was performed by Donkin in~\cite{Donkin_1992a}, where he established generators. Relations between generators of $I(\Q,\n)$ were established by Domokos~\cite{Domokos_1998} in characteristic zero case and by Zubkov~\cite{Zubkov_Fund_Math_2001} in arbitrary characteristic case. Theorem~\ref{theo_Donkin} was generalized to the case of action of arbitrary classical linear groups in~\cite{Lopatin_so_inv} using approach from~\cite{LZ1}.

By the Hilbert--Nagata Theorem on invariants, $I(\Q,\n)$ is a finitely generated
graded algebra. But the mentioned generating system is not finite. So it gives rise to
the problem to find out a minimal (by inclusion) homogeneous system of generators
(m.h.s.g.). Let $D(\Q,\n)$ be the 
least upper bound for the degrees of elements of a m.h.s.g.~of $I(\Q,\n)$. Note that taking elements from Theorem~\ref{theo_Donkin} of the degree less or equal to $D(\Q,\n)$ we obtain the finite system of generators. A {\it decomposable} invariant is equal to a polynomial in elements of strictly lower degree. Obviously, $D(\Q,\n)$ is equal to
the highest degree of indecomposable invariants. 

In~\cite{Lopatin_Comm3} we established an upper bound on $D(\Q,\n)$ for an arbitrary quiver $\Q$ and $\n=(2,2,\ldots,2)$. In this paper we improve essentially the mentioned upper bound and estimate its precision (see Theorem~\ref{theo_main_invariants} and Remark~\ref{remark_compare}). Note that for a quiver with one vertex and $\n=(2)$ a m.h.s.g.~was found in~\cite{Sibirskii_1968},~\cite{Procesi_1984},~\cite{DKZ_2002}; in case $\n=(3)$ a m.h.s.g.~was described in~\cite{Lopatin_Comm1},~\cite{Lopatin_Comm2} and a system of parameters for a quiver with three loops was found in~\cite{Lopatin_Sib}. A m.h.s.g.~for the algebra of semi-invariants of a quiver of dimension $(2,\ldots,2)$ was established in~\cite{Lopatin_semi2}. References to other results on generating systems for invariants are given, for example,  in~\cite{Lopatin_Comm3}.

Without loss of generality we can assume that $\Q$ is a {\it strongly connected} quiver, i.e., there exists a closed path in $\Q$ that contains all vertices of $\Q$ (for the details, see Section~1 of~\cite{Lopatin_Comm3}).

For positive integers $n,d,m$ define $M(n,d,m)$ as follows:
\begin{enumerate}
\item[1)] if $\Char(K)=2$, then %
$$
M(n,d,m)=\left\{ %
\begin{array}{rl}
2m,& \text{if } d=n=m\\
2m(d-n+\frac{1}{2}),& \text{if } d< n+2\left[\frac{n-1}{m}\right]\text{ and } n>m\geq2\\
m(d-n-1)+2n,& \text{otherwise}\\
\end{array} \right.;
$$
\item[2)] if $\Char(K)\neq2$, then
$$
M(n,d,m)=\left\{ %
\begin{array}{rl}
2n,& \text{if } n=m\text{ and }d\in\{n,n+1\}\\
3n,& \text{otherwise}\\
\end{array} \right..
$$
\end{enumerate}
Here $[\alpha]$ stands for the greatest integer that does not exceed
$\alpha$. 

Denote by $\Q(n,d,m)$ the set of all strongly connected quivers $\Q$ with
$\#\Ver{\Q}=n$, $\#\Arr{\Q}=d$, and $m(\Q)=m$. A criterion when $\Q(n,d,m)$ is
not empty is given by Lemma~\ref{lemma_criterion_for_QuiverNull}. For short, we write $D(n,d,m)$ for $\max\{D(\Q,(2,\ldots,2))\,|\,\Q\in\Q(n,d,m)\}$. Our main result is the
following theorem.

\begin{theo}\label{theo_main_invariants}
For $\Q(n,d,m)\neq\emptyset$ we have $D(n,d,m)\leq M(n,d,m)$. Moreover,
\begin{enumerate}
\item[1)] if $\Char(K)=2$, then 
$$D(n,d,m)\geq M(n,d,m)-m.$$

\item[2)] if $\Char(K)\neq2$, $d\geq n+2\left[\frac{n-1}{m}\right]+m$ or $n=m$, then $$D(n,d,m)= M(n,d,m).$$
\end{enumerate}
\end{theo}

\medskip
As immediate corollary of this theorem we obtain that if  $\Q\in\Q(n,d,m)$, then the algebra of invariants $I(\Q,(\de_1,\ldots,\de_n))$ with $\de_1,\ldots,\de_n\leq 2$ is generated by elements of degree at most $M(n,d,m)$.

\begin{remark}\label{remark_compare} Let $\Char(K)=2$. In~\cite{Lopatin_Comm3} we gave the following upper bound: $D(n,d,m)\leq md$ for $\Q(n,d,m)\neq\emptyset$. By Theorem~\ref{theo_main_invariants}, for $m>2$ the deviation of this upper bound is 
\begin{eq}\label{eq_estimation}
md - D(n,d,m)\to \infty\text{ as } n,d\to\infty,
\end{eq} 
where we assume that $m$ is fixed and $n,d\to\infty$ in such a way that at each step $\Q(n,d,m)\neq\emptyset$. But the deviation of the upper bound from Theorem~\ref{theo_main_invariants} is less or equal to the constant $m$, i.e., 
$$0\leq M(n,d,m)-D(n,d,m)\leq m.$$
\end{remark}

\bigskip
As in~\cite{Lopatin_Comm3}, for a quiver $\Q$ introduce an equivalence $\equiv$ on the set of all closed paths
extended with an additional symbol $0$. For any paths $a,b$ such that $ab$ is a closed path and any incident closed paths $a_1,a_2,\ldots$ we
define
\begin{enumerate}
\item[1.] $ab\equiv ba$;

\item[2.] $a_{\si(1)}\cdots a_{\si(t)}\equiv \sign(\si)\,a_1\cdots a_t$, where $t\geq2$ and $\si\in
\Symm_t$;

\item[3.] $a_1^2a_2\equiv0$;

\item[4.] if $\Char(K)=2$, then $a_1^2\equiv0$; if $\Char(K)\neq2$, then
$a_1a_2a_3a_4\equiv0$.
\end{enumerate}
We write $M(\Q)$ for the maximal degree of a closed path $a$ in $\Q$ satisfying
$a\not\equiv0$. The following lemma is Lemma~1.2 of~\cite{Lopatin_Comm3}, which was proved using~\cite{Zubkov_Fund_Math_2001}.

\begin{lemma}\label{lemma_reduction}
Let $a=a_1\cdots a_s$ be a closed path in $\Q$, where $a_1,\ldots,a_s\in\Arr{\Q}$. Then
$\tr(X_{a_s}\cdots X_{a_1})\in I(\Q,(2,2,\ldots,2))$ is decomposable if and only if
$a\equiv0$.
\end{lemma}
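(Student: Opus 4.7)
My plan is to use Zubkov's theorem from \cite{Zubkov_Fund_Math_2001}, which gives a complete description of the defining ideal of relations among the generators $\si_k(X_{a_s}\cdots X_{a_1})$ of $I(\Q,\n)$; the lemma would be obtained by specializing this description to $\n = (2,\ldots,2)$ and recasting it in the combinatorial language of closed paths. The proof then splits naturally into the two implications.

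For the ``if'' direction ($a\equiv 0 \Rightarrow$ decomposability), I would verify that each generating move of $\equiv$ is a valid trace identity for $2\times 2$ matrices. Rule 1 is the cyclic property of trace. Rule 3 follows from the Cayley--Hamilton identity applied to $X_{a_1}$: since $X_{a_1}^2 = \si_1(X_{a_1})\, X_{a_1} - \si_2(X_{a_1})\, E$, one obtains
\[
\tr(X_{a_2} X_{a_1}^2) \;=\; \si_1(X_{a_1})\, \tr(X_{a_2} X_{a_1}) \;-\; \si_2(X_{a_1})\, \tr(X_{a_2}),
\]
which is a polynomial in generators of strictly lower degree. Rule 4 in characteristic $2$ is immediate from $\tr(X^2) = \tr(X)^2 - 2\det(X) = \tr(X)^2$. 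Rule 4 in characteristic different from $2$, as well as rule 2 for $t\geq 3$, would both be obtained by iterated polarization of Cayley--Hamilton and then pairing against further matrices under the trace; these yield the decomposability statements for $a_1 a_2 a_3 a_4$ and for the signed sum $\sum_\si \sign(\si)\, a_{\si(1)}\cdots a_{\si(t)}$ modulo a single representative ordering. Rule 2 at $t=2$ is compatible with rule 1 either because the sign is trivial in characteristic $2$, or, in other characteristics, because decomposability is insensitive to an overall sign, so the collapse $+a_1 a_2 \equiv -a_1 a_2$ of signed paths contributes no new decomposability assertion.

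For the ``only if'' direction, I would invoke Zubkov's theorem directly. The ideal of relations among the Donkin generators of $I(\Q,(2,\ldots,2))$ is generated, modulo cyclic symmetry of trace, by the Cayley--Hamilton identity, its complete polarizations, and the characteristic-specific identities above. Translating each generating relation into the path language gives precisely rules 2--4, so any expression of $\tr(X_{a_s}\cdots X_{a_1})$ as a polynomial in strictly lower-degree generators can be realized by a finite chain of $\equiv$-moves carrying the closed path $a$ to the symbol $0$.

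The principal technical obstacle is the precise translation between Zubkov's relations --- stated in the polynomial algebra generated by the $\si_k$'s --- and the combinatorial rules on signed closed paths. One has to check that the signs in rule 2 correctly record the alternation produced by the polarized Cayley--Hamilton identities, and that nothing new appears in the quiver setting beyond the single-matrix case, where paths can be concatenated only when their endpoints match.
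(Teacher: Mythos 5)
The paper does not prove this lemma at all: it imports it as Lemma~1.2 of \cite{Lopatin_Comm3}, which was itself established via Zubkov's description of the relations \cite{Zubkov_Fund_Math_2001} --- precisely the route you propose, with the ``if'' direction reduced to Cayley--Hamilton trace identities for $2\times 2$ matrices and the ``only if'' direction reduced to the multidegree-by-multidegree translation of Zubkov's generating relations into the path rules 1--4. Your outline is correct and matches the intended argument; the translation step you flag as the principal obstacle is exactly the content of the cited lemma and is likewise not reproduced in this paper.
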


\begin{remark}\label{remark_det}
Let $a=a_1\cdots a_s$ be a closed path in $\Q$, where $a_1,\ldots,a_s\in\Arr{\Q}$. If
$q=\det(X_{a_s}\cdots X_{a_1})\in I(\Q,(2,\ldots,2))$ is indecomposable, then $a$ is a primitive closed path and $\deg(a)\leq m$. Thus, $\deg(q)\leq
M(n,d,m)$.
\end{remark}

\bigskip

Section~\ref{section_auxiliary} contains necessary definitions and results from~\cite{Lopatin_Comm3}. If $\Char(K)\neq2$, then the upper bound on $M(\Q)$ is
calculated in Lemma~\ref{lemma_char_0}; otherwise, we establish the upper bound on
$M(\Q)$ in Theorems~\ref{theo_part3} and~\ref{theo_part6}. In Lemma~\ref{lemma_example}
we estimate a precision of the given upper bound. Taking into account
Lemma~\ref{lemma_reduction} and Remark~\ref{remark_det} together with the fact that
$I(\Q,(2,2,\ldots,2))$ is generated by indecomposable invariants, we complete the
proof of Theorem~\ref{theo_main_invariants}.

In Sections~\ref{section_part4}--\ref{section_part6} we assume that $\Char(K)=2$.
Sections~\ref{section_part4},~\ref{section_part2}, and~\ref{section_part3} are dedicated
to the proof of Theorem~\ref{theo_part3}, which consists of two steps.

At first, we introduce the set of multidegrees $\OmegaZwei(\Q)$ with the property that if
$h$ is a closed path and $\mdeg(h)\in\OmegaZwei(\Q)$, then $h\not\equiv0$ (see
Section~\ref{section_part4} and Remark~\ref{remark_inclusions}). Moreover,
Lemma~\ref{lemma_mdeg} implies that $\OmegaZwei(\Q)$ is the maximal (by inclusion) set
with the given property. In Theorem~\ref{theo_part4} of Section~\ref{section_part4} we
give some upper bound on $|\un{\de}|$ for $\un{\de}\in\OmegaZwei(\Q)$. Note that there
can be a closed path $h\not\equiv0$ such that $\mdeg(h)\not\in\OmegaZwei(\Q)$ (see
Example~\ref{ex_proper}).

During the second step we extract some information from the fact that $h\not\equiv0$ (see
Lemma~\ref{lemma_partII_main2}). Then we find out a closed subpath $c$ in $h$ such that
for two arrows $b_1,b_2$ of $c$ we have $\deg_{b_1}(h)=\deg_{b_2}(h)=1$ and some
additional properties are valid (see Lemma~\ref{lemma_p26}).  The main idea of the proof
of Theorem~\ref{theo_part3} is to substitute $c$ with a loop in order to obtain a quiver
$\QG$ with $\#\Arr{\QG}<\#\Arr{\Q}$ and to use induction hypothesis. The main difficulty
is that we can not claim that $c$ is a primitive closed path, thus we can not say that
$\deg(c)\leq m$. To estimate $\deg(c)$ we apply Lemma~\ref{lemma_p28}.

Section~\ref{section_part6} contains the proof of Theorem~\ref{theo_part6}. In
Section~\ref{section_example} we consider some examples in order to prove
Lemma~\ref{lemma_example}.

\section{Auxiliary results}\label{section_auxiliary}

\subsection{Notations}\label{subsection_notations}
For a path $a=a_1\cdots a_s$ in a quiver $\Q$, where $a_1,\ldots,a_s\in\Arr{\Q}$, and $b\in\Arr{\Q}$, $v\in\Ver{\Q}$, we set  
\begin{enumerate}
\item[$\bullet$] $\deg_b(a)=\#\{i\,|\,a_i=b,\,1\leq i\leq s\}$; 

\item[$\bullet$] $\deg_v(a)=\max\{m_1,m_2\}$, where $m_1=\#\{i\,|\,a_i'=v,\,1\leq i\leq s\}$ and $m_2=\#\{i\,|\,a_i''=v,\,1\leq i\leq s\}$;

\item[$\bullet$] $\degII{v}{a}=\#\{i\,|\,a_i'=v,\,1\leq i\leq s-1\}$.
\end{enumerate}
Let $\un{\de}\in\NN^{\#\Arr{\Q}}$, where $\NN$ stands for non-negative
integers. Then the path $a$ is called {\it $\un{\de}$-double} if $a$ is a primitive closed path and $\de_{a_i}\geq2$ for all $i$. The definition of {\it strongly connected components} of an arbitrary quiver $\QG$ is well known (for example, see Section~1 of~\cite{Lopatin_Comm3}). The following notions were defined in~Section~5 of~\cite{Lopatin_Comm3}:
\begin{enumerate}
\item[$\bullet$] the {\it multidegree} $\mdeg(a)$ of a path $a$;

\item[$\bullet$] the {\it empty path} $1_v$ in a vertex $v$;

\item[$\bullet$] a {\it subpath} of a path $a$;

\item[$\bullet$] {\it $h$-restriction} of $\Q$ to $V$, where $V\subset\Ver{\Q}$ and $h$ is a path in $\Q$ (see also Example~5.1 of~\cite{Lopatin_Comm3}).
\end{enumerate}

Denote by $\path(\Q)$ the set of all paths and
empty paths in $\Q$. If we consider a path, then we assume that it is non-empty unless
otherwise stated; if we write $a\in\path(\Q)$, then we assume that a path $a$ can be
empty. 

Dealing with equivalences we use the following conventions. If we write $a\equiv b$,
then we assume that $a$ and $b$ are closed paths in $\Q$. If we write $ab$ for paths $a$
and $b$, then we assume that $a'=b''$. To explain how we apply formulas to prove some
equivalence $a\equiv b$ we split the word $a$ into parts using dots. 

For closed paths $a,b$ we write $a\sim b$ if $a=c_1c_2$ and $b=c_2c_1$ for some
$c_1,c_2\in\path(\Q)$. For $\un{\de},\un{\theta}\in\NN^l$ we
set $\un{\de}\geq\un{\theta}$ if and only if $\de_i\geq \theta_i$ for all $i$ and
define $|\un{\de}|=\de_1+\cdots+\de_l$.

Let $x_1,\ldots,x_s$ be all arrows in $\Q$ from $u$ to $v$, where $u,v\in\Ver{\Q}$. Then
denote by $\ch{x}$ any arrow from $x_1,\ldots,x_s$, by $\{\ch{x}\}$ the set
$\{x_1,\ldots,x_s\}$, and say that $\ch{x}$ is an arrow from $u$ to $v$. Schematically,
we depict arrows $x_1,\ldots,x_s$ as
$$
\vcenter{
\xymatrix@C=1cm@R=1cm{ %
\vtx{u}\ar@/^/@{->}[r]^{\ch{x}} &\vtx{v}\\
}}.
$$
For a path $a$ in $\Q$ denote $\deg_{\ch{x}}(a)=\sum_{i=1}^s\deg_{x_i}(a)$. As an
example, an expression $\ch{x}a_1\cdots\ch{x}a_k$ stands for a path $x_{i_1}a_1\cdots
x_{i_k}a_k$ for some $1\leq i_j\leq s$ ($1\leq j\leq k$). Similarly, if $x_1,\ldots,x_s$
are loops in $v\in\Ver{\Q}$, then $\ch{x}^k$ stands for a closed path $x_{i_1}\cdots
x_{i_k}$ for some $i_1,\ldots,i_k$.

The next two lemmas are well known.

\begin{lemma}\label{lemma_mdeg}
Suppose $\Q$ is a strongly connected quiver and $\un{\de}\in\NN^{\#\Arr{\Q}}$. Then the
following conditions are equivalent:
\begin{enumerate}
\item[a)] There is a closed path $h$ in $\Q$ such that $\mdeg(h)=\un{\de}$ and
$\Arr{h}=\Arr{\Q}$; in particular, $\Ver{h}=\Ver{\Q}$.

\item[b)] We have $\de_a\geq1$ for all $a\in\Arr{\Q}$ and $\sum_{a'=v}\de_a =
\sum_{a''=v}\de_a$ for all $v\in\Ver{\Q}$, where the sums range over all $a\in\Arr{\Q}$
satisfying the given conditions.
\end{enumerate}
\end{lemma}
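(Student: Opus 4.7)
The plan is to recognize this as a disguised statement of the Eulerian circuit theorem for directed multigraphs. Condition (b) is exactly the standard necessary-and-sufficient condition (balanced in/out degree plus connectivity on the support of the multiplicities) for a directed multigraph to admit an Eulerian circuit, and condition (a) is the Eulerian circuit viewed inside $\Q$ with arrow $a$ traversed $\de_a$ times.

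For the direction (a)$\Rightarrow$(b) I would argue directly. Write $h=a_1\cdots a_s$ with $a_i\in\Arr{\Q}$. Since $\Arr{h}=\Arr{\Q}$ each arrow of $\Q$ appears among $a_1,\ldots,a_s$, giving $\de_a=\deg_a(h)\geq1$. For the flow condition, fix $v\in\Ver{\Q}$ and look at those indices $i$ with $a_i'=v$. Each such $i$ is followed (cyclically, since $h$ is a closed path) by an index $i+1$ with $a_{i+1}''=v$, and this correspondence is a bijection between arrows of $h$ ending at $v$ and arrows of $h$ starting at $v$. Summing over arrows grouped by name gives $\sum_{a'=v}\de_a=\sum_{a''=v}\de_a$.

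For the more substantial direction (b)$\Rightarrow$(a) I would pass to the auxiliary multi-quiver $\Q'$ obtained from $\Q$ by replacing each arrow $a\in\Arr{\Q}$ with $\de_a$ parallel copies having the same head and tail. Because $\Q$ is strongly connected and $\de_a\geq1$ for every $a$, the multi-quiver $\Q'$ remains strongly connected; and by hypothesis (b), at every vertex $v$ the total in-degree in $\Q'$ equals the total out-degree. By the classical Eulerian circuit theorem for directed multigraphs (Euler--Hierholzer), $\Q'$ admits a closed Eulerian walk $\tilde h$ traversing each arrow of $\Q'$ exactly once. Projecting $\tilde h$ back to $\Q$ (identifying each copy with its original arrow) produces a closed path $h$ in $\Q$ with $\deg_a(h)=\de_a$ for every $a\in\Arr{\Q}$, i.e.\ $\mdeg(h)=\un{\de}$, and in particular $\Arr{h}=\Arr{\Q}$ since each $\de_a\geq1$.

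Since the paper explicitly states this lemma is well known, I would keep the write-up short, citing the Eulerian-circuit theorem for directed multigraphs rather than reproving Hierholzer's construction. The only point that genuinely requires care is verifying that $\Q'$ inherits strong connectivity from $\Q$, which is immediate once one observes that every $\Q$-arrow survives in $\Q'$ because $\de_a\geq1$; this is the small hypothesis in (b) that one must not forget to use.
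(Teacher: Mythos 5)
Your proof is correct: the paper itself gives no argument for this lemma, simply declaring it ``well known,'' and the Euler--Hierholzer circuit argument you supply (balance plus strong connectivity of the support, pass to the multigraph with $\de_a$ parallel copies, take an Eulerian circuit, project back) is exactly the standard proof the author is invoking. Both directions are handled correctly, and you rightly flag the one point needing care, namely that $\de_a\geq1$ for all $a$ is what transfers strong connectivity to the auxiliary multigraph.
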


\bigskip
\noindent We write $\de(i,j)$ for the Kronecker symbol. 

\begin{lemma}\label{lemma_criterion_for_QuiverNull}
For positive integers $n,d,m$ the set $\Q(n,d,m)$ is not empty if and only if
one of the following possibilities holds:
\begin{enumerate}
\item[a)] $n=m=1$;

\item[b)] $n\geq m\geq2$ and $d\geq n+l-\de(0,r)$, where $n-1=l(m-1)+r$, $l\geq1$, and
$0\leq r\leq m-2$.
\end{enumerate}
\end{lemma}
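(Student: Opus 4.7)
The statement has two directions: exhibiting a quiver in $\Q(n,d,m)$ and lower-bounding $d$. Sufficiency in case~(a) is immediate: the single-vertex quiver with $d\geq 1$ loops is strongly connected and has $m(\Q)=1$. For case~(b), given $n\geq m\geq 2$ and $d\geq n+l-\delta(0,r)$, my plan is to build a ``bouquet'' at a distinguished vertex $v_0$: attach $l$ primitive cycles of length $m$ through $v_0$ that pairwise meet only at $v_0$, and, if $r\geq 1$, attach one further primitive cycle of length $r+1$ through $v_0$. A direct count yields $n=1+l(m-1)+r$ vertices and exactly $n+l-\delta(0,r)$ arrows. Any primitive closed path has $\deg_{v_0}=1$, so it cannot enter and leave $v_0$ more than once, and is therefore contained in a single attached cycle; thus $m(\Q)=m$. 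For $d$ strictly larger than the minimum, I would add the remaining $d-n-l+\delta(0,r)$ arrows as loops at $v_0$, which do not affect $m(\Q)$ since they are primitive closed paths of length $1\leq m$.

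For necessity, if $n=1$ the only primitive closed paths are loops, forcing $m=1$ and $d\geq 1$, matching~(a). For $n\geq 2$, strong connectivity produces a closed path of length at least $2$ and hence a primitive sub-cycle of length $\geq 2$, giving $m\geq 2$; moreover, a primitive cycle of length $m$ has $m$ distinct vertices, so $n\geq m$. The main tool is the ear decomposition of a strongly connected digraph: build $\Q$ from an initial primitive cycle $C_0$ by successive additions of ears $P_1,\ldots,P_k$, where each $P_i$ is a directed path (possibly closed) whose endpoints lie in the subquiver built so far and whose internal vertices are new. Writing $p_i$ for the number of internal vertices of $P_i$, one has $n=|C_0|+\sum p_i$ and $d=|C_0|+\sum(p_i+1)=n+k$, so everything reduces to showing $k\geq l-\delta(0,r)$.

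The key estimate is $p_i\leq m-1$ for every ear. A closed ear $P_i$ is itself a primitive cycle of length $p_i+1$, so $p_i+1\leq m$. For an open ear from $u_i$ to $v_i$, I would concatenate $P_i$ with a simple path from $v_i$ back to $u_i$ inside the subquiver that existed before $P_i$ was added (it is strongly connected, so such a simple path exists, and its vertices are automatically disjoint from the new internal vertices of $P_i$). A direct vertex-count shows the resulting closed path is primitive, of length $p_i+1+r_i\leq m$ with $r_i\geq 1$, whence $p_i\leq m-2$. Summing and using $|C_0|\leq m$ yields $n-m\leq \sum p_i\leq k(m-1)$. Since $n-m=(l-1)(m-1)+r$ with $0\leq r\leq m-2$ and $k$ is integral with $r/(m-1)<1$, we conclude $k\geq l-\delta(0,r)$, hence $d\geq n+l-\delta(0,r)$. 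The only delicate point I foresee is the primitivity check in the open-ear case, which amounts to a careful but elementary accounting of which vertices are old, new, or shared between $P_i$ and the return path.
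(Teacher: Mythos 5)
The paper states this lemma without proof (it is introduced with ``The next two lemmas are well known''), so there is no in-paper argument to compare against; judged on its own, your proof is correct and complete. Both halves check out: the bouquet of $l$ petals of length $m$ plus one petal of length $r+1$ (when $r\geq1$) has exactly $1+l(m-1)+r=n$ vertices and $n+l-\de(0,r)$ arrows, every primitive closed path passes through $v_0$ at most once and hence lies in a single petal, and surplus arrows can be absorbed as loops at $v_0$ without raising $m(\Q)$. For necessity, the ear decomposition gives $d=n+k$ exactly, and your key estimate $p_i\leq m-1$ is sound: a closed ear is itself a primitive cycle, and for an open ear the concatenation with a vertex-simple return path in the previously built (strongly connected) subquiver is primitive because the return path uses only old vertices while the ear's internal vertices are new, yielding even $p_i\leq m-2$. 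Combined with $|C_0|\leq m$ this gives $k(m-1)\geq n-m=(l-1)(m-1)+r$, and integrality of $k$ with $0\leq r<m-1$ forces $k\geq l-\de(0,r)$. The only points worth spelling out in a final write-up are the standard facts you lean on implicitly: that a strongly connected multidigraph on $\geq2$ vertices contains a primitive cycle of length $\geq2$ (so $m\geq2$), and that the ear decomposition exists and partitions the arrow set even in the presence of loops and parallel arrows; both are routine.
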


\begin{lemma}\label{lemma_subquiver}
Suppose $\Q_1,\Q_2$ are strongly connected quivers and $\Q_1\subset\Q_2$. Then
$$\#\Arr{\Q_2}-\#\Arr{\Q_1}\geq \#\Ver{\Q_2}-\#\Ver{\Q_1}+1.$$
\end{lemma}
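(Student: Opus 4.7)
The plan is to reduce the inequality to a standard counting fact via a contraction construction. Set $k=\#\Ver{\Q_2}-\#\Ver{\Q_1}$ and $\ell=\#\Arr{\Q_2}-\#\Arr{\Q_1}$, and build an auxiliary quiver $\QG$ as follows: identify all vertices of $\Q_1$ into a single vertex $\ast$, discard the arrows of $\Q_1$ (which would otherwise become loops at $\ast$), and keep each arrow in $\Arr{\Q_2}\setminus\Arr{\Q_1}$ with its endpoints updated so that any endpoint originally lying in $\Ver{\Q_1}$ becomes $\ast$. Then $\QG$ has $k+1$ vertices and exactly $\ell$ arrows, so it suffices to prove that $\QG$ is strongly connected.

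The key observation is that every arrow of $\Q_1$ has both endpoints in $\Ver{\Q_1}$, so any arrow of $\Q_2$ with at least one endpoint outside $\Ver{\Q_1}$ belongs to $\Arr{\Q_2}\setminus\Arr{\Q_1}$ and therefore survives in $\QG$. Given $v\in\Ver{\Q_2}\setminus\Ver{\Q_1}$, strong connectivity of $\Q_2$ yields a path in $\Q_2$ from $v$ to some vertex of $\Q_1$; truncating it at its first vertex lying in $\Ver{\Q_1}$, every arrow of the truncation has at least one endpoint outside $\Ver{\Q_1}$, so after contraction these arrows give a path in $\QG$ from $v$ to $\ast$. The symmetric argument, applied to a path from $\Q_1$ back to $v$ in $\Q_2$, yields a path from $\ast$ to $v$ in $\QG$. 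Concatenating through $\ast$ connects any two vertices of $\QG$, so $\QG$ is strongly connected.

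Finally, the inclusion $\Q_1\subset\Q_2$ must be strict (otherwise the claimed inequality would read $0\geq 1$). If $k=0$, then at least one new arrow must be present, so $\ell\geq 1=k+1$. If $k\geq 1$, then $\QG$ is a strongly connected quiver on $k+1\geq 2$ vertices; every such vertex must have out-degree at least $1$, and summing out-degrees over all vertices of $\QG$ counts $\ell$, giving $\ell\geq k+1$, as required. The only delicate point is the verification that contracting $\Q_1$ preserves strong connectivity on the remaining arrows, which is precisely the content of the second paragraph; the rest is bookkeeping.
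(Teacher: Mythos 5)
Your proof is correct. It is, at bottom, the same counting as the paper's, just packaged differently: the paper simply observes that each $v\in\Ver{\Q_2}\backslash\Ver{\Q_1}$ admits an arrow $a\in\Arr{\Q_2}\backslash\Arr{\Q_1}$ with $a'=v$ (any incoming arrow of $v$ in $\Q_2$ is automatically new), and that there is one further new arrow $b$ with $b'\in\Ver{\Q_1}$ (truncate a path from an outside vertex into $\Q_1$ at its first entry); since these $k+1$ arrows have pairwise distinct heads outside $\Ver{\Q_1}$ except for the last, they are distinct, and the inequality follows in two lines. You instead contract $\Q_1$ to a point, verify that strong connectivity survives the contraction, and then count by out-degrees in the contracted quiver. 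Both arguments inject the $k+1$ vertex classes into the set of new arrows, one by heads and one by tails; your route costs an extra construction and a connectivity verification but makes the ``$+1$'' term conceptually transparent (it is the out-degree of the contracted vertex $\ast$), and you are more explicit than the paper about the degenerate cases $k=0$ and $\Q_1=\Q_2$ (the latter being implicitly excluded in the paper as well, since otherwise the stated inequality fails). No gaps.
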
 %
\begin{proof}
For every $v\in\Ver{\Q_2}\backslash\Ver{\Q_1}$ there is an
$a\in\Arr{\Q_2}\backslash\Arr{\Q_1}$ with $a'=v$. There also exists a
$b\in\Arr{\Q_2}\backslash\Arr{\Q_1}$ satisfying $b'\in\Ver{\Q_1}$. These remarks imply
the required formula.
\end{proof}

\subsection{Basic equivalences}\label{subsection_basic}

\begin{lemma}\label{lemma_char_0}
Suppose $\Char(K)\neq2$. If $\Q\in\Q(n,d,m)$, $h$ is a closed path in $\Q$, and
$h\not\equiv0$, then $\deg(h)\leq M(n,d,m)$.
\end{lemma}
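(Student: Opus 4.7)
My plan is to split the task into the general bound $\deg(h)\le 3n$, valid for every $\Q\in\Q(n,d,m)$, and the sharper bound $\deg(h)\le 2n$ in the exceptional branch $n=m$ and $d\in\{n,n+1\}$; together these match $M(n,d,m)$ in both branches of its definition.

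For the general bound, I would fix a vertex $v\in\Ver{h}$, use rule~1 to cyclically present $h$ as a closed path based at $v$, and split it at each return to $v$ into $k=\deg_v(h)$ incident closed subpaths $h\equiv c_1c_2\cdots c_k$ at $v$. When $k\ge 4$, the tail $c_4\cdots c_k$ is again a closed path at $v$, so rule~4 (which requires $\Char(K)\neq 2$) applied to the four incident closed paths $c_1,c_2,c_3,(c_4\cdots c_k)$ yields $h\equiv 0$. When $k=3$ and some $c_i=c_j$, a cyclic rotation via rule~1 puts the two equal factors at the start, producing the form $c_i^2c_\ell$, which rule~3 annihilates. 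Therefore $h\not\equiv 0$ forces $\deg_v(h)\le 3$ at every $v\in\Ver{\Q}$, and summing yields $\deg(h)=\sum_v\deg_v(h)\le 3n$.

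For the exceptional branch, $n=m$ forces $\Q$ to contain a Hamilton primitive cycle $c$ of length $n$, and $d\in\{n,n+1\}$ means $\Q$ is this cycle possibly together with one extra arrow $b$. When $d=n$, every closed path is some $c^k$, and rule~3 applied as $c^2\cdot c^{k-2}$ for $k\ge 3$ forces $k\le 2$, so $\deg(h)\le 2n$. When $d=n+1$, the key structural observation is that at every vertex of $\Q$ there are at most two primitive incident closed subpaths: the rotated cycle $c$ together with either the loop $b$ (if $b$ is a loop at that vertex) or a shortcut closed path $\tilde b$ of length at most $n$ using $b$ (if $b$ is a non-loop arrow and the vertex lies on the chord together with the short cycle arc). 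This is forced because along $c$ every vertex has a unique outgoing arrow, so the only branching point in continuing a closed subpath is the tail of $b$. Consequently, the segments in the decomposition of $h$ at such a vertex are each one of these two primitive options; since only two types occur, any $k\ge 3$ segments contain a repeated factor that rule~3 kills. Hence $k\le 2$ at every vertex, and $\deg(h)$ is bounded by twice the length of the longer primitive option, which is at most $2n$.

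The main obstacle I anticipate is the lifting of rule~4 from literally four factors to any $k\ge 4$ incident closed factors at a common vertex; I handle it by fusing the tail $c_4\cdots c_k$ into a single incident closed path via rule~1 and applying rule~4 to the resulting four-factor product. The remaining work is the short enumeration of primitive incident closed subpaths at each vertex of the near-cycle quivers in the exceptional branch, which is routine given the cycle-plus-one-extra-arrow structure.
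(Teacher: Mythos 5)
Your overall strategy is the same as the paper's: the $3n$ bound via a vertex of degree $\ge 4$ and rule~4 is exactly the paper's first step, and in the exceptional branch the paper likewise identifies $\Q$ as a Hamiltonian cycle $a=a_1\cdots a_n$ plus one extra arrow $b$ and reduces $h$ to $\pm\, a^r c^s$, where $c$ is the unique short cycle through $b$, before applying rule~3. The $d=n$ case is fine as you wrote it.

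However, in the $d=n+1$ case your key structural claim is false as stated. Write $b$ as an arrow from $v_1$ to $v_k$, so the short cycle is $c=b\,a_k\cdots a_n$ and the vertices $v_2,\ldots,v_{k-1}$ do not lie on it. At such a vertex $v_j$, a segment of the decomposition of $h$ (a closed subpath that first returns to $v_j$ at its end) need not be primitive and need not be one of your ``two options'': it can be $a_j\cdots a_n\,(b\,a_k\cdots a_n)^t\,a_1\cdots a_{j-1}$ for any $t\ge 0$, since after reaching $v_1$ the path may wind around the short cycle arbitrarily often without revisiting $v_j$. So ``the segments at every vertex are each one of these two primitive options'' fails precisely off the short cycle, and your conclusion ``$k\le 2$ at every vertex'' is not justified there. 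The repair is immediate and brings you back to the paper's argument: base the decomposition at the tail $v_1$ of $b$ (or any vertex of the short cycle). At $v_1$ the only outgoing arrows are $a_1$ and $b$, and either choice forces the rest of the first-return subpath, so every segment is exactly $a$ or exactly $c$; then rules~1--2 give $h\equiv\pm\,a^rc^s$, rule~3 kills everything except $r+s\le 2$ (and $a^2$, $c^2$, $ac$ survive), and each surviving word has degree at most $2n$. With that one correction the proof is complete and coincides with the paper's.
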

\begin{proof} We claim that $\deg(h)\leq 3n$. If $\deg(h)>3n$, then there is a vertex $v\in\Ver{\Q}$ such that $\deg_v(h)\geq 4$. Therefore, $h\equiv h_1\cdots h_4$ for some closed paths $h_1,\ldots,h_4$ in $v$. Thus $h\equiv0$ by the definition of the equivalence $\equiv$; a contradiction.

To complete the proof, it is enough to consider the case of $n=m$ and $d\in\{n,n+1\}$.

1. If $d=n$, then $\Arr{\Q}=\{a_1,\ldots,a_n\}$, where $a=a_1\cdots a_n$ is a primitive closed path. Then $h\equiv a^s$ for some $s>0$. If $s\geq3$, then $h\equiv0$; a contradiction. Thus $\deg(h)\leq 2n$. The case of $n=1$ and $d=n+1$ can be treated similarly. 

2. Let $n=m\geq2$ and $d=n+1$. In this case $\Q$ is
$$
\vcenter{
\xymatrix@=.5cm{ %
&\vtx{v_1}  \ar@/_/@{<-}[ld]_{a_n}  \ar@/^/@{->}[rd]^{a_1}  \ar@/^/@{->}[ddd]_{b}& \\
\vtx{v_n}  \ar@/_/@{..}[d]  &&\vtx{v_2} \ar@/^/@{..}[d]\\
\vtx{\quad\,}  \ar@/_/@{<-}[rd]_{a_{k}}  &&\vtx{\quad\,} \ar@/^/@{->}[ld]^{a_{k-1}}\\
&\vtx{v_k}    & \\
}},
$$
where $1\leq k\leq n$. Denote $a=a_1\ldots a_n$ and 
$$
c=\left\{
\begin{array}{rl}
b, & k=1\\
ba_k\ldots a_n, & \text{otherwise}\\
\end{array}
\right..$$%
We have $h\equiv a^r c^s$ for some $r,s\geq0$. If $r=0$ or $s=0$, then $\deg(h)\leq 2n$ (see Part~1 of the lemma). Assume that $r,s>0$. If $r\geq2$ or $s\geq2$, then $h\equiv0$; a contradiction. Hence $\deg(h)=n+\deg{c}\leq 2n$.
\end{proof}

In what follows we assume that $\Char(K)=2$ unless otherwise stated. We will use the following remark without references to it.

\begin{remark}\label{remark_no_change}
Suppose $f,h$ are closed paths in $\Q$ and $b$ is a subpath of $f$. Let the equivalence
$f\equiv h$ follows from the formulas of the form $a_{\si(1)}\cdots a_{\si(t)}\equiv
a_1\cdots a_t$, where $a_1,\ldots,a_t$ are closed paths in $v\in\Ver{\Q}$ satisfying
$\degII{v}{b}=0$, $t\geq2$, and $\si\in \Symm_t$. Then $b$ is also a subpath of $h$.
\end{remark}

\bigskip
There following three lemmas are Lemmas~6.3,~6.8, and~6.9 of~\cite{Lopatin_Comm3}, respectively. 

\begin{lemma}\label{lemma_L0}
Let $h$ be a closed path in $\Q$ and $\{\ch{p}\}$ be loops of $\Q$ in some
$v\in\Ver{\Q}$. Then $h\equiv \ch{p}^kb$, where $k\geq0$, $b\in\path(\Q)$, and
$\deg_{\ch{p}}(b)=0$.

Moreover, suppose $a\in\Arr{h}$ and $a'\neq a''$. If $a'=v$, then $h\equiv a
\ch{p}^kb_0$; if $a''=v$, then $h\equiv \ch{p}^kab_0$, where, as above,
$\deg_{\ch{p}}(b_0)=0$.
\end{lemma}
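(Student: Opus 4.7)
The plan is to decompose $h$ into pieces at $v$ and then use the rewriting rules to herd the $\ch{p}$'s together. If $h$ never visits $v$, then it contains no arrow from $\{\ch{p}\}$, so the first statement is trivial with $k=0$ and $b=h$, and the moreover part is vacuous. Otherwise, by the cyclic shift $xy\equiv yx$ I may assume that $h$ starts at $v$. Cutting $h$ at every occurrence of $v$ writes $h\sim h_1\cdots h_N$ with $N=\deg_v(h)$, each $h_i$ being a closed path at $v$ with $v$ appearing only at its endpoints. Each $h_i$ is therefore either a single arrow from $\{\ch{p}\}$, or a closed path whose first arrow leaves $v$ and which never revisits $v$ internally; in the latter case $h_i$ contains no arrow from $\{\ch{p}\}$ at all.

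The first assertion now follows from relation~2 of the equivalence: the $h_i$ are pairwise incident closed paths at $v$, so (with no sign in characteristic $2$) they may be permuted freely. Moving the $k$ loop pieces to the front yields $h\equiv\ch{p}^k b$, where $b$ is the concatenation of the remaining non-loop $h_i$'s and $\deg_{\ch{p}}(b)=0$ automatically.

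For the moreover part I fix one occurrence of the non-loop arrow $a$ inside some $h_j$; since $a$ is not a loop, $h_j$ must be of the non-loop type. Consider first the case $a'=v$: because $h_j$ has $v$ only at its endpoints and $a$ has head $v$, the arrow $a$ is the last arrow of $h_j$, so $h_j=ca$ for a nonempty path $c$ from $v$ to $a''$, and $c$ inherits from $h_j$ the property $\deg_{\ch{p}}(c)=0$. Permuting the pieces by relation~2 to place $h_j$ first, then all $k$ loop pieces, then the remaining non-loop pieces $b''$, I obtain $h\equiv ca\,\ch{p}^k b''$; one application of $xy\equiv yx$ with $x=c$ and $y=a\,\ch{p}^k b''$ then produces $h\equiv a\,\ch{p}^k b_0$ with $b_0=b''c$ and $\deg_{\ch{p}}(b_0)=0$. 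The case $a''=v$ is symmetric: now $a$ is the first arrow of its piece $h_j=ac$, and arranging the pieces as $\ch{p}^k\cdot h_j\cdot b''$ delivers $h\equiv\ch{p}^k a\,b_0$ directly with $b_0=cb''$.

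The one point requiring genuine care is the claim that a non-loop piece $h_i$ avoids $\{\ch{p}\}$ altogether; this is exactly what the endpoint-only condition $\deg_v(h_i)=1$ enforces, since any $\ch{p}$-arrow inside $h_i$ would force an extra interior visit to $v$. Everything else reduces to routine manipulation with relations~1 and~2, and characteristic $2$ spares any sign bookkeeping.
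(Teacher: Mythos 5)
Your argument is correct. Note that the paper does not actually prove this lemma: it is imported verbatim as Lemma~6.3 of~\cite{Lopatin_Comm3}, so there is no in-paper proof to compare against. Your route --- cyclically rotating $h$ to start at $v$, cutting it at each passage through $v$ into incident closed paths $h_1\cdots h_N$ each meeting $v$ only at its endpoints (so that a non-loop piece contains no arrow of $\{\ch{x}\}$-type at $v$, and a piece containing the non-loop arrow $a$ must have $a$ as its last, respectively first, arrow according as $a'=v$ or $a''=v$), and then permuting the pieces via relation~2 and finishing with one cyclic shift --- is exactly the standard argument one expects for this statement, and every step is justified: the pieces are genuinely incident closed paths, $\Char(K)=2$ kills the sign $\sign(\si)$, and the degenerate cases ($h$ avoiding $v$, or $N=1$) are handled.
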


\bigskip
Suppose a quiver $\Q$ contains a path $a=a_1\cdots a_s$, where
$a_1,\ldots,a_s\in\Arr{\Q}$ are pairwise different.  Let $h$ be a closed path in $\Q$ such that $\deg_{a_i}(h)\geq2$ for all $i$
and there is a $b\in\Arr{h}$ satisfying $b\neq a_i$ for all $i$.

\begin{lemma}\label{lemma_aaa}
Using the preceding notation we have $h\equiv a_1\cdots a_sf$ for some $f\in\path(\Q)$.
Moreover,
\begin{enumerate}
\item[a)] if $b'=a''_1$, then $h\equiv b\,a_1\cdots a_sf$ for some $f\in\path(\Q)$;

\item[b)] if $b''=a'_s$, then $h\equiv a_1\cdots a_s\,bf$ for some $f\in\path(\Q)$.
\end{enumerate}
\end{lemma}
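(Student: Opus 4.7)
The plan is to prove the main claim by induction on $s$, then derive (a) and (b) by analogous decomposition arguments. The key device throughout is to factor $h$, up to rule~1 rotation, as a product of three incident closed paths at a well-chosen vertex, and then permute two of them by rule~2.

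The base case $s=1$ of the main claim is immediate from rule~1, since $\deg_{a_1}(h)\ge 1$ forces $a_1\in\Arr{h}$. For the inductive step with $s\ge 2$, the hypothesis of the lemma remains valid for the path $a_1\cdots a_{s-1}$ (the given $b$ is still distinct from $a_1,\ldots,a_{s-1}$), so by induction $h\equiv a_1\cdots a_{s-1} g$ for some $g\in\path(\Q)$ from $a_s''$ to $a_1''$. Since $a_s\neq a_i$ for $i<s$, we get $\deg_{a_s}(g)=\deg_{a_s}(h)\ge 2$, so I split $g=g_1\,a_s\,g_2\,a_s\,g_3$ at the first two occurrences of $a_s$. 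If $g_1$ is empty the conclusion is immediate; otherwise $g_1$, $a_s g_2$, and $a_s g_3\,a_1\cdots a_{s-1}$ are three incident closed paths at $a_s''$ (the last one being closed there because $a_{s-1}'=a_s''$). Rotating $h$ to start at $a_s''$, transposing the first two factors by rule~2, and then rotating back so that $a_1$ leads, yields $h\equiv a_1\cdots a_s\,(g_2 g_1 a_s g_3)$.

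Parts (a) and (b) are handled by the same three-piece idea, with the key vertex shifted to $a_1''$ in (a) (where $b$ ends, by $b'=a_1''$) and to $a_s'$ in (b) (where $b$ starts, by $b''=a_s'$). For the base case $s=1$ of (a), rotate $h$ via rule~1 to $h\equiv b r$, split $r=r_1\,a_1\,r_{2a}\,a_1\,r_{2b}$ at the first two occurrences of $a_1$ in $r$ (which exist since $\deg_{a_1}(r)=\deg_{a_1}(h)\ge 2$), and observe that $r_1$, $a_1 r_{2a}$, and $a_1 r_{2b}\,b$ are three incident closed paths at $a_1''$; here the closure of the third piece uses precisely the hypothesis $b'=a_1''$. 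Applying rule~2 with the transposition of the first and third factors and then rotating to begin with $b$ gives $h\equiv b\,a_1\,(r_{2a} r_1 a_1 r_{2b})$. The inductive step of (a), and the analogous arguments for (b), combine the appropriate inductive hypothesis with the same three-piece decomposition used in the main claim.

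The main obstacle is verifying that the three candidate pieces really are closed paths at a common vertex; this is where the hypotheses $a_{s-1}'=a_s''$ (for the main claim) and $b'=a_1''$ or $b''=a_s'$ (for (a) and (b)) enter essentially. The hypothesis $\deg_{a_i}(h)\ge 2$ is exactly what makes the three-piece split feasible, and since the argument takes place under $\Char(K)=2$ the signs produced by rule~2 vanish and the equivalences above become equalities in the equivalence class.
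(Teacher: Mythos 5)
A preliminary remark: this paper does not prove Lemma~\ref{lemma_aaa} at all --- it is imported as Lemma~6.8 of \cite{Lopatin_Comm3} --- so there is no in-paper proof to compare against. Your argument is the natural one and its core is sound: rotate the word, decompose it into incident closed paths at a well-chosen vertex, transpose two of them by relation~2, and use $\Char(K)=2$ (the standing assumption in this part of the paper) to kill the sign. The base case of the main claim, the three-piece decomposition $g_1,\ a_sg_2,\ a_sg_3a_1\cdots a_{s-1}$ in the inductive step, and the base case of part~a) (where $b'=a_1''$ is exactly what closes the third factor $a_1r_{2b}b$) all check out, including the degenerate case where $g_1$ is empty.

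Two points need repair. First, a genuine logical gap in the induction: from the inductive hypothesis ``$h\equiv a_1\cdots a_{s-1}g$ for \emph{some} $g$'' you infer $\deg_{a_s}(g)=\deg_{a_s}(h)$, but the relation $\equiv$ does not preserve multidegree --- any two closed paths that are both $\equiv0$ are equivalent to each other (for loops $a,b,c$ at one vertex, $a^2b\equiv0\equiv a^2bc$ by relation~3), so the hypothesis as stated does not guarantee a $g$ containing $a_s$ twice. The fix is to strengthen the statement being proved by induction to say that the equivalence is realized using relations~1 and~2 only (hence preserves $\mdeg$); your construction delivers exactly this, so the repair costs one sentence. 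Second, part~b) and the inductive steps of~a) and~b) are asserted rather than proved, and for~b) the transcription is not quite ``the same three-piece decomposition'': if you first apply the main claim and then split $g=g_1bg_2$ at the first occurrence of $b$, you obtain only \emph{two} incident factors at $a_s'$, and transposing two incident closed paths is just a cyclic rotation, which returns you to the word you started from. The working mirror image splits at the \emph{last} two occurrences of the relevant arrow: for the base case of~b), rotate $h$ to end in $b$, write the remainder as $r_1a_1r_2a_1r_3$ with $\deg_{a_1}(r_3)=0$, and permute the incident factors $(r_2a_1)(r_3)(br_1a_1)$ at $a_1'$ into $(br_1a_1)(r_3)(r_2a_1)$; alternatively, deduce~b) from~a) by the arrow-reversal symmetry of $\equiv$. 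With these two adjustments the proof is complete.
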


\bigskip
Let $a$ and $h$ be paths as above. For $1\leq i\leq s$ denote
$v_i=a_i''$. We assume that the path $a$ is closed and primitive, $s\geq2$, $b'\neq b''$,
and $b',b''\in\{v_2,v_k\}$ for some $k\in\{1,3,4,\ldots,s\}$. Schematically this is
depicted as
$$
\vcenter{
\xymatrix@=.5cm{ %
&\vtx{v_2}  \ar@/_/@{<-}[ld]_{a_1}  \ar@/^/@{->}[rd]^{a_2}  \ar@/^/@{-}[ddd]_{b}& \\
\vtx{v_1}  \ar@/_/@{..}[d]  &&\vtx{v_3} \ar@/^/@{..}[d]\\
\vtx{\quad\,}  \ar@/_/@{<-}[rd]_{a_{k}}  &&\vtx{\quad\,} \ar@/^/@{->}[ld]^{a_{k-1}}\\
&\vtx{v_k}    & \\
}}.
$$

\begin{lemma}\label{lemma_L4}
Using the preceding notation we have $h\equiv a_1a_2 f_1\, a_1a_2 f_2$ for some
$f_1,f_2\in\path(\Q)$.
\end{lemma}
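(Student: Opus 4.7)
The plan is to prove that the two-arrow word $a_1a_2$ appears at least twice among the cyclically consecutive pairs of arrows in $h$, and then use rule~1 ($ab\equiv ba$) to rotate $h$ so that these two occurrences begin two ``$a_1a_2$-blocks''.

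First I would perform a local flow analysis at $v_2$. Since $a$ is primitive and $s\geq 2$, the vertices $v_1,\ldots,v_s$ are pairwise distinct, so among the arrows $a_1,\ldots,a_s$ only $a_1$ has head $v_2$ and only $a_2$ has tail $v_2$. The hypotheses $b',b''\in\{v_2,v_k\}$ and $b'\neq b''$ force $b$ to be incident to $v_2$ on exactly one side. I would then split into two cases according to that side.

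If $b''=v_2$ (so $b$ exits $v_2$), then every entry into $v_2$ by an arrow of $h$ uses $a_1$, and the $\deg_{a_2}(h)\geq 2$ occurrences of $a_2$ in $h$ are each cyclically preceded by an $a_1$, producing at least two cyclic adjacencies $a_1a_2$. If instead $b'=v_2$ (so $b$ enters $v_2$), then every exit from $v_2$ is via $a_2$, and the $\deg_{a_1}(h)\geq 2$ occurrences of $a_1$ in $h$ are each cyclically followed by an $a_2$, again yielding at least two cyclic adjacencies $a_1a_2$.

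Finally, because $a_1\neq a_2$ (as $a$ is primitive), two distinct cyclic occurrences of the length-two pattern $a_1a_2$ in $h$ are automatically disjoint. Picking two such occurrences and applying rule~1 iteratively, I would rotate $h$ to start at the first one, obtaining $h\equiv a_1a_2\,f_1\,a_1a_2\,f_2$ with $f_1,f_2\in\path(\Q)$ the (possibly empty, for instance when $s=2$) subpaths strictly between consecutive occurrences. The main delicate step is the flow bookkeeping at $v_2$; once it is set up, the cyclic rotation is an immediate consequence of rule~1.
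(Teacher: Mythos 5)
There is a genuine gap here, and it is fatal to the whole strategy. Your flow analysis at $v_2$ tacitly assumes that the only arrows of $h$ incident to $v_2$ are $a_1$, $a_2$ and $b$. Nothing in the hypotheses guarantees this: $h$ is only required to contain each $a_i$ at least twice and to contain some arrow $b\notin\{a_1,\ldots,a_s\}$ joining $v_2$ and $v_k$; it may perfectly well traverse further arrows of $\Q$ entering or leaving $v_2$ (and in the application inside Lemma~\ref{lemma_p14}, where $v_i$ is merely a leaf of a spanning tree, it typically does). Concretely, take $s=2$, arrows $a_1$ from $v_1$ to $v_2$, $a_2$ from $v_2$ to $v_1$, $b$ from $v_2$ to $v_1$, one further arrow $c$ from $v_1$ to $v_2$, and $h=a_1ba_1a_2ca_2$. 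All hypotheses of the lemma hold (with $k=1$), yet your claim in the case $b''=v_2$ that every entry of $h$ into $v_2$ uses $a_1$ fails because of $c$, and the cyclic word $h$ contains the factor $a_1a_2$ exactly once. So the combinatorial statement you reduce everything to --- that $a_1a_2$ occurs at least twice among the cyclically consecutive pairs of $h$ --- is simply false.

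The deeper problem is that the lemma cannot be proved with rule~1 alone: rotation never changes the cyclic word of $h$, so it can never manufacture a second literal occurrence of $a_1a_2$. The stated equivalence genuinely requires rule~2, i.e.\ reordering incident closed paths. In the example above one writes $h\sim (ba_1)(a_2c)(a_2a_1)$ as a product of three closed paths incident at $v_2$, permutes them to $(ba_1)(a_2a_1)(a_2c)$, and rotates to obtain $h\equiv a_1a_2\,1_{v_1}\,a_1a_2\,cb$. (The paper itself does not reprove this statement --- it is quoted as Lemma~6.9 of~\cite{Lopatin_Comm3} --- but any correct proof must carry out this kind of regrouping at $v_2$ and $v_k$, which is precisely the step your argument omits. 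The disjointness observation at the end, using $a_1\neq a_2$, and the final rotation are fine; it is the existence of the two blocks that is unproven.)
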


\begin{lemma}\label{lemma_pA291}
Suppose $\Q$ is a quiver with $n$ vertices and $d$ arrows. Let $h$ be a closed path in $\Q$ and $h\not\equiv0$.
Then there exist pairwise different primitive closed paths $b_1,\ldots,b_r$,
$c_1,\ldots,c_t$ in $\Q$, where $r,t\geq0$ and $r+t\leq d-n+1$, such that
$$\mdeg(h)=\sum_{i=1}^r \mdeg(b_i)+2\sum_{k=1}^t\mdeg(c_k);$$
and there are pairwise different arrows $x_1,\ldots,x_r$, $y_1,\ldots,y_t$,
$z_1,\ldots,z_t$ in $\Q$ satisfying
\begin{eq}\label{eq_pA291_1}
y_j,z_j\in\Arr{c_j}\text{ and }\deg_{y_j}(h)=\deg_{z_j}(h)=2,
\end{eq}
\vspace{-0.5cm}
\begin{eq}\label{eq_pA291_new0}
x_i\in\Arr{b_i}\text{ and }\deg_{x_i}(h)-2\sum_{k=1}^t \deg_{x_i}(c_k)=1 
\end{eq}
for any $1\leq i\leq r$, $1\leq j\leq t$.
\end{lemma}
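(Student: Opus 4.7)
The strategy is to work on the support $\Q'=(\Ver{h},\Arr{h})$ of $h$, which is strongly connected since $h$ is a closed path traversing each of its arrows. By Lemma~\ref{lemma_mdeg}, $\mdeg(h)$ is a positive integer-valued flow on $\Q'$. A standard cycle-space comparison gives $d'-n'+1\leq d-n+1$, where $d',n'$ are the arrow and vertex counts of $\Q'$, so it suffices to produce a decomposition with $r+t\leq d'-n'+1$.

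I would proceed by induction on the cyclomatic number $d'-n'+1$ of the support. For the base case $d'-n'+1=1$, the support $\Q'$ consists of a single primitive cycle $p$ of length $n'$, and every closed path in $\Q'$ is cyclically a power of $p$. Since rule~4 gives $p^2\equiv 0$ in characteristic~$2$, the hypothesis $h\not\equiv 0$ forces $h\equiv p$; setting $r=1$, $t=0$, $b_1=p$, and letting $x_1$ be any arrow of $p$ fulfills all the required conditions.

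For the inductive step with $d'-n'+1\geq 2$, the plan is to extract a single primitive cycle from $\mdeg(h)$, either once (as a new $b_i$) or twice (as a new $c_j$), so that the residual flow arises from a closed path $h'\not\equiv 0$ whose support has strictly smaller cyclomatic number. Concretely: pick an arrow $a\in\Arr{h}$ with multiplicity $\deg_a(h)$ minimal among arrows in the support and form a primitive cycle $b\subseteq\Q'$ through $a$. Using the rearrangement Lemmas~\ref{lemma_L0},~\ref{lemma_aaa}, and~\ref{lemma_L4}, rewrite $h\equiv b\cdot h'$ with $h'$ a closed path of multidegree $\mdeg(h)-\mdeg(b)$; if $\deg_a(h)$ is odd, take $b$ as $b_1$ with $x_1:=a$, while if $\deg_a(h)$ is even, extract $b$ twice (making $b$ one of the $c_j$'s with $y_1,z_1$ chosen among arrows of $b$ whose multiplicity in $h$ is exactly $2$). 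The extraction arrow (or pair of arrows) is then removed from the support of the residual, so the cyclomatic number strictly decreases; applying the induction hypothesis to the residual closed path yields the decomposition, and the distinguishing arrows coming from subsequent extractions cannot involve the arrows extracted earlier, automatically satisfying the required multiplicity identities.

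The main obstacle is the extraction step itself: proving that the chosen primitive cycle $b$ can always be factored out of $h$ via the $\equiv$-equivalences to produce a residual $h'\not\equiv 0$, and that the dichotomy between single and double extraction is dictated consistently by the parity structure of $\mdeg(h)$ together with the constraint $h\not\equiv 0$ (the latter via rule~4 in characteristic~$2$, which forbids $b\cdot b$ as a consecutive substring in a nontrivial closed path). This requires a careful case analysis guided by the rearrangement lemmas, and may well need a strengthened inductive hypothesis that simultaneously tracks the distinguishing arrows $x_i,y_j,z_j$ across the recursion steps so that the exclusivity conditions $\sum_{i'} \deg_{x_i}(b_{i'})=1$ and $\deg_{y_j}(h)=\deg_{z_j}(h)=2$ are preserved at every stage.
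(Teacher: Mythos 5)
There is a genuine gap, and it sits exactly where you flag it: the extraction step. In the paper the existence of the decomposition together with the distinguished arrows satisfying~\Ref{eq_pA291_1} and~\Ref{eq_pA291_new0} is not reproved at all --- it is quoted from Lemma~6.10 of~\cite{Lopatin_Comm3} --- and the entire content of the present proof is the inequality $r+t\leq d-n+1$. That inequality is obtained not by induction on the cyclomatic number but by a global counting argument: after reducing to $\Q=\Q_{\mdeg(h)}$ via Lemma~\ref{lemma_subquiver}, one assembles the union $\QG$ of the cycles $b_1,\ldots,b_r$ one cycle at a time inside each strongly connected component, records an ``entry arrow'' for each newly attached cycle to get $\#\Arr{\QG}\geq\#\Ver{\QG}+(r-l)$, and then attaches the $c_j$ one at a time using~\Ref{eq_pA291_1} to produce further fresh arrows, yielding $\#\Arr{\Q}\setminus\Arr{\QG}\geq\#\Ver{\Q}\setminus\Ver{\QG}+(t+l-1)$; adding the two estimates gives the bound. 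So the bound is extracted from the already-constructed decomposition, not from a step-by-step peeling of cycles.

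Your peeling scheme, as described, does not deliver the lemma. First, choosing $a$ of minimal multiplicity and splitting on the parity of $\deg_a(h)$ does not produce the required distinguished arrows: if $\deg_a(h)$ is odd but $\geq 3$, the arrow $a$ need not satisfy~\Ref{eq_pA291_new0} for the cycle you extract (that condition is a precise identity involving all the $c_k$, not mere oddness), and if $\deg_a(h)$ is even but $\geq 4$, the cycle $b$ need not contain \emph{any} arrow of multiplicity exactly $2$ in $h$, so $y_1,z_1$ as in~\Ref{eq_pA291_1} may not exist on that cycle. Second, the claim that the cyclomatic number strictly decreases after one extraction is false in general: subtracting $\mdeg(b)$ (or $2\mdeg(b)$) from $\mdeg(h)$ need not kill any arrow of the support, so the induction does not advance, and consequently the count ``one cycle per unit of cyclomatic number'' --- which is what you need for $r+t\leq d-n+1$ --- is not justified. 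Third, even when the support does shrink, the residual flow need not come from a single closed path in a strongly connected support, and you give no argument that a residual closed path $h'$ with $h'\not\equiv0$ exists (this non-vanishing of the residual is precisely the delicate point that forces the careful equivalence manipulations in~\cite{Lopatin_Comm3}). In short, the base case and the reduction to the support are fine, but the inductive step is the whole theorem and remains unproved.
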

\begin{proof} The statement of the lemma but the inequality $r+t\leq d-n+1$ follows from Lemma~6.10~\cite{Lopatin_Comm3}. Applying
Lemma~\ref{lemma_subquiver}, we can assume that $\Q=\Q_{\mdeg{h}}$.

Denote by $\QG$ the quiver that is the union of closed paths $b_1,\ldots,b_r$, i.e.,
$\Ver{\QG}=\Ver{b_1}\cup\cdots\cup\Ver{b_r}$ and
$\Arr{\QG}=\Arr{b_1}\cup\cdots\cup\Arr{b_r}$. Let $\QG_1,\ldots,\QG_l$ be the strongly 
connected components of $\QG$. We have $\Arr{\QG_k}=\bigcup_{i\in I_k}\Arr{b_i}$ for some
$I_k\subset [1,r]$  and denote $\#I_k=r_k$ ($1\leq k\leq l$).

We assume that $k=1$. Consider an $i_1\in I_1$ and let $\Q_1$ be the quiver such that
$\Ver{\Q_1}=\Ver{b_{i_1}}$ and $\Arr{\Q_1}=\Arr{b_{i_1}}$. If $\#I_1>1$, then there is an
$i_2\in I_1\backslash\{i_1\}$ satisfying $\Ver{b_{i_2}}\cap\Ver{\Q_1}\neq\emptyset$. By
part~a), we have $x\not\in \Arr{\Q_1}$ for some $x\in\Arr{b_{i_2}}$. Hence there is an
$e_2\in\Arr{b_{i_2}}$ such that $e_2\not\in \Arr{\Q_1}$ and $e'_2\in\Ver{\Q_1}$. We add
the closed path $b_{i_2}$ to $\Q_1$ and obtain a new quiver $\Q_2$, i.e.,
$\Ver{\Q_2}=\Ver{\Q_1}\cup \Ver{b_{i_2}}$ and $\Arr{\Q_2}=\Arr{\Q_1}\cup \Arr{b_{i_2}}$.
Then we repeat this procedure for $\Q_2$ and so on. Finally, we obtain
$\Q_1,\Q_2,\ldots,\Q_{r_1}=\QG_1$ and pairwise different arrows $e_2,\ldots,e_{r_1}$ such
that $e_j\in \Arr{\Q_j}\backslash\Arr{\Q_{j-1}}$ and $e'_j\in\Ver{\Q_{j-1}}$ for any
$2\leq j\leq r_1$. Then for the set $V_1=\{e'_2,\ldots,e'_{r_1}\}$ we have
$\#\{a\in\Arr{\QG_1}|\,a'\in V_1\}\geq \#V_1+(r_1-1)$. Since for every
$v\in\Ver{\QG_1}\backslash V_1$ there is at least one arrow $a\in\Arr{\QG_1}$ with
$a'=v$, we have
$$\#\Arr{\QG_1}\geq \#\Ver{\QG_1}+(r_1-1).$$
The similar formula holds for all $k$. It follows that%
\begin{eq}\label{eq_pA293}
\#\Arr{\QG}\geq \#\Ver{\QG}+(r-l).
\end{eq}
\indent For the quiver $\Q_r=\QG$ there is a $j_1\in[1,t]$ satisfying
$\Ver{c_{j_1}}\cap\Ver{\Q_r}\neq\emptyset$. We add $c_{j_1}$ to $\Q_r$ and denote the
resulting quiver by $\Q_{r+1}$. By~\Ref{eq_pA291_1}, there exists a $g_1\in\Arr{c_{j_1}}$
such that $g_1\not\in \Arr{\Q_r}$ and $g'_1\in\Ver{\Q_r}$. Moreover, if the number of
strongly connected components of $\Q_{r+1}$ is less than the number of strongly connected
components of $\Q_r$, then there also exists a $g_2\in\Arr{c_{j_1}}\backslash\{g_1\}$
such that  $g_2\not\in \Arr{\Q_r}$ and $g'_2\in\Ver{\Q_r}$. We repeat this procedure for
$\Q_{r+1}$ and so on. Finally, we obtain quivers $\Q_r,\Q_{r+1},\ldots,\Q_{r+t}=\Q$ and
pairwise different arrows $g_1,\ldots,g_{t+l-1}$ of $\Q$ such that for the set
$V=\{g'_1,\ldots,g'_{t+l-1}\}$ we have
$$\#\{a\in \Arr{\Q}\backslash\Arr{\QG}|\,a'\in V\}\geq \#V\backslash\!\Ver{\QG}+(t+l-1).$$
Therefore
$$\#\Arr{\Q}\backslash\Arr{\QG}\geq \#\Ver{\Q}\backslash\Ver{\QG}+(t+l-1)$$
and~\Ref{eq_pA293} completes the proof.
\end{proof}

\section{Sets of multidegrees}\label{section_part4}  

Suppose $\Q$ is a strongly connected quiver and $\Char(K)=2$. 

The {\it support} of a non-zero vector $\un{\de}\in\NN^{\#\Arr{\Q}}$ with respect to $\Q$ is the subquiver
$\Q_{\un{\de}}$ of $\Q$ such that $\Arr{\Q_{\un{\de}}}=\{a\in\Arr{\Q}\,|\,\de_a\geq1\}$ and
$\Ver{\Q_{\un{\de}}}=\{a',a''\,|\,a\in \Arr{\Q_{\un{\de}}}\}$. The following remark is extensively applied to established indecomposability of invariants.

\begin{remark}\label{remark_inclusions_new} 
Let $h$ be a closed path in $\Q$. If for any $\mdeg(h)$-double path $a$ we have that the support of $\mdeg(h)-2\mdeg(a)$ is not strongly connected (and is not empty), then $h\not\equiv0$. 
\end{remark}
\begin{proof} 
If $h$ satisfies the condition of the lemma and $h\equiv0$, then 
$h\equiv a^2f$ for some paths $a,f$. Thus the support of $\mdeg(h)-2\mdeg(a)=\mdeg(f)$ is strongly connected; a contradiction.
\end{proof}

For a non-zero vector $\un{\de}\in\NN^{\#\Arr{\Q}}$ we say that 
\begin{enumerate}
\item[$\bullet$] $\un{\de}$ is {\it indecomposable} (with respect to $\Q$) if its support is strongly connected; 

\item[$\bullet$] $\un{\de}$ is {\it decomposable} (with respect to $\Q$) if its support is not strongly connected but is the disjoint union of strongly connected quivers. 
\end{enumerate}%
Observe that $\un{\de}$ can be neither decomposable nor indecomposable. We say that $\un{\de}=\un{\de}^{(1)}+\cdots+\un{\de}^{(r)}$ is the {\it decomposition} of
$\un{\de}$ with respect to $\Q$ if $\un{\de}^{(1)},\ldots,\un{\de}^{(r)}\in
\NN^{\#\Arr{\Q}}$ are non-zero vectors and
$\Q_{\un{\de}^{(1)}},\ldots,\Q_{\un{\de}^{(r)}}$ are pairwise different strongly 
connected components of $\Q_{\un{\de}}$. Obviously,  if $\un{\de}$ is indecomposable, then $r=1$; and if $\un{\de}$ is decomposable, then $r\geq2$. Introduce the following sets:

\begin{enumerate}
\item[a)] the set $\OmegaNull(\Q)$ consists of all $\mdeg(h)$, where $h$ ranges over
closed paths in $\Q$ with $\Arr{h}=\Arr{\Q}$;


\item[b)] the set $\OmegaZwei(\Q)$ consists of such $\un{\de}\in\OmegaNull(\Q)$ that for
every $\un{\de}$-double path $a$ in $\Q$ we have $\un{\de}-2\mdeg(a)$ is decomposable
with respect to $\Q$;

\item[c)] the set $\OmegaDrei(\Q)$ consists of such $\un{\de}\in\OmegaNull(\Q)$ that
there is no $\un{\de}$-double path in $\Q$;

\item[d)] the set $\OmegaMy(\Q)$ consists of such $\mdeg(h)\in\OmegaNull(\Q)$ that $h$ is
a closed path in $\Q$ and $h\not\equiv0$.
\end{enumerate}

For every vector $\un{\de}\in\OmegaNull(\Q)$ there exists its decomposition with respect
to $\Q$ that consists of one summand. Moreover, by Lemma~\ref{lemma_mdeg}, for every
$\un{\theta}\in\OmegaNull(\Q)$ with $\un{\de}-\un{\theta}\geq0$ there also exists a
decomposition of $\un{\de}-\un{\theta}$ with respect to $\Q$.

\begin{remark}\label{remark_inclusions} 
We have the following inclusions: $\OmegaDrei(\Q)\subset \OmegaZwei(\Q)\subset
\OmegaMy(\Q)\subset \OmegaNull(\Q)$.
\end{remark}
\begin{proof} The inclusion $\OmegaZwei(\Q)\subset \OmegaMy(\Q)$ follows from Remark~\ref{remark_inclusions_new}. The remaining inclusions
are trivial.
\end{proof}

\begin{example}\label{ex_proper} Let $h_1=czczxyba$, $h_2=czcbyzxa$ be closed paths in the quiver $\Q$
$$ \vcenter{
\xymatrix@C=1.3cm@R=1.3cm{ %
&\vtx{v} %
\ar@/^/@{->}[ld]_{a} \ar@/_/@{<-}[ld]_{x}
\ar@/^/@{->}[rd]^{y} \ar@/_/@{<-}[rd]^{b}& \\
\vtx{u} \ar@/^/@{->}[rr]^{c} \ar@/_/@{<-}[rr]^{z} &&\vtx{w}  }}
$$
Then $h_1\equiv0$, $h_2\not\equiv0$, and
$\mdeg(h_1)=\mdeg(h_2)\in\OmegaMy(\Q)\backslash\OmegaZwei(\Q)$.
\end{example}

\begin{lemma}\label{lemma_part4_main0}
If $\Q\in\Q(n,d,m)$ and $\un{\de}\in\OmegaDrei(\Q)$, then %
$|\un{\de}|\leq m(d-n+1)$.
\end{lemma}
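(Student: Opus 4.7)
The plan is to chain the definitions together with Lemma~\ref{lemma_pA291}. Since $\un{\de}\in\OmegaDrei(\Q)$, the inclusion $\OmegaDrei(\Q)\subset\OmegaMy(\Q)$ of Remark~\ref{remark_inclusions} supplies a closed path $h$ in $\Q$ with $\mdeg(h)=\un{\de}$, $\Arr{h}=\Arr{\Q}$, and $h\not\equiv0$. This is the only place where the assumption that $\un{\de}$ lies in $\OmegaDrei$ (rather than merely $\OmegaNull$) is used to produce a non-trivial representative.

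Apply Lemma~\ref{lemma_pA291} to this $h$. It yields pairwise different primitive closed paths $b_1,\ldots,b_r,c_1,\ldots,c_t$ in $\Q$, together with distinguished arrows, such that
$$\mdeg(h)=\sum_{i=1}^{r}\mdeg(b_i)+2\sum_{k=1}^{t}\mdeg(c_k)\quad\text{and}\quad r+t\leq d-n+1.$$
The key observation is that every $c_k$ is automatically a $\un{\de}$-double path: indeed, for each $a\in\Arr{c_k}$ we have $\de_a=\deg_a(h)\geq 2\deg_a(c_k)\geq 2$, and $c_k$ is primitive closed by construction. Since $\un{\de}\in\OmegaDrei(\Q)$ forbids $\un{\de}$-double paths, we must have $t=0$.

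Therefore $\mdeg(h)=\sum_{i=1}^{r}\mdeg(b_i)$ with $r\leq d-n+1$. Each $b_i$ is a primitive closed path in $\Q\in\Q(n,d,m)$, so $\deg(b_i)\leq m(\Q)=m$. Taking $|\cdot|$ on both sides,
$$|\un{\de}|=\sum_{i=1}^{r}\deg(b_i)\leq rm\leq m(d-n+1),$$
which is the desired bound. There is no real obstacle once one notices that the doubled summands in Lemma~\ref{lemma_pA291} are $\un{\de}$-double paths, which is precisely what the definition of $\OmegaDrei$ was designed to rule out.
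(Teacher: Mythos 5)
Your proof is correct and follows essentially the same route as the paper: produce a closed path $h$ with $\mdeg(h)=\un{\de}$ and $h\not\equiv0$, apply Lemma~\ref{lemma_pA291}, and conclude $t=0$ because the doubled summands $c_k$ would be $\un{\de}$-double paths, which $\OmegaDrei(\Q)$ excludes. The paper states the step ``since $t=0$'' without elaboration; your justification of it is exactly the intended one.
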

\begin{proof}
By definition, $\un{\de}=\mdeg(h)$ for some closed path $h$ in $\Q$. The definition of
$\OmegaDrei(\Q)$ shows that $h\not\equiv0$. Then Lemma~\ref{lemma_pA291} implies
$\deg(h)\leq m(r+2t)$ and $r+t\leq d-n+1$. Since $t=0$, the proof is completed.
\end{proof}

\bigskip
\noindent{\bf Definition (of a $\un{\de}$-complete chain).}  A {\it chain of paths}
$A=(a_1,\ldots,a_t)$ is an ordered sequence of primitive closed paths satisfying
$\Ver{a_i}\cap \Ver{a_j}=\emptyset$, if $|i-j|>1$; and $\Ver{a_i}\cap
\Ver{a_j}\neq\emptyset$, otherwise. Given $\un{\de}\in\OmegaZwei(\Q)$, the chain of paths
$A$ is called {\it $\un{\de}$-complete} if the following holds.
\begin{enumerate}
\item[1.] The paths $a_1,\ldots,a_t$ are $\un{\de}$-double paths.

\item[2.] For $\un{\theta}=\un{\de}-2\sum_{i=1}^t\mdeg(a_i)$ we have $\un{\theta}\geq0$
and $|\un{\theta}|>0$.

\item[3.] There is a (unique) decomposition
$\un{\theta}=\un{\theta}^{(1)}+\cdots+\un{\theta}^{(r)}$ with respect to $\Q$ and this
decomposition satisfies
\begin{enumerate}
\item[a)] $r\geq2$ and $\un{\theta}^{(i)}\in\OmegaZwei(\Q_{\un{\theta}^{(i)}})$ for all
$i$;

\item[b)] if $t\geq2$, then $r=2$ and %
we have $\Ver{\Q_{\un{\theta}^{(i)}}} \cap \Ver{a_j}\neq\emptyset$ iff $i=j=1$ or
$i=2,\,j=t$.
\end{enumerate}
\end{enumerate}
If there is no $\un{\de}$-double path in $\Q$, then $A=\emptyset$ is called a {\it
$\un{\de}$-complete} chain. Schematically a $\un{\de}$-complete chain $A$ is depicted on
Figure~\ref{pic1} for $t=1$ and on Figure~\ref{pic2} for $t\geq2$, where circles stand
for closed paths and rectangles stand for subquivers of $\Q$:
$$
\begin{picture}(100,110)
\put(-80,70){
\put(0,0){\circle{30}\put(-3,-3){$a_1$}}%
\put(25,20){\rectangle{20}{15}\put(-5,-4){$\Q_{\un{\theta}^{(1)}}$}}%
\put(-20,-25){\rectangle{15}{20}\put(-11,-10){$\Q_{\un{\theta}^{(r)}}$}}%
\put(20,-20){\circle*{2}}%
\put(25,-15){\circle*{2}}%
\put(15,-25){\circle*{2}}%
\put(-10,-65){\pic\label{pic1}}%
}%
\put(130,70){
\put(0,0){\circle*{2}}%
\put(7,0){\circle*{2}}%
\put(-7,0){\circle*{2}}%
\put(-30,0){\circle{30}\put(-3,-3){$a_1$}}%
\put(30,0){\circle{30}\put(-3,-3){$a_t$}}%
\put(-60,0){\rectangle{20}{15}\put(-15,-4){$\Q_{\un{\theta}^{(1)}}$}}%
\put(60,0){\rectangle{20}{15}\put(-5,-4){$\Q_{\un{\theta}^{(2)}}$}}%
\put(-20,-65){\pic\label{pic2}}%
}%
\end{picture}
$$

\begin{lemma}\label{lemma_complete_chain}
For every $\un{\de}\in\OmegaZwei(\Q)$ there exists a $\un{\de}$-complete chain
$A=(a_1,\ldots,a_t)$.
\end{lemma}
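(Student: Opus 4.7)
The plan is to proceed by induction on $|\un{\de}|$. For the base case, if $\Q$ admits no $\un{\de}$-double path, then the empty chain $A = \emptyset$ vacuously satisfies the definition.

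For the inductive step, fix any $\un{\de}$-double path $a_1$ and set $\un{\theta} := \un{\de} - 2\mdeg(a_1)$. First, I would verify that $|\un{\theta}| > 0$: if $\un{\theta} = 0$ then $\un{\de} = 2\mdeg(a_1)$, so $\Arr{\Q_{\un{\de}}} = \Arr{a_1}$; combined with $\Arr{\Q} = \Arr{\Q_{\un{\de}}}$ (which follows from $\un{\de} \in \OmegaNull(\Q)$), this would force $\Q$ to be the single primitive cycle $a_1$, whose only closed path of multidegree $\un{\de}$ is $a_1^2 \equiv 0$, contradicting $\un{\de} \in \OmegaMy(\Q) \supseteq \OmegaZwei(\Q)$. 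With $|\un{\theta}| > 0$, the definition of $\OmegaZwei(\Q)$ forces $\un{\theta}$ to be decomposable; write $\un{\theta} = \un{\theta}^{(1)} + \cdots + \un{\theta}^{(r)}$ with $r \geq 2$. A short check---the in-out balance descends from $\un{\de}$ to $\un{\theta}$ (subtracting a closed-path multidegree preserves it) and then splits across the vertex-disjoint strongly connected components, so Lemma~\ref{lemma_mdeg} applies---shows $\un{\theta}^{(i)} \in \OmegaNull(\Q_{\un{\theta}^{(i)}})$ for every $i$.

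The construction is now completed by a case split. If every $\un{\theta}^{(i)}$ additionally lies in $\OmegaZwei(\Q_{\un{\theta}^{(i)}})$, then $A = (a_1)$ is already $\un{\de}$-complete: conditions 1, 2 and 3(a) are immediate, and 3(b) is vacuous for $t = 1$. Otherwise some $\un{\theta}^{(i_0)} \notin \OmegaZwei(\Q_{\un{\theta}^{(i_0)}})$, witnessed by a $\un{\theta}^{(i_0)}$-double path $b$ with $\un{\theta}^{(i_0)} - 2\mdeg(b)$ not decomposable in $\Q_{\un{\theta}^{(i_0)}}$. Since $\Q_{\un{\de}}$ is strongly connected while $\Q_{\un{\theta}^{(i_0)}}$ arises as a strongly connected component of $\Q_{\un{\theta}}$ after deleting arrows of $a_1$, the subquiver $\Q_{\un{\theta}^{(i_0)}}$ must share a vertex with $a_1$. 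I would then pick a $\un{\theta}^{(i_0)}$-double path $a_2$ (close to that attachment vertex) sharing a vertex with $a_1$, and apply the inductive hypothesis to the strictly smaller vector $\un{\de} - 2\mdeg(a_1) - 2\mdeg(a_2)$ to supply the tail $(a_2, \ldots, a_t)$ of the chain.

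The main obstacle will be enforcing condition 3(b) once $t \geq 2$: the final leftover must split into exactly two strongly connected pieces attached only at the extreme ends $a_1$ and $a_t$, never at interior vertices of the chain. Guaranteeing this requires the chosen $a_2$ (and each subsequent $a_{i+1}$) to simultaneously eliminate the bad component, avoid creating new interior attachments, and merge all other remnants into exactly two terminal ones. It is precisely this transition from the flexible $t = 1$ regime (arbitrary $r \geq 2$, components may attach anywhere on $a_1$) to the rigid $t \geq 2$ regime (exactly $r = 2$, components confined to the chain's ends) that forms the technical heart of the argument, and it is where the careful selection of $a_2$ inside the failing component $\Q_{\un{\theta}^{(i_0)}}$ must be done with respect to the attachment geometry rather than merely by existence.
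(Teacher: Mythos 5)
Your skeleton (empty chain when there is no $\un{\de}$-double path; otherwise pick $a_1$, decompose the leftover, and split on whether each component lies in $\OmegaZwei$ of its support) matches the paper's argument up to the point you yourself flag as unresolved --- and that point is the actual content of the lemma, so the proposal has a genuine gap. Moreover, the route you sketch for closing it, namely choosing $a_2$ ``close to the attachment vertex'' and ``with respect to the attachment geometry,'' points in the wrong direction. The paper's key observation is that no careful selection is needed: take $a_2$ to be \emph{any} witness to $\un{\de}^{(2)}\not\in\OmegaZwei(\Q_{\un{\de}^{(2)}})$, i.e.\ any $\un{\de}^{(2)}$-double path for which $\un{\theta}=\un{\de}^{(2)}-2\mdeg(a_2)$ is indecomposable. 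Since $\un{\de}\geq\un{\de}^{(2)}$, this $a_2$ is automatically a $\un{\de}$-double path in the ambient quiver $\Q$, so the defining property of $\OmegaZwei(\Q)$ applies to $a_2$ itself and forces $\un{\de}-2\mdeg(a_2)$ to be decomposable. Comparing this with the indecomposability of $\un{\theta}$ inside $\Q_{\un{\de}^{(2)}}$, and with the fact that every component $\Q_{\un{\de}^{(i)}}$ must meet $a_1$ (else $\Q=\Q_{\un{\de}}$ would not be strongly connected), yields $\Ver{a_1}\cap\Ver{a_2}\neq\emptyset$ and $\Ver{a_1}\cap\Ver{\Q_{\un{\theta}}}=\emptyset$ for free. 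This is precisely the attachment geometry that condition 3(b) demands, and it propagates unchanged when the procedure is iterated at the two ends of the chain.

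You also do not address the reduction to $r=2$ once $t\geq2$. The paper handles it by re-rooting: if $r\geq3$, replace $a_1$ by $a_2$ as the head of the chain; then $\un{\de}-2\mdeg(a_2)=\un{\theta}'+\un{\theta}$ with $\un{\theta}'=\un{\de}^{(1)}+\un{\de}^{(3)}+\cdots+\un{\de}^{(r)}+2\mdeg(a_1)$ indecomposable (all of those pieces hang on $a_1$), so the new decomposition has exactly two summands. Finally, your plan to invoke the inductive hypothesis on the single vector $\un{\de}-2\mdeg(a_1)-2\mdeg(a_2)$ is not well posed: that vector is decomposable, the recursion must be run separately on each of its two components $\un{\theta}^{(1)},\un{\theta}^{(2)}$ (exactly as $\un{\de}^{(2)}$ was treated), and the resulting extensions must be appended at the two ends of the already-built chain rather than produced as an independent chain. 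None of these steps is hard once the observation in the previous paragraph is in hand, but as written the proposal stops exactly where the proof begins.
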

\begin{proof} If there is no $\un{\de}$-double path in $\Q$, then
$A=\emptyset$ is a $\un{\de}$-complete chain; otherwise, let $a_1$ be a $\un{\de}$-double
path in $\Q$. Consider the decomposition
$\un{\de}-2\mdeg(a_1)=\un{\de}^{(1)}+\cdots+\un{\de}^{(r)}$ with respect to $\Q$. Since
$\un{\de}\in\OmegaZwei(\Q)$, we have $r\geq2$. If
$\un{\de}^{(i)}\in\OmegaZwei(\Q_{\un{\de}^{(i)}})$ for all $i$, then $A=\{a_1\}$ is a
$\un{\de}$-complete chain. Thus without loss of generality we can assume that
$\un{\de}^{(2)}\not\in\OmegaZwei(\Q_{\un{\de}^{(2)}})$, i.e., there exists a
$\un{\de}^{(2)}$-double path $a_2$ such that $\un{\theta}=\un{\de}^{(2)}-2\mdeg(a_2)$ is
indecomposable. But $\un{\de}-2\mdeg(a_2)$ is decomposable, since
$\un{\de}\in\OmegaZwei(\Q)$. Hence we obtain $\Ver{a_1}\cap \Ver{a_2}\neq\emptyset$ and
$\Ver{a_1}\cap \Ver{\Q_{\un{\theta}}}=\emptyset$ (see the picture).
$$
\begin{picture}(0,100)
\put(0,-15){%
\put(-115,70){
\put(0,0){\circle{30}\put(-3,-3){$a_1$}}%
\put(25,20){\circle{20}\put(-4,-3){$a_2$}}%
\put(-25,20){\rectangle{20}{15}\put(-13,-4){$\Q_{\un{\de}^{(1)}}$}}%
\put(45,20){\rectangle{40}{15}\put(15,-4){$\Q_{\un{\de}^{(2)}}$}}%
\put(-20,-25){\rectangle{15}{20}\put(-11,-10){$\Q_{\un{\de}^{(r)}}$}}%
\put(20,-20){\circle*{2}}%
\put(25,-15){\circle*{2}}%
\put(15,-25){\circle*{2}}%
}%
\put(85,70){
\put(0,0){\circle{30}\put(-3,-3){$a_1$}}%
\put(18,20){\circle{30}\put(-4,-3){$a_2$}}%
\put(-25,20){\rectangle{20}{15}\put(-13,-4){$\Q_{\un{\de}^{(1)}}$}}%
\put(48,20){\rectangle{20}{15}\put(2,-4){$\Q_{\un{\theta}}$}}%
\put(-20,-25){\rectangle{15}{20}\put(-11,-10){$\Q_{\un{\de}^{(r)}}$}}%
\put(20,-20){\circle*{2}}%
\put(25,-15){\circle*{2}}%
\put(15,-25){\circle*{2}}%
}%
\put(0,67){$\Rightarrow$}%
\put(160,65){.}%
}%
\end{picture}
$$

If $r\geq3$, then we consider $a_2$ instead of $a_1$ and obtain that %
$\un{\de}-2\mdeg(a_2)=\un{\theta}'+\un{\theta}$ is the decomposition of
$\un{\de}-2\mdeg(a_2)$, where
$\un{\theta}'=\un{\de}^{(1)}+\un{\de}^{(3)}+\cdots+\un{\de}^{(r)}+2\mdeg(a_1)$ is
indecomposable. Thus without loss of generality we can assume that $r=2$.

We have the decomposition
$\un{\de}-2\mdeg(a_1)-2\mdeg(a_2)=\un{\theta}^{(1)}+\un{\theta}^{(2)}$, where
$\un{\theta}^{(1)}=\un{\de}^{(1)}$ and $\un{\theta}^{(2)}=\un{\theta}$.
Then we consider $\un{\theta}^{(1)}$ and $\un{\theta}^{(2)}$ in the same way as we has
considered $\un{\de}^{(2)}$; and so on. Finally, we obtain a $\un{\de}$-complete chain.
\end{proof}

\bigskip
\noindent{\bf Definition (of a $\un{\de}$-tree).} For $\un{\de}\in\NN^{\#\Arr{\Q}}$ a
triple $(\QT,\un{\de}^{(v)},A_v\,|\,v\in\Ver{\QT})$ is called a {\it $\un{\de}$-tree} if
the following holds:
\begin{enumerate}
\item[1.] $\QT$ is an oriented rooted tree, i.e., there is no closed path in $\QT$, there
is a unique $v_0\in\Ver{\QT}$ with $a'\neq v_0$ for all $a\in\Arr{\QT}$, and for each
other vertex $v$ of $\QT$ there is a unique $a\in\Arr{\QT}$ with $a'= v$. The vertex
$v_0$ is called the {\it root} and a vertex $v\in\Ver{\QT}$ with $v\neq a''$ for all
$a\in\Arr{\QT}$ is called a {\it leaf}.

\item[2.] Suppose $v\in \Ver{\QT}$, then
\begin{enumerate}
\item[a)] $\un{\de}^{(v)}\in\NN^{\#\Arr{\Q}}$ and $\un{\de}^{(v_0)}=\un{\de}$;

\item[b)] $A_v=(a_1,\ldots,a_t)$ is a $\un{\de}^{(v)}$-complete chain;

\item[c)] if $A_v\neq\emptyset$, then
$\un{\de}-2\sum_{i=1}^t\mdeg(a_i)=\un{\de}^{(b_1')}+\cdots+\un{\de}^{(b_r')}$ is the
decomposition with respect to $\Q$, where $b_1,\ldots,b_r$ are all arrows of $\QT$ whose
tails are equal to $v$; otherwise $v$ is a leaf.
\end{enumerate}
\end{enumerate}
In particular, the conditions that $v\in\Ver{\QT}$ is a leaf, $A_v=\emptyset$, and
$\un{\de}^{(v)}\in\OmegaDrei(\Q_{\un{\de}^{(v)}})$ are equivalent. Note that
$\#\Ver{\QT}=1$ iff $\un{\de}\in\OmegaDrei(\Q_{\un{\de}})$. By
Lemma~\ref{lemma_complete_chain}, there exists a $\un{\de}$-tree for every
$\un{\de}\in\OmegaZwei(\Q)$. Observe that for different $u,v\in\Ver{\QT}$ and closed
paths $a\in A_u$, $b\in A_v$ we have $a\neq b$.

\begin{lemma}\label{lemma_pA26_8}
Suppose $\un{\de}\in\OmegaZwei(\Q)\backslash\OmegaDrei(\Q)$ and
$(\QT,\un{\de}^{(v)},A_v\,|\,v\in\Ver{\QT})$ is a {\it $\un{\de}$-tree}. Denote
$l=\#\{v\in\Ver{\QT}\,|\,v\text{ is not a leaf}\,\}$ and define a set $A=\{a\,|\,a\in
A_v\text{ for some }v\in\Ver{\QT}\}$. Then there are pairwise different
$c_1,\ldots,c_{l_1}\in A$ such that $A\backslash \{c_1,\ldots,c_{l_1}\}=B_1\sqcup \cdots
\sqcup B_{l_2}$ is a disjoint union, where $B_1,\ldots,B_{l_2}$ are some chains of paths,
$0\leq l_1<l$, and $1\leq l_2\leq l$.
\end{lemma}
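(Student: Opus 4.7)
My approach is to observe that the conclusion of the lemma is witnessed by the trivial partition along the tree itself: take $l_1=0$ (no $c_j$'s are needed) and let the chains $B_i$ be exactly the chains $A_v$ attached to the non-leaf vertices of $\QT$. No cross-level merging of chains through common vertices is required.

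The first step is to check that $l\geq 1$, since the strict inequality $l_1<l$ requires this for the choice $l_1=0$ to be admissible. Because $\un{\de}\in\OmegaZwei(\Q)\backslash\OmegaDrei(\Q)$, the root $v_0$ of $\QT$ has $\un{\de}^{(v_0)}=\un{\de}\notin\OmegaDrei(\Q_{\un{\de}})$, and by the equivalence recorded immediately after the definition of a $\un{\de}$-tree (``$v$ is a leaf $\iff A_v=\emptyset \iff \un{\de}^{(v)}\in\OmegaDrei(\Q_{\un{\de}^{(v)}})$''), the vertex $v_0$ is not a leaf. Hence $l\geq 1$.

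The second step is to collect the two structural facts from the definitions that make this trivial partition legitimate. First, the remark closing the definition of a $\un{\de}$-tree states that for distinct $u,v\in\Ver{\QT}$ and closed paths $a\in A_u$, $b\in A_v$ one has $a\neq b$; thus the $A_v$'s are pairwise disjoint as sets of primitive closed paths. Second, for each non-leaf $v$ the set $A_v$ is a $\un{\de}^{(v)}$-complete chain, which by condition~1 of the complete-chain definition is in particular a chain of paths in the sense introduced immediately before Lemma~\ref{lemma_complete_chain}. Since leaves contribute nothing to $A$, we obtain $A=\bigsqcup_{v\text{ non-leaf}}A_v$ as a disjoint union of chains of paths.

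Putting these together, I enumerate the non-leaves of $\QT$ as $v_1,\ldots,v_l$, take $l_1=0$, $l_2=l$, and set $B_i=A_{v_i}$ for $1\leq i\leq l$. Then $A=B_1\sqcup\cdots\sqcup B_l$ is the required disjoint union, each $B_i$ is a chain of paths, and both $0\leq l_1<l$ and $1\leq l_2\leq l$ hold. There is no real obstacle here: the lemma is essentially a bookkeeping consequence of the structural features already built into the definition of a $\un{\de}$-tree, and the only point deserving attention is the use of the hypothesis $\un{\de}\notin\OmegaDrei(\Q)$ to secure $l\geq 1$, which is what Step~1 provides.
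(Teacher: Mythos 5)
Your argument establishes only the weakest possible reading of the statement, and in doing so it misses the actual content of the lemma. If ``disjoint union'' meant nothing more than disjointness of the $B_i$ as sets of closed paths, the lemma would indeed be vacuous (the $A_v$ are pairwise disjoint as sets by the observation following the definition of a $\un{\de}$-tree, and each is a chain), and a half-page constructive proof would be pointless. The way the lemma is invoked in the proof of Theorem~\ref{theo_part4} shows what is really required: there one sets $C_v=A_v\backslash\{c_1,\ldots,c_{l_1}\}$ and uses that no vertex of $\Q$ lies on two different chains $C_{v_1},C_{v_2}$, which is what yields $\sum_{v\in I_0}s_v\leq n$ and ultimately $\rho\leq n-m$. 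So the chains $B_1,\ldots,B_{l_2}$ must be pairwise \emph{vertex-disjoint} in $\Q$, and your choice $B_i=A_{v_i}$, $l_1=0$ does not achieve this. Concretely: if $v$ is a non-leaf with $A_v=(a_1,\ldots,a_t)$ and decomposition $\un{\theta}=\un{\theta}^{(1)}+\cdots+\un{\theta}^{(r)}$, condition~3(b) of the definition of a complete chain forces $\Ver{a_1}\cap\Ver{\Q_{\un{\theta}^{(1)}}}\neq\emptyset$, while the chain $A_u$ attached to the corresponding child $u$ consists of $\un{\de}^{(u)}$-double paths lying inside $\Q_{\un{\de}^{(u)}}=\Q_{\un{\theta}^{(1)}}$; a vertex of $\Ver{a_1}\cap\Ver{\Q_{\un{\theta}^{(1)}}}$ can therefore lie on a path of $A_u$ as well, so $A_v$ and $A_u$ need not be vertex-disjoint.

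This is exactly the problem the paper's proof is solving: whenever a child $b_j'$ of a non-leaf $v$ has $A_{b_j'}\neq\emptyset$, the endpoint path of $A_v$ that meets $\Q_{\un{\de}^{(b_j')}}$ (namely $a_1$ or $a_t$) is declared to be one of the $c_i$ and removed, which severs the only possible vertex contact between what remains of $A_v$ and every chain in the subtree below $b_j'$; the assignment $c_i\mapsto b_j'$ is an injection into the set of non-leaves that are not the root, whence $l_1\leq l-1<l$. Your Step~1 (that the root is not a leaf, so $l\geq1$) is correct and is a point the paper leaves implicit, but it is the only part of the intended statement your proof actually addresses; the selection of the $c_i$'s, which is the substance of the lemma, is absent.
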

\begin{proof} We assume that $i=1$.
Suppose $v\in\Ver{\QT}$ is not a leaf, $A_v=(a_1,\ldots,a_t)$, and $b_1,\ldots,b_r$ are
arrows of $\QT$ whose tails are equal to $v$. If $t=1$ and there is a $1\leq j\leq r$
such that $A_{b'_j}\neq\emptyset$, then we define $c_i=a_1$, assign $b'_j$ to $c_i$, and
increase $i$ by one.

If $t\geq2$, then $r=2$ by the definition of a complete chain. If we also have
$A_{b'_1}\neq\emptyset$, then we define $c_i=a_1$, assign $b'_1$ to $c_i$, and increase
$i$ by one. If $A_{b'_2}\neq\emptyset$, then we define $c_i=a_t$, assign $b'_2$ to $c_i$,
and increase $i$ by one.

Repeat this procedure for all vertices of $\QT$ that are not leaves and obtain a set of
pairwise different closed paths $C=\{c_1,\ldots,c_{l_1}\}$.
Since we have defined an
injection $C\to\{v\in\Ver{\QT}\,|\,v\text{ is neither a leaf nor the root}\}$, the
inequality $l_1< l$ holds. The claim of the lemma follows from the construction.
\end{proof}

\begin{lemma}\label{lemma_pA26_9}
Suppose $\Q\in\Q(n,d,m)$ and $A=(a_1,\ldots,a_t)$ is a chain of paths such that
for $\un{\de}=2\sum_{i=1}^t\mdeg(a_i)$ we have $\un{\de}\in\OmegaNull(\Q)$ and $t\geq1$.
Then $|\un{\de}|-mt-n_0\leq n$, where
$$
n_0=\left\{%
\begin{array}{rl}
0,& \text{if }t=1\\
\#\Ver{a_1}\cap\Ver{a_2},& \text{if }t=2\\
\#\Ver{a_2}\cup\cdots\cup\Ver{a_{t-1}},& \text{if }t\geq3\\
\end{array}
\right.%
$$
\end{lemma}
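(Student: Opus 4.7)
The plan is to translate the inequality into vertex-counting. Since each $a_i$ is a primitive closed path and $m=m(\Q)$, one has $\#\Ver{a_i}=\deg(a_i)\le m$, so that
$$|\un{\de}|\;=\;2\sum_{i=1}^t\deg(a_i)\;=\;2\sum_{i=1}^t\#\Ver{a_i}.$$
I will bound the sum $\sum_i\#\Ver{a_i}$ in two different ways and add.

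The first bound is $\sum_i\#\Ver{a_i}\le tm$, directly from $\#\Ver{a_i}\le m$. For the second, set $k_i=\#(\Ver{a_i}\cap\Ver{a_{i+1}})$ for $1\le i\le t-1$ and $U=\Ver{a_1}\cup\cdots\cup\Ver{a_t}\subseteq\Ver{\Q}$. The chain hypothesis $\Ver{a_i}\cap\Ver{a_j}=\emptyset$ whenever $|i-j|>1$ makes every triple (and higher) intersection in the inclusion--exclusion formula for $|U|$ vanish, leaving
$$\sum_{i=1}^t\#\Ver{a_i}\;=\;|U|+\sum_{i=1}^{t-1}k_i\;\le\;n+\sum_{i=1}^{t-1}k_i.$$
Adding the two bounds gives $|\un{\de}|\le tm+n+\sum_{i=1}^{t-1}k_i$, so the task reduces to verifying $\sum_{i=1}^{t-1}k_i\le n_0$ in each of the three cases.

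For $t=1$ the inner sum is empty and $n_0=0$; for $t=2$ it equals $k_1=n_0$ by definition. The only substantive case is $t\ge3$, where I intend to show that the sets $W_i:=\Ver{a_i}\cap\Ver{a_{i+1}}$, $1\le i\le t-1$, are pairwise disjoint and all contained in $\Ver{a_2}\cup\cdots\cup\Ver{a_{t-1}}$. Containment is immediate: $W_1\subseteq\Ver{a_2}$, $W_{t-1}\subseteq\Ver{a_{t-1}}$, and $W_i\subseteq\Ver{a_i}$ for $2\le i\le t-2$. Disjointness follows by contradiction: a common vertex $v\in W_i\cap W_j$ with $i<j$ would lie in $\Ver{a_i}\cap\Ver{a_j}$ (empty when $j\ge i+2$, since then $|i-j|\ge 2$) or, if $j=i+1$, in $\Ver{a_i}\cap\Ver{a_{i+2}}$ (again empty by the chain property). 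Hence $\sum_{i=1}^{t-1}k_i=|\bigcup_i W_i|\le n_0$, finishing the estimate.

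I do not anticipate any real obstacle; the hypothesis $\un{\de}\in\OmegaNull(\Q)$ is not actually needed for the argument, because $|U|\le n$ is automatic from $U\subseteq\Ver{\Q}$. The entire proof is a short application of inclusion--exclusion together with the chain disjointness condition.
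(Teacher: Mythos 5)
Your proof is correct and follows essentially the same route as the paper: the paper's argument for $t\geq2$ is precisely the rearrangement $|\un{\de}|-mt-n_0=\sum_i(\deg(a_i)-m)+\bigl(\tfrac{1}{2}|\un{\de}|-n_0\bigr)$ together with $\deg(a_i)\leq m$ and the asserted bound $\tfrac{1}{2}|\un{\de}|\leq n+n_0$, which is exactly your inclusion--exclusion estimate $\sum_i\#\Ver{a_i}\leq n+\sum_i k_i$ with $\sum_i k_i\leq n_0$. You supply the disjointness verification that the paper leaves implicit, and your uniform treatment of $t=1$ (the paper instead notes that $\deg(a_1)=n=m$ there, using $\un{\de}\in\OmegaNull(\Q)$) is a harmless simplification since only the inequality is claimed.
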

\begin{proof}
If $t=1$, then $\deg(a_1)=n$ and $m=n$. Thus $|\un{\de}|-mt-n_0=n$.

If $t\geq2$, then $\frac{1}{2}|\un{\de}|\leq n+n_0$. Therefore %
$|\un{\de}|-mt-n_0=\sum_{i=1}^t(\deg(a_i)-m)+(\frac{1}{2}|\un{\de}|-n_0)\leq n$, since
$\deg(a_i)\leq m$.
\end{proof}

\begin{lemma}\label{lemma_pA26_10}
Suppose $\Q\in\Q(n,d,m)$, $\un{\de}\in\OmegaZwei(\Q)$,
$A=(a_1,\ldots,a_t)\neq\emptyset$ is a $\un{\de}$-complete chain, and
$\un{\theta}=\un{\de}-2\sum_{i=1}^t\mdeg(a_i)$. Let
$\un{\theta}=\un{\theta}^{(1)}+\cdots+\un{\theta}^{(r)}$ be the decomposition with
respect to $\Q$. We define $k=n-\#\Ver{\Q_{\un{\theta}}}$ and assume that
$$|\un{\theta}^{(j)}|\leq m(d_j-n_j)+n_j+\rho_j$$ for any $1\leq j\leq r$, where
$d_j=\#\Arr{\Q_{\un{\theta}^{(j)}}}$, $n_j=\#\Ver{\Q_{\un{\theta}^{(j)}}}$, and
$\rho_j\in\ZZ$. Then
$$|\un{\de}|\leq m(d-n)+n+\sum_{j=1}^r \rho_j+\rho,$$
where $\rho=2\sum_{i=1}^t\deg(a_i)-m(t+1)-k$.
\end{lemma}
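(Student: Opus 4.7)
The plan is to reduce the lemma to an arrow-counting inequality about $\Q$ and the chain. From $\un{\theta}=\un{\de}-2\sum_{i=1}^t\mdeg(a_i)$ and the fact that the strongly connected components $\Q_{\un{\theta}^{(j)}}$ of $\Q_{\un{\theta}}$ are pairwise disjoint in both vertices and arrows (by decomposability of $\un{\theta}$), one obtains $|\un{\de}|=\sum_j|\un{\theta}^{(j)}|+2\sum_i\deg(a_i)$, $\sum_j n_j=n-k$, and $\sum_j d_j=\#\Arr{\Q_{\un{\theta}}}$. Substituting the hypothesis $|\un{\theta}^{(j)}|\leq m(d_j-n_j)+n_j+\rho_j$ and performing the routine algebra shows that the claimed bound $|\un{\de}|\leq m(d-n)+n+\sum_j\rho_j+\rho$ is equivalent to the combinatorial inequality
\[
\#\Arr{\Q}-\#\Arr{\Q_{\un{\theta}}}\;\geq\;t+1+k.
\]

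To count these ``missing'' arrows, observe that $\un{\de}\in\OmegaNull(\Q)$ forces $\de_a\geq 1$ for every $a\in\Arr{\Q}$, so any arrow with $\theta_a=0$ has $\de_a=2\sum_i\deg_a(a_i)\geq 1$ and must lie in $\bigcup_i\Arr{a_i}$. Form the contraction $\ov{\Q}$ whose vertex set consists of the $r$ components of $\Q_{\un{\theta}}$ (each collapsed to a point) together with the $k$ vertices of $\Ver{\Q}\setminus\Ver{\Q_{\un{\theta}}}$, and whose arrow set is exactly $\Arr{\Q}\setminus\Arr{\Q_{\un{\theta}}}$. Then $\ov{\Q}$ is strongly connected (since $\Q$ is) on $r+k$ vertices, each $a_i$ descends to a closed walk $\ov{a_i}$ in $\ov{\Q}$, and the primitiveness of $a_i$ ensures that distinct bridging arrows of $a_i$ remain distinct in $\ov{\Q}$.

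For $t=1$ the bound is immediate: the extra vertices all lie in $\Ver{a_1}$, since every incident arrow has $\theta=0$, and $a_1$ meets every component of $\Q_{\un{\theta}}$ by virtue of $r\geq 2$ in the complete-chain definition. Hence $\ov{a_1}$ is a closed walk covering all $r+k$ vertices of $\ov{\Q}$ and therefore uses at least $r+k\geq 2+k=t+1+k$ distinct arrows. For $t\geq 2$, the complete-chain condition forces $r=2$ and arranges the chain as a linear bridge linking $\Q_{\un{\theta}^{(1)}}$ to $\Q_{\un{\theta}^{(2)}}$ through $a_1,\ldots,a_t$; moreover every intersection $\Ver{a_i}\cap\Ver{a_{i+1}}$ lies in the extra-vertex set (thanks to $\Ver{a_i}\cap\Ver{\Q_{\un{\theta}}}=\emptyset$ for $1<i<t$, together with case analysis at the ends for $t=2$ and $i\in\{1,t-1\}$). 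A build-up from $\Q_{\un{\theta}^{(1)}}$ obtained by successively adjoining $a_1,\ldots,a_t,\Q_{\un{\theta}^{(2)}}$, combined with Lemma~\ref{lemma_subquiver} at each of the $t+1$ strongly-connected extensions, is then merged with a direct accounting of the $k$ extra vertices to produce the required count.

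The main obstacle is this final step when $t\geq 2$: a naive summation of Lemma~\ref{lemma_subquiver} increments only yields $d-d_1\geq(n-n_1)+(t+1)$, which falls short of the required bound by $d_2-n_2$ whenever $\Q_{\un{\theta}^{(2)}}$ has strictly more arrows than vertices. Closing the gap requires exploiting the arrow-disjointness of $\Q_{\un{\theta}^{(2)}}$ from the middle chain cycles (each middle $a_i$ avoids $\Ver{\Q_{\un{\theta}}}$ entirely, so its arrows are disjoint from $\Arr{\Q_{\un{\theta}^{(2)}}}$) together with the primitiveness of each $a_i$, which guarantees that every extra vertex shared between consecutive chain cycles contributes a fresh ``pass-through'' arrow not already counted elsewhere.
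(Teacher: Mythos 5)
Your reduction of the lemma to the combinatorial inequality $\#\Arr{\Q}-\#\Arr{\Q_{\un{\theta}}}\geq t+1+k$ is correct (together with $\sum_j n_j=n-k$ and $|\un{\de}|=\sum_j|\un{\theta}^{(j)}|+2\sum_i\deg(a_i)$ the algebra does yield the stated bound), and your argument for $t=1$ is sound and is essentially the count the paper uses. The genuine gap is the case $t\geq2$, where you switch to a build-up via Lemma~\ref{lemma_subquiver} and then, as you yourself note, come up short by $d_2-n_2$. The patch you sketch cannot close that gap: the ``fresh pass-through arrows'' at the intersections $\Ver{a_i}\cap\Ver{a_{i+1}}$ all lie in $\Arr{\Q}\setminus\Arr{\Q_{\un{\theta}}}$ (their heads are vertices outside $\Ver{\Q_{\un{\theta}}}$), whereas the deficit $d_2-n_2$ measures the excess of \emph{internal} arrows of $\Q_{\un{\theta}^{(2)}}$ over its vertices --- a quantity that can be arbitrarily large and has nothing to do with the number of chain intersections. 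Moreover, the arrow-disjointness you invoke fails at the end of the chain: $a_t$ meets $\Ver{\Q_{\un{\theta}^{(2)}}}$ and may share arrows with it, so adjoining $\Q_{\un{\theta}^{(2)}}$ as the last step does not contribute $d_2$ genuinely new arrows. As written, the $t\geq2$ case is not proved.

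The repair is to run for all $t$ the same head-counting argument you used for $t=1$, which is what the paper does. Let $\QG$ be the quiver with $\Ver{\QG}=\Ver{\Q}$ and $\Arr{\QG}=\Arr{\Q_{\un{\theta}}}$; its strongly connected components are the $r$ quivers $\Q_{\un{\theta}^{(j)}}$ together with $k$ isolated vertices. Since $\Q$ is strongly connected, each of these $k+r$ components receives at least one arrow of $\Arr{\Q}\setminus\Arr{\Q_{\un{\theta}}}$ whose head lies in it, and these arrows are pairwise distinct. In addition, for each $1\leq i\leq t-1$ every vertex of $\Ver{a_i}\cap\Ver{a_{i+1}}$ is an isolated component (by condition 3b of the complete-chain definition), and the primitivity of $a_i,a_{i+1}$ together with $a_i\neq a_{i+1}$ forces at least one arrow with head in $\Ver{a_i}\cap\Ver{a_{i+1}}$ beyond the one-per-vertex already counted. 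This gives $d-\#\Arr{\Q_{\un{\theta}}}\geq(k+r)+(t-1)\geq k+t+1$ directly, with no deficit to repair, because the internal arrows of the components are never routed through Lemma~\ref{lemma_subquiver} at all.
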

\begin{proof}
We define a quiver $\QG$ by $\Ver{\QG}=\Ver{\Q}$ and $\Ver{\QG}=\Ver{\Q_{\theta}}$. Let
$\QG_1,\ldots,\QG_{l}$ be all strongly connected components of $\QG$. Then $l=k+r$ and
for any $1\leq i\leq k+r$ there is an arrow $b$ in
$\Arr{\Q}\backslash\Arr{\Q_{\un{\theta}}}$ such that $b'\in\Ver{\QG_i}$. Moreover, for
any $1\leq i\leq t-1$ there are at least two arrows in
$\Arr{\Q}\backslash\Arr{\Q_{\un{\theta}}}$ whose heads are in
$\Ver{a_i}\cap\Ver{a_{i+1}}$ and every vertex from $\Ver{a_i}\cap\Ver{a_{i+1}}$ is a
strongly connected component of $\QG$. These two remarks imply that
$$d\geq \sum_{j=1}^r d_j+(k+r)+(t-1).$$
Since $r\geq2$, we have $\sum_{j=1}^r d_j \leq d-k-t-1$ and $\sum_{j=1}^r n_j=n-k$.
Clearly,
$$|\un{\de}|\leq m\sum_{j=1}^r d_j +(1-m) \sum_{j=1}^r n_j+\sum_{j=1}^r \rho_j+
2\sum_{i=1}^t\deg(a_i),$$ %
and the above formulas complete the proof.
\end{proof}

\begin{theo}\label{theo_part4}
Suppose $\Q\in\Q(n,d,m)$ is a quiver and $\un{\de}\in\OmegaZwei(\Q)$. Then %
$|\un{\de}|\leq m(d-n-1)+2n$.
\end{theo}
\begin{proof}
If $\un{\de}\in\OmegaDrei(\Q)$, then the required formula follows from
Lemma~\ref{lemma_part4_main0}.

Suppose $\un{\de}\not\in\OmegaDrei(\Q)$ and $(\QT,\un{\de}^{(v)},A_v\,|\,v\in\Ver{\QT})$
is a $\un{\de}$-tree. Define the set
$I=\{v\in\Ver{\QT}\,|\,v\text{ is not a leaf}\,\}$. For $v\in\Ver{\QT}$ denote %
$m_v=m(\Q_{\un{\de}^{(v)}})\leq m$, $n_v=\#\Ver{\Q_{\un{\de}^{(v)}}}$, and
$d_v=\#\Ver{\Q_{\un{\de}^{(v)}}}$. If $v\in\Ver{\QT}\backslash I$, then
$\un{\de}^{(v)}\in\OmegaDrei(\Q)$ and Lemma~\ref{lemma_part4_main0} together with the
inequalities $m_v\leq m\leq n$ and $n_v\leq d_v$  implies
\begin{eq}\label{eq1_theo_part4}
|\un{\de}^{(v)}|\leq m_v(d_v-n_v)+m_v\leq m(d_v-n_v)+n_v.
\end{eq}
For $v\in I$ let $A_v=(a_{v1},\ldots,a_{vt_v})$. We define
$\un{\theta}^{(v)}=\un{\de}^{(v)}-2\sum_{i=1}^{t_v}\mdeg(a_{vi})$ and
$k_v=n_v-\#\Ver{\Q_{\un{\theta}^{(v)}}}$.
By~\Ref{eq1_theo_part4}, we can apply Lemma~\ref{lemma_pA26_10} to all vertices of $I$
starting from elements of the set $\{v\in I\,|\,a'\text{ is a leaf for every
}a\in\Arr{\QT}\text{ with }a''=v\}$. Hence we obtain
$$|\un{\de}|\leq m(d-n)+n+\rho,$$
where $\rho=\sum_{v\in I}\rho_v$ and $\rho_v=2\sum_{i=1}^{t_v}\deg(a_{vi}) -
m(t_v+1)-k_v$.

We consider closed paths $c_1,\ldots,c_{l_1}$ from Lemma~\ref{lemma_pA26_8}, where
$l_1\leq\#I-1$. For every $v\in I$ we define $J_v\subset[1,t_v]$ by the equality
$C_v=A_v\backslash\{c_1,\ldots,c_{l_1}\}=\{a_{vi}\}_{i\in J_v}$ and denote $I_0=\{v\in
I\,|\,C_v\neq\emptyset\}$. Therefore,
$$\rho=\left(2\sum_{v\in I_0}\sum_{i\in J_v} \deg(a_{vi}) - m(t + \#I) - \sum_{v\in I} k_v\right) + %
2\sum_{i=1}^{l_1}\deg(c_i),
$$
where $t$ stands for $\sum_{v\in I}t_v=l_1 + \sum_{v\in I_0}\#J_v$. Since $\deg(c_i)\leq
m$ and $l_1-\#I\leq -1$, we have
$$\rho\leq \sum_{v\in I_0}\left(2\sum_{i\in J_v}\deg(a_{vi}) - m\#J_v - k_v\right) - m.$$
For all $v\in I_0$ define $n_{v0}$ for the chain of paths $C_v$ in the same way as we
have defined $n_0$ in Lemma~\ref{lemma_pA26_9} and let $s_v$ be the number of vertices in
$C_v$. Lemma~\ref{lemma_pA26_9} together with the inequality $-k_v\leq -n_{v0}$ implies %
$\rho\leq \sum_{v\in I_0}s_v - m$. Since there is no $u\in\Ver{\Q}$ that belongs to
$C_{v_1}$ and $C_{v_2}$ for different $v_1,v_2\in I_0$, we have $\sum_{v\in I_0}s_v\leq
n$ and $\rho\leq n-m$.
\end{proof}

\section{Properties of a closed path $h$ with $h\not\equiv0$}\label{section_part2}

In this section $\Q$ is a strongly connected quiver and $\Char(K)=2$. Let $a=a_1\cdots a_s$ be a primitive closed path in $\Q$ and $v_1=a_1'',\ldots,v_s=a_s''$, where $a_1,\ldots,a_s\in\Arr{\Q}$ and $s\geq2$. Suppose $h$ is a closed path in $\Q$.

\bigskip
\noindent{\bf Definition (of good subpaths).} A subpath $b$ in $h$ is called {\it good},
if
\begin{enumerate}
\item[a)] $b',b''\in\{v_1,\ldots,v_s\}$;

\item[b)] $\degII{v_i}{b}=0$ for all $i$;

\item[c)] $b\neq a_i$ for all $i$.
\end{enumerate}
Suppose $h\sim b_1g_1b_2g_2$, where $b_1,b_2$ are good subpaths in $h$ and $g_1,g_2$ are
paths in $\Q$. Then we say that $b_1$ and $b_2$ are {\it different} subpaths in $h$.

If we change part~$c)$ of the definition of a good path into
\begin{enumerate}
\item[c')] $b\neq a_i$ for every $i$ satisfying $\deg_{a_i}(h)\leq2$,
\end{enumerate}
then we obtain the definition of a {\it semi-good} subpath $b$ in $h$.

\bigskip
\noindent{\bf Definition (of good components).} A subset $I\subset\{v_1,\ldots,v_s\}$ is
called a {\it good component} with respect to $h$, if the following conditions are valid:
\begin{enumerate}
\item[a)] For every good subpath $b$ in $h$ we have $b'\in I$ if and only if $b''\in I$.

\item[b)] There is a good subpath $b$ in $h$ such that $b'\in I$.

\item[c)] The set $I$ is a minimal (by inclusion) subset of $\{v_1,\ldots,v_s\}$ that
satisfies~$a)$ and~$b)$.
\end{enumerate}
Taking semi-good subpaths instead of good subpaths in the above definition, we obtain the
definition of a {\it semi-good component}.

Let $I_1,\ldots,I_r$ be all good components with respect to $h$. Obviously,
\begin{eq}\label{eq_good_comp_dec}
\{v_1,\ldots,v_s\}=I_0\sqcup I_1\sqcup \cdots \sqcup I_r
\end{eq}
for some $I_0\subset \{v_1,\ldots,v_s\}$. Formula~\Ref{eq_good_comp_dec} is called the
{\it decomposition into good components} with respect to $h$ and $I_0$ is called the {\it
null component} with respect to $h$.

\smallskip
In what follows we assume that $\deg_{a_i}(h)=2$ for all $i$ unless otherwise stated.

\begin{lemma}\label{lemma_good_components}
1. For every good subpath $b$ in $h$ we have $b'\not\in I_0$ and $b''\not\in I_0$.

2. For all $u,w\in I_j$, where $j>0$, there are pairwise different good subpaths
$b_1,\ldots,b_l$ in $h$ such that $b_1\cdots b_l$ is a closed path in $\Q$, $b_1''=u$,
and $b_k'=w$ for some $k$ with $1\leq k\leq l$.
\end{lemma}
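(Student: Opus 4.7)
The plan is to recognize the good components $I_1,\ldots,I_r$ as the equivalence classes (containing at least one good-subpath endpoint) of the relation on $\{v_1,\ldots,v_s\}$ generated by $b'\sim b''$ over good subpaths $b$ of $h$, and then to deduce Part~2 by an Eulerian-circuit argument applied to a natural directed multigraph.

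For Part~1, conditions~a) and c) in the definition of good component force each $I_j$ with $j\ge 1$ to be precisely such a minimal equivalence class. Consequently $I_0$ consists of exactly those vertices of $\{v_1,\ldots,v_s\}$ that are endpoints of no good subpath of $h$. In particular, for every good subpath $b$ we have $b',b''\notin I_0$.

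For Part~2, I would form the directed multigraph $G_j$ whose vertices are the elements of $I_j$ and whose edges are the occurrences of good subpaths of $h$ with endpoints in $I_j$; by Part~1 this accounts for every good subpath in $h$ incident to $I_j$. Weak connectivity of $G_j$ is built into the equivalence-class description. The key step is the balance condition: decompose $h$ into its maximal segments between visits to $\{v_1,\ldots,v_s\}$, and observe that each such segment is either one of the arrows $a_i$ or a good subpath (its endpoints lie in $\{v_1,\ldots,v_s\}$ and its interior avoids this set). Using $\deg_{a_i}(h)=2$, the arrows $a_{i-1}$ and $a_i$ contribute exactly $2$ arrivals and $2$ departures at $v_i$; since $h$ is a closed path, total arrivals equal total departures at $v_i$, and so the number of good subpaths arriving at $v_i$ equals the number departing. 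Loop-type good subpaths with $b'=b''$ contribute equally to both counts and cause no problem.

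Minimality of $I_j$ rules out isolated vertices in $G_j$: if $v\in I_j$ had degree $0$, the proper subset $I_j\setminus\{v\}$ would still satisfy conditions a) and b). Hence $G_j$ is a weakly connected, balanced directed multigraph with no isolated vertices and therefore admits an Eulerian circuit. Rotating it to begin at $u$ yields the desired sequence $b_1,\ldots,b_l$: the edges are pairwise different occurrences in $h$ (each used exactly once), so the $b_i$ are pairwise different subpaths in $h$; we have $b_1''=u$ and $b_l'=u$; and since the circuit traverses every vertex of $I_j$, some $b_k'$ equals $w$. The only technical step that needs care is the inter-visit decomposition of $h$ together with the balance count; the Eulerian conclusion is then a standard fact.
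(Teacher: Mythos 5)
Your proof is correct and takes essentially the same route as the paper's: your directed multigraph of good-subpath occurrences over $I_j$ is exactly the support of $\un{\theta}=\mdeg(h)-2\mdeg(a)$ in the $h$-restriction of $\Q$ to $\{v_1,\ldots,v_s\}$, and your balance-plus-connectivity Eulerian argument is precisely the paper's appeal to Lemma~\ref{lemma_mdeg} applied to each $\un{\theta}^{(j)}$ and $\Q_{\un{\theta}^{(j)}}$. The paper merely leaves implicit the details you spell out (the inter-visit decomposition, the degree count from $\deg_{a_i}(h)=2$, and minimality excluding isolated vertices).
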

\begin{proof}
Part~1 follows from the definition. Let $\QG$ be the $h$-restriction of $\Q$ to the
vertices $v_1,\ldots,v_s$ (see Section~\ref{subsection_notations} for the definition). We consider $h$ as a path in $\QG$ and define
$\un{\theta}\in\NN^{\Arr{\QG}}$ as $\un{\theta}=\mdeg(h)-2\mdeg(a)$. Since
$\deg_{a_i}(h)=2$ for all $i$, it is not difficult to see that for the decomposition
$\un{\theta}=\un{\theta}^{(1)}+\cdots+\un{\theta}^{(r)}$ with respect to
$\Q_{\un{\theta}}$ we have $\Ver{\Q_{\un{\theta}^{(j)}}}=I_j$ for any $1\leq j\leq r$.
To conclude the proof, we apply Lemma~\ref{lemma_mdeg} to $\un{\theta}^{(j)}$ and $\Q_{\un{\theta}^{(j)}}$.
\end{proof}

\begin{lemma}\label{lemma_p14}
If $h\not\equiv a^2\!f$ for all $f\in\path(\Q)$, then the number of good components with
respect to $h$ is equal or greater than two.
\end{lemma}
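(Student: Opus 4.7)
The plan is to prove the contrapositive: assuming the number $r$ of good components with respect to $h$ is at most one, I will exhibit an $f\in\path(\Q)$ with $h\equiv a^2 f$.

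The case $r=0$ is immediate. Then $h$ has no good subpath, so $\Arr{h}\subseteq\{a_1,\ldots,a_s\}$. Combined with $\deg_{a_i}(h)=2$ for every $i$ and the primitivity of $a$ (which makes $a_j$ the unique arrow of $a$ leaving $v_j$), this forces the arrow sequence of $h$ to be a cyclic rotation of $a_1\cdots a_s\, a_1\cdots a_s$. The cyclic rotation rule $xy\equiv yx$ then gives $h\equiv a^2$, and one takes $f=1_{v_1}$.

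The main case is $r=1$. Let $I_1$ be the unique good component. Since $r\geq 1$, some arrow $b\in\Arr{h}$ lies outside $\{a_1,\ldots,a_s\}$, so Lemma~\ref{lemma_aaa} applied to the primitive closed path $a$ yields $h\equiv a\cdot f'$ for some $f'\in\path(\Q)$; here $f'$ is closed at $v_1$ and contains each $a_i$ exactly once. It then suffices to show $f'\equiv a\cdot g$ for some $g\in\path(\Q)$, whereupon $h\equiv a^2 g$. This is where the hypothesis $r=1$ enters. By Lemma~\ref{lemma_good_components}, the non-$a$ arrows of $f'$ all lie in detours whose endpoints form a single strongly-connected subset $I_1$ of $\{v_1,\ldots,v_s\}$, while each $v_j\in I_0$ is visited by $f'$ only in the pass-through pattern $a_{j-1}a_j$. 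Consequently, the arcs of $a$ connecting successive $I_1$-vertices survive intact in $f'$, and iterated cyclic rotation together with rule~2 of $\equiv$ (permutation of incident closed paths at vertices of $I_1$) can be used to gather the remaining $a_i$'s into a leading block $a_1\cdots a_s$, with all detour content absorbed into the trailing factor $g$.

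The chief technical obstacle is this final rearrangement: tracing how a detour subpath, itself potentially long and visiting several vertices of $I_1$, can be commuted past segments of the skeleton $a$ using only equivalences of incident closed paths. The hypothesis $r=1$ is essential here; when $r\geq 2$, detours living in distinct good components cannot be pooled into a single tail $g$, which is precisely the obstruction that prevents writing $h\equiv a^2 f$.
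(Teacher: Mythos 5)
Your case $r=0$ is correct and coincides with the paper's argument. The problem is the case $r=1$, and it sits exactly where you yourself locate ``the chief technical obstacle'': the assertion that iterated cyclic rotation together with rule~2 ``can be used to gather the remaining $a_i$'s into a leading block $a_1\cdots a_s$'' is not a proof step but a restatement of the goal, and it is the entire content of the lemma. Rule~2 only permutes \emph{closed} paths incident at a common vertex, and cyclic rotation only moves the base point; a detour of $f'$ running from $v_i$ to $v_j$ with $v_i\neq v_j$ is not a closed path at any single vertex, so neither operation lets you commute it past a segment of the skeleton $a$. Reducing to $h\equiv a f'$ via Lemma~\ref{lemma_aaa} is legitimate but does not help: you then need $f'\equiv a g$ as an equivalence of closed paths, which is the same difficulty one level down.

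The paper closes precisely this gap with a concrete combinatorial mechanism that your sketch lacks. First every $v_i\in I_0$ is eliminated by contracting $a_{i-1}a_i$ to a single new arrow (possible because at such $v_i$ the path $h$ only passes through in the pattern $a_{i-1}a_i$, so $h\sim a_{i-1}a_if_1\,a_{i-1}a_if_2$). Then, with $I=\{v_1,\ldots,v_s\}$ the unique good component, one takes the $h$-restriction of $\Q$ to $\{v_1,\ldots,v_s\}$, deletes the arrows $a_1,\ldots,a_s$, and chooses a spanning tree of the resulting graph. Peeling off a leaf $v_i$ with its unique incident tree arrow $b$ places one in the hypotheses of Lemma~\ref{lemma_L4}, which yields $h\equiv a_{i-1}a_if_1\,a_{i-1}a_if_2$; the pair $a_{i-1}a_i$ is then contracted and the leaf removed. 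Induction down the tree produces $h\equiv af_1af_2\equiv a^2f_1f_2$ (with the degenerate case $s=1$ after contractions handled by Lemma~\ref{lemma_L0}), the desired contradiction. It is this use of the single connecting arrow at a leaf, via Lemma~\ref{lemma_L4}, that replaces the rearrangement you leave unproved; without it, or some equally explicit mechanism, your proof of the main case is incomplete.
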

\begin{proof}
Let $r$ be the number of good components and $\un{\de}=\mdeg(h)$. If $r=0$, then
$$
\de_b= \left\{
\begin{array}{rl}
0,&\text{if } b\neq a_i \text{ for all } i\\
2,&\text{otherwise }\\
\end{array}
\right.
$$
for $b\in\Arr{\Q}$. Hence $h\sim a^2$  and we have a contradiction.

Suppose $r=1$. If $v_i\in I_0$, then $h\sim a_{i-1}a_if_1 a_{i-1}a_if_2$ for some paths
$f_1,f_2$ that do not contain $a_{i-1}$ and $a_i$. Substitute a new arrow $a_{s+1}$ for
the path $a_{i-1}a_i$. Repeat this procedure for all elements of $I_0$. Thus we can
assume that $I_0=\emptyset$ and $I=\{v_1,\ldots,v_s\}$ is the only good component.

If $s=1$, then Lemma~\ref{lemma_L0} implies a contradiction. Otherwise, we consider the
$h$-restriction of $\Q$ to $v_1,\ldots,v_s$, remove arrows $a_1,\ldots,a_s$ from this
restriction, and denote the resulting quiver by $\QG$. Let $\QT$ be a spanning tree for
$\QG$, i.e.,
\begin{enumerate}
\item[a)] $\Ver{\QT}=\{v_1,\ldots,v_s\}$ and $\Arr{\QT}\subset\Arr{\QG}$;

\item[b)] If we consider $\QT$ as a graph without orientation, then it is a tree.
\end{enumerate}
Consider a leaf $v_i$ of $\QT$ together with the unique arrow $b\in\Arr{\QT}$ satisfying
$v_i\in\{b',b''\}$. Then the condition of Lemma~\ref{lemma_L4} is true and we have
$h\equiv a_{i-1}a_i f_1\, a_{i-1}a_i f_2$ for some $f_1,f_2\in\path(\Q)$. We remove the
vertex $v_i$ and the arrow $b$ from $\QT$ and denote the resulting quiver by $\QT_1$. As
above, we consider some leaf of $\QT_1$, apply Lemma~\ref{lemma_L4}, and so on. Finally,
we obtain $h\equiv af_1af_2\equiv a^2f_1f_2$ for some paths $f_1,f_2\in\path(\Q)$; a
contradiction.
\end{proof}

\begin{lemma}\label{lemma_predl_p16}
Suppose $\{v_1,\ldots,v_s\}=I_0\sqcup I_1\sqcup\cdots\sqcup I_r$ is the decomposition
into good components with respect to $h$, $r\geq2$, and
$V\subset\{v_1,\ldots,v_s\}\backslash I_1$. Let $b,c,e$ be pairwise different
good subpaths in $h$ such that
\begin{enumerate}
\item[a)] $b'\in I_1$ and $c',e'\in V$;

\item[b)] $v\in\Ver{b}\cap\Ver{c}\cap\Ver{e}$ for some $v$.
\end{enumerate}
Then there exists a closed path $h_0$ in $\Q$ such that $h_0\equiv h$ and
$$\{v_1,\ldots,v_s\}=I_0\sqcup\ov{I}_1\sqcup \bigsqcup_{k\in D} I_k\sqcup J_1\sqcup\cdots\sqcup J_l$$
is the decomposition into good components with respect to $h_0$, where $l\geq0$ and
$D=\{2,\ldots,r\}\backslash\{i,j\}$ for $c'\in I_i$, $e'\in I_j$. Moreover,
$\#\ov{I}_1>\#I_1$.
\end{lemma}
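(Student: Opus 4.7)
The plan is to construct $h_0$ by rearranging $h$ at $v$, exploiting rule~2 of the equivalence: since $\Char(K)=2$, incident closed paths can be freely permuted modulo $\equiv$. As $b,c,e$ are pairwise different subpaths of $h$, each containing $v$, they determine three distinct cuts of $h$ at $v$. Split $b=b_1b_2$, $c=c_1c_2$, $e=e_1e_2$ at $v$ (so that $b_1'=b_2''=v$, etc.). Cyclically rotating $h$ and relabelling if necessary, we may assume
$$h\sim A_1A_2A_3,$$
where $A_1$ starts with $b_2$ and ends with $c_1$, $A_2$ starts with $c_2$ and ends with $e_1$, and $A_3$ starts with $e_2$ and ends with $b_1$, each $A_k$ being a closed path at $v$. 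Apply the transposition $(23)$ and set $h_0:=A_1A_3A_2\equiv h$.

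The transitions in $h_0$ produce three new subpaths replacing $b,c,e$: namely $c_1e_2:c''\to e'$, $b_1c_2:b''\to c'$, and $e_1b_2:e''\to b'$. Each is a good subpath of $h_0$: endpoints lie in $\{v_1,\ldots,v_s\}$ since $b',b'',c',c'',e',e''$ do; the interiors contain no cycle vertex $v_\ell$ because the interiors of $b,c,e$ contain none; and the joining vertex $v$ is either not a cycle vertex (an innocuous interior point) or is an endpoint of each of $b,c,e$, in which case one factor of each split is empty and the new path collapses to a genuine good subpath. By part~(a) of the good-component definition we have $b',b''\in I_1$, $c',c''\in I_i$, and $e',e''\in I_j$, so the three new good subpaths connect $I_1$, $I_i$, $I_j$ pairwise.

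All good subpaths of $h$ contained within a single arc $A_k$ persist as good subpaths of $h_0$. Hence the components $I_k$ with $k\in D=\{2,\ldots,r\}\setminus\{i,j\}$ are preserved intact, being disjoint from $I_1\cup I_i\cup I_j$. Removing the single edge $b$ from the good-subpath graph of $h$ restricted to $I_1$ splits $I_1$ into at most two pieces, each containing one of $b'$, $b''$; since $b',b''\in\ov{I}_1$ via the new transitional subpaths, both pieces lie in $\ov{I}_1$. The same reasoning applied to $I_i$ (with $c$) and $I_j$ (with $e$) yields
$$\ov{I}_1\supseteq I_1\cup I_i\cup I_j,$$
so $\#\ov{I}_1\geq \#I_1+\#I_i\geq \#I_1+1$ since $I_i\neq\emptyset$. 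Vertices of $\{v_1,\ldots,v_s\}$ whose connectivity in $h$ relied on subpaths disrupted by the rearrangement form the new components $J_1,\ldots,J_l$ (some may fall into $I_0$).

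The main obstacle is controlling good subpaths of $h$ different from $b,c,e$ that pass through the cuts at $v$, which is possible when $v\notin\{v_1,\ldots,v_s\}$; these get reshaped in $h_0$. One must check that no spurious new connectivity appears linking some $I_k$ ($k\in D$) to $\ov{I}_1$, and also that the minimality clause~(c) of the good-component definition holds for $\ov{I}_1$, ensuring it is a single good component rather than a union of several.
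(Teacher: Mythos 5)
Your construction of $h_0$ is exactly the paper's: split $b,c,e$ at $v$, write $h\sim A_1A_2A_3$ with the cuts as the junctions, and transpose the last two blocks (the paper treats the second cyclic ordering as a separate symmetric case rather than by relabelling, which is immaterial). The identification of the new good subpaths $b_1c_2$, $c_1e_2$, $e_1b_2$ and the preservation of $I_0$ and of $I_k$ for $k\in D$ also match the paper.

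The gap is in the final connectivity step. To conclude that both pieces of $I_1\setminus\{b\}$ lie in $\ov{I}_1$ you appeal to ``$b',b''\in\ov{I}_1$ via the new transitional subpaths,'' but the three new subpaths are three pairwise disjoint edges $\{b'',c'\}$, $\{c'',e'\}$, $\{e'',b'\}$; they connect $b'$ and $b''$ to each other only \emph{through} the other components. If each of $I_1$, $I_i$, $I_j$ were to split into two pieces $P\ni(\cdot)'$, $Q\ni(\cdot)''$ upon deleting its marked edge, the new edges would merge them into \emph{three} disjoint components $Q_1\cup P_i$, $Q_i\cup P_j$, $Q_j\cup P_1$, and the argument (and the inequality $\#\ov{I}_1>\#I_1$) would collapse. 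So the claim is circular as written. The missing ingredient is part~2 of Lemma~\ref{lemma_good_components}: any two vertices of $I_1$ lie on a closed circuit of pairwise different good subpaths, so deleting the single subpath $b$ cannot disconnect $I_1$ (one travels the long way around the circuit); hence $I_1$ survives intact in $h_0$ and the new edges attach $c'$ and $e''$ to it, which is how the paper gets $\#\ov{I}_1>\#I_1$. Relatedly, your stronger claim $\ov{I}_1\supseteq I_1\cup I_i\cup I_j$ is false in general: the lemma allows $i=j$, in which case $I_i$ loses \emph{two} edges, may genuinely disconnect, and a piece of it not adjacent to the new edges into $I_1$ becomes one of the components $J_1,\ldots,J_l$ --- this is precisely why the statement provides for the $J_k$'s, and your closing paragraph's unresolved ``one must check'' items are symptoms of this unsettled bookkeeping.
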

\begin{proof}
We have $b=b_1b_2$, $c=c_1c_2$, and $e=e_1e_2$ for some paths $b_i,c_i,e_i$ in $\Q$
($i=1,2$) with $b_1'=c_1'=e_1'=v$. There are two possibilities:

\textbf{1.} If $h\sim b_2 f_1 c_1 \cdot c_2 f_2 e_1 \cdot e_2 f_3 b_1$ for some
$f_1,f_2,f_3\in\path(\Q)$, then we define $h_0=b_2 f_1 c_1 \cdot e_2 f_3 b_1 \cdot c_2
f_2 e_1$ and we have $h_0\equiv h$. Let $S_0$ and $S$ be the sets of good subpaths in
$h_0$ and $h$, respectively. Then $S_0=(S\cup\{b_1c_2,c_1e_2,e_1b_2\})\backslash
\{b,c,e\}$. Clearly, $I_k$ is a good component with respect to $h_0$, where $2\leq k\leq
r$ and $k\neq i,j$, and $I_0$ is the null component with respect to $h_0$. By part~2 of
Lemma~\ref{lemma_good_components}, the set $I_1$ and the vertices $c'$ and $e''$ belong
to one and the same good component with respect to $h_0$, which we denote by $\ov{I}_1$.
Thus $\#\ov{I}_1>\# I_1$ and the claim is proven.

\textbf{2.} If $h\sim b_2 f_1 e_1 \cdot e_2 f_2 c_1 \cdot c_2 f_3 b_1$ for some
$f_1,f_2,f_3\in\path(\Q)$, then the proof is analogous.
\end{proof}

\begin{lemma}\label{lemma_partII_main}
Suppose $h\not\equiv a^2\!f$ for all $f\in\path(\Q)$. Then there exists a closed path
$h_0$ in $\Q$ and a good component $I$ with respect to $h_0$ such that $h_0\equiv h$ and
if good subpaths $b,c$ in $h_0$ and $v\in\Ver{\Q}$ satisfy the following condition:
\begin{eq}\label{eq_lemma_partII_main}
b'\in I,\, c'\not\in I,\text{ and }v\in \Ver{b}\cap\Ver{c},
\end{eq}
then
\begin{enumerate}
\item[a)] $b$ is the unique good subpath in $h_0$ satisfying~\Ref{eq_lemma_partII_main},
i.e., $h_0\sim bf_0$, where $f_0$ do not contain a good subpath $b_1$ with $b_1'\in I$
and $v\in\Ver{b_1}$;

\item[b)] $\deg_v(b)=1$.
\end{enumerate}
\end{lemma}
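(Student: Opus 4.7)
The plan is an extremality argument. By Lemma~\ref{lemma_p14}, the hypothesis $h\not\equiv a^2 f$ guarantees that every closed path equivalent to $h$ has at least two good components, so pairs $(h_0,I)$ with $h_0\equiv h$ and $I$ a good component with respect to $h_0$ exist. Among all such pairs I would first maximize $\#I$, and among all pairs achieving this maximum, minimize the auxiliary invariant
$$N(h_0,I)=\sum_{(b,v)} \deg_v(b),$$
where the sum runs over pairs $(b,v)$ such that $b$ is a good subpath of $h_0$ with $b'\in I$ and $v\in\Ver{b}\cap\Ver{c}$ for some good subpath $c$ of $h_0$ with $c'\not\in I$. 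The claim is that both (a) and (b) hold for any extremal pair; a violation of either produces a strictly better pair.

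For (a), suppose there are two distinct good subpaths $b,\tilde b$ of $h_0$ with $b',\tilde b'\in I$, a good subpath $c$ with $c'$ lying in some other good component $I_j$, and a vertex $v\in\Ver{b}\cap\Ver{\tilde b}\cap\Ver{c}$. I apply Lemma~\ref{lemma_predl_p16} with reversed roles: take its distinguished component to be $I_j$, its distinguished subpath to be $c$ (so $c'\in I_j$), and its two outer subpaths to be our $b,\tilde b$ (whose heads lie in $V=\{v_1,\ldots,v_s\}\setminus I_j$). The lemma produces $h_0'\equiv h_0$ with an enlarged component $\ov{I}_j$; since $\ov{I}_j$ contains both $b'$ and $\tilde b'$, part~2 of Lemma~\ref{lemma_good_components} forces $\ov{I}_j$ to absorb all of $I$, giving $\#\ov{I}_j\geq \#I_j+\#I>\#I$, contradicting maximality of $\#I$.

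For (b), assume (a) holds and $\deg_v(b)\geq 2$ for some $b,c,v$ as in \eqref{eq_lemma_partII_main}. First, $v\not\in\{v_1,\ldots,v_s\}$: otherwise $\degII{v_i}{b}=\degII{v_i}{c}=0$ would force $v$ to be a common endpoint of $b$ and $c$, hence to lie simultaneously in $I$ and in the good component of $c'$, contradicting disjointness of good components. So $v$ is internal to both $b$ and $c$. Split $b=\beta_1\gamma\beta_2$ with $\gamma$ a nontrivial closed path at $v$ chosen strictly inside $b$ (possible because $\deg_v(b)\geq 2$ and $v\neq b',b''$), split $c=c_1 c_2$ at $v$, and write $h_0\sim \beta_1\gamma\beta_2 f_1 c_1 c_2 f_2$. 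Cut $h_0$ at its three visits to $v$ into the closed paths $\pi_1=\gamma$, $\pi_2=\beta_2 f_1 c_1$, $\pi_3=c_2 f_2 \beta_1$; equivalence rule~2 in characteristic~$2$ then yields $h_0'=\pi_2\pi_1\pi_3\equiv h_0$. In $h_0'$ the subpath $b$ is replaced by the good subpath $\beta_1\beta_2$ (endpoints still in $I$, but now $\deg_v(\beta_1\beta_2)=1$), and $\gamma$ migrates into the new good subpath $c_1\gamma c_2$ (head still outside $I$); the good component $I$ is preserved with the same cardinality, while $N(h_0',I)<N(h_0,I)$, contradicting the minimal choice of $N$.

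The main obstacle lies in part~(b): one must verify that $\beta_1\beta_2$ and $c_1\gamma c_2$ are genuinely good subpaths (which relies on $\degII{v_i}{\gamma}=0$, inherited from goodness of $b$ together with $\gamma$ being strictly interior so its heads are internal heads of $b$), that $I$ and its cardinality really do survive the rearrangement, and that $N$ strictly decreases despite the possible appearance of new pairs $(\beta_1\beta_2,v')$ at other vertices $v'\neq v$. Controlling this bookkeeping of internal heads and of the good component decomposition across the swap is the delicate technical step.
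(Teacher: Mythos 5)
Your overall strategy (rearrange $h$ by the swap of Lemma~\ref{lemma_predl_p16} to handle uniqueness, then migrate the closed subpath at $v$ from $b$ into $c$ by permuting the three incident closed paths to handle $\deg_v(b)=1$) is the same as the paper's, and your part~(b) swap is essentially identical to the paper's. But both of your termination arguments have genuine gaps.

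For part~(a), maximizing $\#I$ does not give a contradiction. When you apply Lemma~\ref{lemma_predl_p16} with distinguished component $I_j$ and outer subpaths $b,\tilde b$ (both with heads in $I$), the lemma destroys the good subpaths $b$ and $\tilde b$; these may be essential for the connectivity of $I$ under the endpoint relation, so $I$ can shatter into a piece absorbed by $\ov I_j$ plus new components $J_1,\ldots,J_l$ (this is exactly why the $J_k$ appear in the statement of Lemma~\ref{lemma_predl_p16}). Part~2 of Lemma~\ref{lemma_good_components} only guarantees $\ov I_j\supseteq I_j\cup\{b',\tilde b''\}$, not $\ov I_j\supseteq I$: for instance, if $I=\{u_1,u_2,u_3,u_4\}$ is held together precisely by $b\colon u_1\to u_2$, $g_1\colon u_2\to u_3$, $\tilde b\colon u_3\to u_4$, $g_2\colon u_4\to u_1$, the swap leaves $\{u_1,u_4\}$ as a separate new component and $\#\ov I_j=\#I_j+2$, which need not exceed $\#I$. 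The paper avoids this by a different bookkeeping: it repeatedly enlarges the \emph{first} component until it dominates the union of all later ones, then the second, and so on, producing a linear order in which each component is ``$>$'' the union of its successors; condition~(a) then holds for the \emph{last} component, because a violation would exhibit the triple $(c,b,b_1)$ needed to contradict the domination of the component containing $c'$.

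For part~(b), your potential $N(h_0,I)=\sum_{(b,v)}\deg_v(b)$ is not shown to decrease, and you concede as much: when $\gamma$ migrates into $c_1\gamma c_2$, the vertices of $\gamma$ become vertices of a good subpath with head outside $I$, so new admissible pairs $(\tilde b,v')$ can be created for other $I$-headed subpaths $\tilde b$ passing through vertices of $\gamma$, and $N$ can go up. The paper's measure is simpler and manifestly monotone: the total length $\sum\deg(b)$ over all good subpaths $b$ with $b'\in I$ drops by exactly $\deg(\gamma)\geq1$ at each step, since the swap only transfers arrows out of an $I$-headed subpath. You also need to note (as the paper does) that the swap preserves the good components \emph{and} the already-established condition~(a); your observation that the endpoint graph is unchanged covers the first point but you do not address the second.
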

\begin{proof} The proof consists of two parts. At first we find $h_0$ and $I$ that satisfy
condition~a), then we change $h_0$ to make condition~b) valid.

\textbf{a)} For a good component $I$ with respect to $h$ and
$V\subset\{v_1,\ldots,v_s\}\backslash I$, we write $I>V$ if the condition of
Lemma~\ref{lemma_predl_p16} does not hold for $I_1=I$ and $V$.

Suppose $\{v_1,\ldots,v_s\}=I_0\sqcup I_1\sqcup \cdots \sqcup I_r$ is the decomposition
into good components with respect to $h$. If $I_1\not>I_2\sqcup\cdots\sqcup I_r$, then
Lemma~\ref{lemma_predl_p16} implies that there is an $h^{(0)}\equiv h$ such that
$\#\ov{I}_1>\#I_1$ for a good component $\ov{I}_1$ with respect to $h^{(0)}$. Repeat this
procedure for $\ov{I}_1$ and so on. Finally, we obtain $h_1\equiv h$ such that
$I_{11},\ldots,I_{1r_1}$ are all good components with respect to $h_1$ and
$I_{11}>I_{12}\sqcup\cdots\sqcup I_{1r_1}$. Note that $r_1\geq2$ by
Lemma~\ref{lemma_p14}.

If $I_{12}\not>I_{13}\sqcup\cdots\sqcup I_{1r_1}$, then we act as above; and so on.
Finally, we obtain $h_l\equiv h$ such that $I_{l1},\ldots,I_{lr_l}$ are all good
components with respect to $h_l$ and $I_{ii}>I_{i,i+1}\sqcup\cdots\sqcup I_{ir_l}$ for
any $1\leq i< r_l$. Then condition~a) holds for $h_0=h_l$ and $I=I_{lr_l}$.

\textbf{b)} Consider $h_0$ and $I$ that have been constructed in part~a) of the proof.
Suppose $b,c$ are good subpaths with respect to $h_0$, $b'\in I$, $c'\not\in I$, and
$v\in\Ver{\Q}$. If $\deg_v(b)\geq2$, then $b=b_1qb_2$ for some paths $b_1,q,b_2$
satisfying $q'=q''=v$ and $\deg_v(b_1b_2)=0$. Assume that $c=c_1c_2$ for paths $c_1,c_2$
with $c_1'=c_2''=v$ and $h\sim bf_1c f_2$ for some paths $f_1,f_2$. Then %
$$h_0\sim b_2 f_1 c_1\cdot c_2 f_2 b_1\cdot q\equiv  c_2 f_2 b_1\cdot  b_2 f_1 c_1\cdot q,$$
and we define $h_1=c_2 f_2 b_1\cdot  b_2 f_1 c_1\cdot q$.  Let $S_1$ and $S_0$ be the
sets of good subpaths in $h_1$ and $h_0$, respectively. Then
$S_1=(S_0\cup\{b_1b_2,c_1qc_2\})\backslash \{b,c\}$. It is not difficult to see that
every good component with respect to $h_1$ is a good component with respect to $h_0$ and
vice versa. Moreover, condition~a) remains valid for $h_1$.

If condition~b) of the lemma does not hold for $h_1$ and some paths $b$ and $c$, then we
repeat the above procedure for $h_1$; and so on. Denote by $k$ the sum $\sum\deg{b}$ that
ranges over all $b\in\Arr{\Q}$ with $b'\in I$. After each step of the procedure $k$ is
diminished by a positive number. Hence we finally obtain $h_0$ that satisfies
conditions~a) and~b).
\end{proof}

Now we assume that $h$ is a closed path in $\Q$ with $\deg_{a_i}(h)\geq2$ for all $i$.

\begin{lemma}\label{lemma_partII_main2}
Suppose $h\not\equiv a^2\!f$ for all $f\in\path(\Q)$. Then there exists a closed path
$h_0$ in $\Q$ and a semi-good component $I$ with respect to $h_0$ such that $h_0\equiv h$
and if good subpaths $b,c$ in $h_0$ and $v\in\Ver{\Q}$ satisfy~\Ref{eq_lemma_partII_main}
then conditions~a) and~b) of Lemma~\ref{lemma_partII_main} are valid.
\end{lemma}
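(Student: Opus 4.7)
My plan is to adapt the proof of Lemma~\ref{lemma_partII_main} to the broader hypothesis $\deg_{a_i}(h) \geq 2$ by systematically replacing "good" with "semi-good" throughout the argument. Since every good subpath is a semi-good subpath, the conclusion about good $b, c$ will follow automatically from the natural semi-good analogue. The key will be to verify that the three preparatory facts used in the proof of Lemma~\ref{lemma_partII_main} — namely Lemmas~\ref{lemma_good_components}, \ref{lemma_p14}, and \ref{lemma_predl_p16} — have semi-good counterparts under the weaker assumption.

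For the semi-good analogue of Lemma~\ref{lemma_good_components}, I would set $\un{\theta} = \mdeg(h) - 2\mdeg(a)$, which is non-negative under the current hypothesis, and view it as a multidegree on the $h$-restriction of $\Q$ to $\{v_1, \ldots, v_s\}$. An arrow $a_i$ itself now qualifies as a semi-good subpath precisely when $\deg_{a_i}(h) \geq 3$, i.e.~$\theta_{a_i} \geq 1$; so the semi-good subpaths are exactly the paths whose arrows all lie in $\Q_{\un{\theta}}$. Consequently the non-null semi-good components are the vertex sets of the strongly connected components of $\Q_{\un{\theta}}$, and Lemma~\ref{lemma_mdeg} applied componentwise delivers the required connectivity. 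The semi-good version of Lemma~\ref{lemma_p14} follows by copying the original: if there are no semi-good components, then no $a_i$ has $\deg_{a_i}(h) \geq 3$ and $\mdeg(h) = 2\mdeg(a)$, forcing $h \sim a^2$; if there is exactly one, the spanning-tree plus Lemma~\ref{lemma_L4} argument goes through since the hypothesis $\deg_{a_{i-1}}(h), \deg_{a_i}(h) \geq 2$ of Lemma~\ref{lemma_L4} is built into our assumption. The semi-good analogue of Lemma~\ref{lemma_predl_p16} is immediate because the concatenations $b_1 c_2$, $c_1 e_2$, $e_1 b_2$ of semi-good subpaths remain semi-good.

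With these analogues in hand, the two-step construction from the proof of Lemma~\ref{lemma_partII_main} will carry over almost verbatim. Step~(a) will iteratively apply the semi-good Lemma~\ref{lemma_predl_p16} to isolate a semi-good component $I$ that is "$>$" the union of the remaining semi-good components, producing some $h_l \equiv h$ satisfying the uniqueness condition. Step~(b) will rewrite $h_l$ to eliminate each vertex $v$ with $\deg_v(b) \geq 2$ for a semi-good $b$ with $b' \in I$, using the same equivalence as in the original proof; the process terminates because the quantity $\sum_{b' \in I} \deg(b)$ (summed over arrows $b \in \Arr{\Q}$) strictly decreases at every step. The resulting $h_0$ will then satisfy conditions~(a) and~(b) for all semi-good pairs $b, c$ — in particular for all good $b, c$ — and $I$ is the desired semi-good component.

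The main obstacle will be to verify that the rewrite operations used in step~(b), which can shuffle extra occurrences of $a_i$ when $\deg_{a_i}(h) \geq 3$, do not destroy the semi-good component structure built in step~(a). As in the original proof, I expect to argue that each transformation replaces a pair $\{b, c\}$ of semi-good subpaths by another pair $\{b_1 b_2, c_1 q c_2\}$ of semi-good subpaths whose union of endpoints coincides with $\{b', b'', c', c''\}$, so that the partition into semi-good components is unchanged and the "$>$" property established in step~(a) is preserved throughout.
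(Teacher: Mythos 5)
Your proposal reaches the right statement but by a genuinely different and much heavier route than the paper. The paper's entire proof is a three-line reduction to Lemma~\ref{lemma_partII_main}: for each $i$ with $l=\deg_{a_i}(h)\geq3$ one writes $h\sim a_ic_1\cdots a_ic_l$, adjoins a new arrow $b_i$ parallel to $a_i$, and reroutes the surplus occurrences, replacing $h$ by $a_ic_1a_ic_2b_ic_3\cdots b_ic_l$. In the enlarged quiver $\QG$ the standing hypothesis $\deg_{a_i}(h_1)=2$ is restored, Lemma~\ref{lemma_partII_main} applies verbatim, good subpaths and good components of $h_1$ in $\QG$ correspond exactly to semi-good subpaths and semi-good components of $h$ in $\Q$ (the only extra semi-good subpaths are the $a_i$ with $\deg_{a_i}(h)\geq3$, and these are represented by the genuinely good arrows $b_i$), and substituting $a_i$ back for $b_i$ respects $\equiv$. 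What the paper's device buys is that none of Lemmas~\ref{lemma_good_components}--\ref{lemma_partII_main} needs to be touched; what your route buys is that you never leave the original quiver.

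The concrete weak point in your plan is the claim that the semi-good analogues follow ``by copying the original''. The copying breaks precisely where a surplus occurrence of some $a_i$ has to be manipulated: Lemma~\ref{lemma_L4} (and the surrounding setup) explicitly requires $b\neq a_i$ for all $i$, so in the $r=1$ case of your semi-good version of Lemma~\ref{lemma_p14} the spanning-tree argument cannot be run on an arrow of the restriction that is itself a third copy of some $a_i$; likewise, removing $a_1,\ldots,a_s$ from the $h$-restriction may disconnect a semi-good component whose connectivity is carried only by such surplus copies. The natural repair is exactly the paper's parallel-arrow substitution, applied locally --- at which point one may as well apply it globally and invoke Lemma~\ref{lemma_partII_main} directly. (A shortcut you could also exploit: if the number of semi-good components is $\leq1$ and $h\not\equiv a^2f$, the conclusion of the lemma is nearly vacuous for good $b,c$, so the full strength of the semi-good Lemma~\ref{lemma_p14} is not really needed.) The remaining analogues you list (of Lemmas~\ref{lemma_good_components} and~\ref{lemma_predl_p16}, and steps~a), b) of the main construction) do go through as you describe, modulo the degenerate decompositions $b=b_1b_2$ when the semi-good subpath involved is a single arrow $a_i$.
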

\begin{proof}
Suppose $h\sim a_ic_1\cdots a_ic_l$ for some $1\leq i\leq s$, where
$l=\deg_{a_i}(h)\geq3$. Then we add a new arrow $b_i$ to $\Q$ and define $b_i'=a_i'$,
$b_i''=a_i''$. Moreover, we substitute $a_ic_1a_ic_2b_ic_3\cdots b_ic_l$ for $h$. After
performing this procedure for all $i$ we obtain a strongly connected quiver $\QG$ and a
closed path $h_1$ in $\QG$ satisfying $\deg_{a_i}(h_i)=2$ for all $i$.
Lemma~\ref{lemma_partII_main} completes the proof.
\end{proof}

\section{The main upper bound}\label{section_part3}

Suppose $\Q\in\Q(n,d,m)$ is a quiver and $\Char(K)=2$. The set $\OmegaZwei(\Q)$ has been defined
in Section~\ref{section_part4}. This section is dedicated to the proof of the following
theorem.

\begin{theo}\label{theo_part3}
If $h\not\equiv0$ is a closed path in $\Q$, then $\deg(h)\leq m(d-n-1)+2n$.
\end{theo}

\begin{lemma}\label{lemma_p26}
Suppose $h$ is a closed path in $\Q$ such that $h\not\equiv0$ and
$\mdeg(h)\not\in\OmegaZwei(\Q)$. Then $h\equiv cf$, where $f$ and $c=c_1b_1c_2b_2c_3$ are
closed paths in $\Q$, paths $c_1,c_2,c_3$ can be empty, $b_1,b_2\in\Arr{\Q}$, and the
following conditions hold:
\begin{enumerate}
\item[a)] $\deg_{b_1}(h)=\deg_{b_2}(h)=1$;

\item[b)] $\Ver{c_1}\cap \Ver{c_2}=\emptyset$, $\Ver{c_2}\cap \Ver{c_3}=\emptyset$, and %
$\Ver{c_1}\cap \Ver{c_3}=c''_1=c'_3=v_0$. Schematically, this condition is depicted as%
$$
\begin{picture}(100,30)
\put(20,15){%
\put(0,0){\vector(3,2){30}}\put(8,13){$\scriptstyle c_1$}%
\put(30,-20){\vector(-3,2){30}}\put(8,-15){$\scriptstyle c_3$}%
\put(30,20){\vector(2,-1){20}}\put(40,18){$\scriptstyle b_1$}%
\put(50,-10){\vector(-2,-1){20}}\put(40,-22){$\scriptstyle b_2$}%
\put(50,10){\vector(0,-1){20}}\put(54,-2){$\scriptstyle c_2$}%
\put(-8,-2){$\scriptstyle v_0$}%
}%
\end{picture}
$$

\item[c)] for all $v\in\Ver{c}$ with $\deg_v(c)\geq2$ we have $\deg_v(c)=\deg_v(h)$. In
particular, $\deg_{v_0}(c)=1$;

\item[d)] for all $v\in\Ver{c_2}$ we have $\deg_v(h)>\deg_v(c)=1$.
\end{enumerate}
\end{lemma}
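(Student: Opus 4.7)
The plan is to extract from the hypothesis $\mdeg(h)\not\in\OmegaZwei(\Q)$ a $\mdeg(h)$-double primitive closed path $a=a_1\cdots a_s$ for which $\un{\theta}:=\mdeg(h)-2\mdeg(a)$ fails to be decomposable with respect to $\Q$, and then apply Lemma~\ref{lemma_partII_main2} to put $h$ into a convenient cyclic form. Preliminarily, $h\not\equiv 0$ forces $h\not\equiv a^2 f$ for every $f\in\path(\Q)$, since $a^2 f\equiv 0$ either by equivalence~4 (when $f$ is empty) or by equivalence~3 (when $f$ is a non-empty closed path at $a'$). Together with $\deg_{a_i}(h)\geq 2$ (which holds because $a$ is $\mdeg(h)$-double), this permits invoking Lemma~\ref{lemma_partII_main2} to obtain a closed path $h_0\equiv h$ and a semi-good component $I\subset\{v_1,\ldots,v_s\}$ such that at every vertex $v$ shared by a good subpath of $h_0$ entering $I$ and a good subpath of $h_0$ lying outside $I$, the entering one is unique and meets $v$ with $\deg_v(b)=1$.

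Next I would locate the arrows $b_1,b_2$ with $\deg_{b_i}(h)=1$. Applying Lemma~\ref{lemma_pA291} to $h_0$ yields a decomposition $\mdeg(h_0)=\sum_i\mdeg(p_i)+2\sum_k\mdeg(q_k)$ into primitive closed paths, with distinguished arrows $x_i\in\Arr{p_i}$ satisfying $\deg_{x_i}(h_0)-2\sum_k\deg_{x_i}(q_k)=1$. Because $\un{\theta}$ is not decomposable, at least two such distinguished arrows must lie on good subpaths of $h_0$ that cross between $I$ and its complement --- an outgoing crossing from $I$ and an incoming one back into $I$. Naming these two arrows $b_1$ and $b_2$, we obtain $\deg_{b_1}(h)=\deg_{b_2}(h)=1$, which is condition~(a).

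With $b_1$ and $b_2$ fixed, I would isolate $c=c_1 b_1 c_2 b_2 c_3$ by cyclically rotating $h_0$: let $c_2$ be the sub-walk of $h_0$ running from just after $b_1$ to just before $b_2$, which is forced into the complement of $I$; let $c_1$ (respectively $c_3$) be the shortest sub-walk of $h_0$ preceding $b_1$ (respectively following $b_2$) that returns to a common vertex $v_0\in I$. The uniqueness clause of Lemma~\ref{lemma_partII_main2} then forces $\Ver{c_1}\cap\Ver{c_3}=\{v_0\}$, while the semi-good-component property of $I$ gives $\Ver{c_2}\cap(\Ver{c_1}\cup\Ver{c_3})=\emptyset$, yielding condition~(b). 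Setting $f$ to be the remaining closed sub-walk of $h_0$ at $v_0$ produces $h_0\equiv cf$.

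The main obstacle will be verifying conditions~(c) and~(d). For~(c), the construction of $c_1,c_3$ must be sufficiently minimal that whenever $v\in\Ver{c}$ satisfies $\deg_v(c)\geq 2$, the path $c$ already absorbs every pass of $h_0$ through $v$; otherwise an extra pass could be cyclically spliced into $c$, contradicting either the uniqueness from Lemma~\ref{lemma_partII_main2} or the minimality of $c_1$ and $c_3$. For~(d), each $v\in\Ver{c_2}$ lies in the complement of $I$ and, by non-decomposability of $\un{\theta}$ together with part~2 of Lemma~\ref{lemma_good_components} applied to its semi-good component, is visited by $h_0$ via at least two good subpaths; the construction guarantees $\deg_v(c)=1$, so $\deg_v(h)>\deg_v(c)=1$ as required.
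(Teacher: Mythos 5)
Your overall skeleton matches the paper's: extract a $\mdeg(h)$-double path $a$ with $\mdeg(h)-2\mdeg(a)$ not decomposable, note $h\not\equiv a^2f$, and invoke Lemma~\ref{lemma_partII_main2} to normalize $h$. But the step where you locate $b_1,b_2$ is genuinely broken. Lemma~\ref{lemma_pA291} does \emph{not} supply arrows of $h$-degree $1$: its distinguished arrows $x_i$ satisfy $\deg_{x_i}(h)-2\sum_k\deg_{x_i}(c_k)=1$, i.e.\ $\deg_{x_i}(h)$ may be $3,5,\dots$, so condition~(a) does not follow. Moreover your claim that ``at least two such distinguished arrows must lie on good subpaths crossing between $I$ and its complement'' has no justification --- the decomposition in Lemma~\ref{lemma_pA291} is blind to the component $I$. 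The paper's route is different and essential: it defines $V\subset\Ver{\Q}\setminus\Ver{a}$ as the set of vertices covered by good subpaths $e$ with $e'\not\in I$, uses indecomposability of $\mdeg(h)-2\mdeg(a)$ to produce a \emph{single} good subpath $b=b_1\cdots b_l$ with $b'\in I$ and $\Ver{b}\cap V\neq\emptyset$, and takes $b_1,b_2$ (there $b_i,b_j$) to be the first entry into and first exit from $V$ along $b$. The equality $\deg_{b_i}(h)=1$ is then forced by the uniqueness and $\deg_v(b)=1$ clauses of Lemma~\ref{lemma_partII_main2} together with minimality of $i$: a second good subpath through $b_i$ anchored in $I$ would violate uniqueness, and one anchored outside $I$ would put $b_i''$ in $V$, contradicting minimality. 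Placing both $b_1$ and $b_2$ on the same good subpath is also what makes $c_2$ lie entirely inside $V$, which is what conditions~(b) and~(d) rest on; your version, with $b_1,b_2$ possibly on different subpaths, cannot guarantee this.

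Condition~(c) is a second gap: you correctly identify it as the main obstacle but only gesture at ``sufficient minimality'' of $c_1,c_3$. No choice of $c_1,c_3$ alone achieves~(c); the paper proves it by an active rewriting step, repeatedly applied: if some $v\in\Ver{c_1}$ has $\deg_v(h)>\deg_v(c_1)\geq2$, write $c_1=e_1e_2e_3$ with $e_2$ closed at $v$, use the commutation equivalence to move $e_2$ out of $c$ and into $f$, and replace $c_1$ by $e_1e_3$. Without this splicing argument (and the check that it preserves the already-established properties), the lemma is not proved.
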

\begin{proof}
Since $\mdeg(h)\not\in\OmegaZwei(\Q)$, there is a $\mdeg(h)$-double path $a$ in $\Q$ such that %
$\mdeg(\Q)-2\mdeg(a)$ is indecomposable. 
We apply Lemma~\ref{lemma_partII_main2} to $h$ and $a$ to obtain a closed path $h_0$ in
$\Q$ and a semi-good component $I$ satisfying the conditions from
Lemma~\ref{lemma_partII_main2}. Without loss of generality we can assume that $h=h_0$. In
what follows, all good subpaths in $h$ will be considered with respect to $a$. Define the
subset
$$V\subset\Ver{\Q}\backslash\Ver{a}$$ %
that contains $v$ if and only if there is a good subpath $e$ in $h$
such that $e'\not\in I$ and $v\in\Ver{e}$. %
Since $\mdeg(\Q)-2\mdeg(a)$ is indecomposable, there is a good subpath $b=b_1\cdots b_l$
in $h$ satisfying $b'\in I$ and $\Ver{b}\cap V\neq\emptyset$, where
$b_1,\ldots,b_l\in\Arr{\Q}$. Since $b''_1\not\in V$ and $b'_l\not\in V$, we can define
$$i=\min\{1\leq k\leq l\,|\,b'_k\in V\}\text{ and }%
j=\min\{i< k\leq l\,|\,b'_k\not\in V\}.
$$
Let $\deg_{b_i}(h)>1$, then there is a good subpath $e$ in $h$ with $b_i\in\Arr{e}$ and
$h\sim b f_1 e f_2$ for some $f_1,f_2\in\path(\Q)$ because $\deg_{b'_i}(b)=1$ by
Lemma~\ref{lemma_partII_main2}. If $e'\in I$, then we obtain a contradiction to
Lemma~\ref{lemma_partII_main2}. If $e'\not\in I$, then $b''_i\in V$; a contradiction.
Therefore, $\deg_{b_i}(h)=1$ and, similarly, $\deg_{b_j}(h)=1$.

If $\Ver{b_1\cdots b_{i-1}}\cap\Ver{b_{j+1}\cdots b_{l}}\neq\emptyset$, then $h\sim c f$
for $c=c_1b_ic_2b_jc_3$ satisfying part~b) of the lemma, where $f$ is a path in $\Q$,
$c_2=b_{i+1}\cdots b_{j-1}\in\path(\Q)$, and $c_1,c_3\in\path(\Q)$ are subpaths in
$b_1\cdots b_{i-1}$, $b_{j+1}\cdots b_{l}$, respectively. Moreover, we can assume that
$\deg_{v_0}(c_1)=\deg_{v_0}(c_3)=1$.

If $\Ver{b_1\cdots b_{i-1}}\cap\Ver{b_{j+1}\cdots b_{l}}=\emptyset$, then, taking into
account Lemma~\ref{lemma_aaa}, we have $h\equiv c f$, where $c$ satisfies the same
conditions as above and $f$ is a path in $\Q$.

If there is a $v\in\Ver{c_1}$ such that $\deg_v(h)>\deg_v(c_1)\geq2$, then
$c_1=e_1e_2e_3$, where $e_1,e_2,e_3\in\path(\Q)$, $e_2'=e_2''=v$, and
$\deg_v(e_1)=\deg_v(e_3)=1$. We have $h\sim e_1\cdot e_2\cdot e_3b_ic_2b_jc_3 f_1 \cdot
p\cdot f_2$ for some
$f_1,f_2\in\path(\Q)$ and $p'=p''=v$. Thus %
$h\equiv e_1\cdot e_3b_ic_2b_jc_3 f_1\cdot  e_2\cdot p\cdot f_2$ and we change notations
by putting $c=e_1e_3b_ic_2b_jc_3$. We repeat this procedure for all vertices of
$c_1,c_2,c_3$ and obtain that part~c) of the lemma holds.

Since $\Ver{c_2}\subset V$, part~d) of the lemma is a consequence of part~c).
\end{proof}

We assume that $h\equiv cf\not\equiv0$ is a closed path from Lemma~\ref{lemma_p26}, where
$c=c_1b_1c_2b_2c_3$. Define sets
$$S_{arr}=\{a\in\Arr{c_3c_1}\,|\,\deg_a(h)=\deg_a(c)\},$$
$$S_{ver}=\{v\in\Ver{c_3c_1}\,|\,\deg_v(h)=\deg_v(c)\}.$$
In this section we will use the next remark.

\begin{remark}
\begin{enumerate}
\item[1.] Since $f$ is not an empty path, we have $v_0\not\in S_{ver}$.

\item[2.] For all $v\in\Ver{c_i}$, $a\in\Arr{c_i}$ the equalities
$\deg_v(c)=\deg_v(c_i)$, $\deg_a(c)=\deg_a(c_i)$ hold ($i=1,2,3$).
\end{enumerate}
\end{remark}

\begin{lemma}\label{lemma_p27}
For all $v\in\Ver{c_3c_1}$, $a\in\Arr{c_3c_1}$ we have
\begin{enumerate}
\item[a)] if $a'\in S_{ver}$ or $a''\in S_{ver}$, then $a\in S_{arr}$;

\item[b)] if $a'\not\in S_{ver}$ or $a''\not\in S_{ver}$, then $\deg_a(c)=1$;

\item[c)] if $v\not\in S_{ver}$, then $\deg_v(c)=1$.
\end{enumerate}
\end{lemma}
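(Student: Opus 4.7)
The plan is to derive parts~(c) and~(b) as essentially immediate consequences of Lemma~\ref{lemma_p26}(c), and to establish part~(a) via a flow-conservation argument at the relevant endpoint of $a$. The underlying observation that I will use throughout is that equivalences~1 and~2 preserve multidegree, so $\mdeg(h)=\mdeg(c)+\mdeg(f)$; in particular $\deg_b(h)\geq\deg_b(c)$ for every $b\in\Arr{\Q}$, and likewise $\deg_v(h)\geq\deg_v(c)$ for every $v\in\Ver{\Q}$.

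First I would dispatch part~(c) by contrapositive: if $v\in\Ver{c_3c_1}$ satisfies $\deg_v(c)\geq2$, then Lemma~\ref{lemma_p26}(c) forces $\deg_v(c)=\deg_v(h)$, which is precisely the condition $v\in S_{ver}$. Hence $v\notin S_{ver}$ implies $\deg_v(c)\leq 1$, and since $v$ is a vertex of the closed path $c$ we have $\deg_v(c)\geq 1$, giving equality.

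For part~(b), I may assume without loss of generality that $a'\notin S_{ver}$ (the case $a''\notin S_{ver}$ is symmetric). Then $\deg_{a'}(h)>\deg_{a'}(c)$, so applying the contrapositive of Lemma~\ref{lemma_p26}(c) at the vertex $a'$ (which lies in $\Ver{c}$ because $a\in\Arr{c_3c_1}\subset\Arr{c}$) yields $\deg_{a'}(c)=1$. Since $c$ is closed, $\deg_{a'}(c)$ equals the total number of arrow-occurrences in $c$ whose head is $a'$; as $a$ is one such arrow with $\deg_a(c)\geq 1$, this forces $\deg_a(c)=1$.

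For part~(a), assume $a'\in S_{ver}$, so $\deg_{a'}(h)=\deg_{a'}(c)$. Because $h$ and $c$ are closed paths, the head-count and tail-count at every vertex coincide and equal $\deg_v$, so
\[
\deg_{a'}(h)=\sum_{b\in\Arr{\Q},\,b'=a'}\deg_b(h),\qquad \deg_{a'}(c)=\sum_{b\in\Arr{\Q},\,b'=a'}\deg_b(c).
\]
Each summand on the left dominates its counterpart on the right, yet the two totals agree; consequently term-by-term equality holds, and specializing to $b=a$ gives $\deg_a(h)=\deg_a(c)$, i.e., $a\in S_{arr}$. The case $a''\in S_{ver}$ is handled identically, with the incoming sums replaced by the outgoing sums $\sum_{b''=a''}\deg_b(\cdot)$. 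There is no substantive obstacle in the argument; the one point that deserves care is the reinterpretation of $\deg_v$ on a closed path as an arrow-sum at $v$, which rests on the conservation of flow at each vertex.
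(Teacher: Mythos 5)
Your proof is correct and follows the paper's own route: part~c) from Lemma~\ref{lemma_p26}(c), part~b) as a consequence of part~c), and part~a) by the flow-conservation observation that the paper dismisses as ``trivial'' (your term-by-term equality argument, using $\mdeg(h)=\mdeg(c)+\mdeg(f)$, is exactly the content of that remark). Nothing to correct.
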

\begin{proof} Part~a) is trivial. Part~c) follows from part~c) of Lemma~\ref{lemma_p26}.
If $a'\not\in S_{ver}$, then $\deg_{a'}(c)=1$ by part~c); hence $\deg_a(c)=1$ and part~b)
is proven.
\end{proof}

The following lemma will help us to perform the induction step in the proof of
Theorem~\ref{theo_part3}.

\begin{lemma}\label{lemma_p28}
We assume that for every strongly connected quiver $\QG$ with $\#\Arr{\QG}<d$ the
assertion of Theorem~\ref{theo_part3} is valid. Then $\deg(c)\leq
m(\#S_{arr}-\#S_{ver}+1)+2\#S_{ver}$.
\end{lemma}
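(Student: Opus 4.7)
The plan is to proceed by induction on $d=\#\Arr{\Q}$: assuming Theorem~\ref{theo_part3} for strongly connected quivers with strictly fewer than $d$ arrows, we will produce a strongly connected quiver $\QG$ with $\#\Arr{\QG}<d$ and a closed path $\tilde c$ in $\QG$ satisfying $\tilde c\not\equiv 0$, then invoke the induction hypothesis and translate the resulting bound back to $c$.

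The construction of $\QG$ exploits two structural features of $c$ identified in Lemmas~\ref{lemma_p26} and~\ref{lemma_p27}. First, by part~b) of Lemma~\ref{lemma_p26}, the inner subpath $b_1c_2b_2$ has intermediate vertices disjoint from $\Ver{c_3c_1}$ apart from its endpoints $u_1=b_1''$ and $u_4=b_2'$, so I replace it with a single new arrow $y$ from $u_1$ to $u_4$. Second, by parts~b) and~c) of Lemma~\ref{lemma_p27}, every arrow in $\Arr{c_3c_1}\setminus S_{arr}$ and every vertex in $\Ver{c_3c_1}\setminus S_{ver}$ appears in $c$ with multiplicity~$1$; I contract those arrows, merging their endpoints, without creating doubled entries in $c$. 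The resulting $\QG$ is strongly connected with $\#\Ver{\QG}=\#S_{ver}$ and $\#\Arr{\QG}=\#S_{arr}+2$, and $\#\Arr{\QG}<d$ since the arrows of $c_2\cup\{b_1,b_2\}$ have been replaced by the single arrow $y$. The path $c$ descends to a closed path $\tilde c$ in $\QG$.

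The decisive step is to show $\tilde c\not\equiv 0$: if $\tilde c\equiv a^2\tilde f$ in $\QG$, the equivalence would lift through the contractions to an equivalence for $h=cf$ in $\Q$, contradicting $h\not\equiv 0$. The lift is possible because $\deg_y(\tilde c)=1$ (so $a\neq y$, mirroring $\deg_{b_1}(h)=\deg_{b_2}(h)=1$) and the contracted arrows have $c$-multiplicity~$1$ (so they cannot contribute to a spurious squared subpath either). Once $\tilde c\not\equiv 0$ is established, the induction hypothesis for $\QG$ reads
$$\deg(\tilde c)\leq m(\#\Arr{\QG}-\#\Ver{\QG}-1)+2\#\Ver{\QG}=m(\#S_{arr}-\#S_{ver}+1)+2\#S_{ver},$$
and a final degree comparison—recovering $\deg(c)$ from $\deg(\tilde c)$ via the contributions of the contracted multiplicity-$1$ pieces and the $b_1c_2b_2\to y$ substitution, which are simultaneously absorbed into the arrow-minus-vertex count of $\QG$—yields the claim.

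The main obstacle is the lift of a hypothetical $a^2$-equivalence from $\QG$ back to $\Q$ in the proof that $\tilde c\not\equiv 0$, together with the precise arithmetic bookkeeping that matches the degree lost to contraction against the reduction from $(d,n)$ to $(\#S_{arr}+2,\#S_{ver})$.
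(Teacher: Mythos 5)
Your overall strategy (contract parts of $c$ to get a smaller strongly connected quiver and invoke the inductive form of Theorem~\ref{theo_part3}) is the same as the paper's, but the specific construction does not close arithmetically, and you have flagged exactly the step that fails without resolving it. With $\#\Ver{\QG}=\#S_{ver}$ and $\#\Arr{\QG}=\#S_{arr}+2$, the induction hypothesis gives $\deg(\tilde c)\leq m(\#S_{arr}-\#S_{ver}+1)+2\#S_{ver}$, i.e.\ \emph{exactly} the target bound --- but for $\deg(\tilde c)$, not $\deg(c)$. Since $\deg(c)=\deg(\tilde c)+\#\bigl(\Arr{c_3c_1}\backslash S_{arr}\bigr)+\deg(c_2)+1$, and the contracted arrows inside $c_1$ and $c_3$ can be numerous (each run between consecutive $S_{ver}$-vertices can have length close to $m$, and there can be many runs), there is no slack left to absorb this difference: the ``final degree comparison'' you defer cannot succeed. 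The reason the paper's version works is a finer count: it treats the two segments $e_2\subset c_1$ and $g_2\subset c_3$ (between the extreme $S_{ver}$-vertices) with two \emph{separate} auxiliary quivers, and it contracts each \emph{entire run} $x_i\cdots x_j$ between consecutive $S_{ver}$-heads into one arrow. By Lemma~\ref{lemma_p27}(a) the bounding arrows $x_i,x_j$ of such a run lie in $S_{arr}$, so each contraction consumes at least two elements of $S_{arr}^1$ while producing one arrow, giving $\#\Arr{\QG}\leq\#S_{arr}^1-l$; the resulting surplus $ml$ exactly dominates the $l(m-1)$ of degree regained when the runs are re-expanded (each run has length $\leq m$ by Lemma~\ref{lemma_p27}(b),(c)). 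Contracting only the arrows \emph{outside} $S_{arr}$, as you do, generates no such surplus. Moreover, the leftover pieces $f_1=g_3e_1$ and $f_2=e_3b_1c_2b_2g_1$ are not fed into the induction at all: they concatenate (with connecting paths) into a primitive closed path, so $\deg(f_1)+\deg(f_2)\leq m$, and this is the source of the ``$+1$'' in the statement --- in your version that ``$+1$'' is already spent on the extra arrow $y$, leaving nothing for $b_1c_2b_2$.

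Two further points. First, the case $S_{ver}=\emptyset$ must be handled separately ($c$ is then primitive by Lemma~\ref{lemma_p26}(c),(d), so $\deg(c)\leq m$); your construction degenerates there. Second, your non-vanishing claim for $\tilde c$ is asserted in the wrong direction: merging vertices creates new incidences among closed paths, hence new applications of the defining relations of $\equiv$, so an equivalence $\tilde c\equiv a^2\tilde f$ in the quotient quiver need not lift to $\Q$, and conversely $h\not\equiv0$ does not obviously push forward. The paper avoids this by working with a sub\emph{path} $e_2$ of $c$ closed up by a single fresh arrow $z$ (so that $yz\not\equiv0$ can be deduced from $h\not\equiv0$ together with $\deg_z(y)=0$ and the multiplicity-one property of the contracted runs); some version of that more controlled setup would be needed to make your non-vanishing step rigorous.
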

\begin{proof}
Let $\#S_{ver}=\emptyset$. Then $c$ is a primitive closed path by parts~c) and~d) of
Lemma~\ref{lemma_p26}. Hence $\deg(c)\leq m$.

Let $\#S_{ver}\geq1$. Using the fact that $v_0\not\in S_{ver}$ and part~b) of
Lemma~\ref{lemma_p26} we obtain
\begin{eq}\label{eq_p28_star1}
S_{ver}=S_{ver}^1\sqcup S_{ver}^3 \text{ and } S_{arr}=S_{arr}^1\sqcup S_{arr}^3
\end{eq}
for $S_{arr}^1=S_{arr}\cap\Arr{c_1}$, $S_{arr}^3=S_{arr}\cap\Arr{c_3}$,
$S_{ver}^1=S_{ver}\cap\Ver{c_1}$, and $S_{ver}^3=S_{ver}\cap\Ver{c_3}$.

Suppose $\#S_{ver}^1\geq1$. We assume that $c_1=x_1\cdots x_s$ for
$x_1,\ldots,x_s\in\Arr{\Q}$, where for $i\neq j$ we can have $x_i=x_j$. We define
$p=\min\{1\leq k\leq s\,|\,x_k'\in S_{ver}^1\}$ and $q=\max\{1\leq k\leq s\,|\,x_k'\in
S_{ver}^1\}$. Then $c_1=e_1e_2e_3$, where paths $e_1=x_1\cdots x_p$, $e_2=x_{p+1}\cdots
x_q$, and $e_3=x_{q+1}\cdots x_s$ can be empty. We claim that
\begin{eq}\label{eq_p28_star2}
\deg(e_2)\leq m(\#S_{arr}^1-\#S_{ver}^1)+2\#S_{ver}^1.
\end{eq}
To prove the claim we consider the $e_2$-restriction of $\Q$ to $S_{ver}^1$, add a new
arrow $z$ from $e_2'$ to $e_2''$, and denote the resulting quiver by $\QG$.
In other words, $\Ver{\QG}=S_{ver}^1$ and $a\in\Arr{\QG}$ has one of the
following types:
\begin{enumerate}
\item[1.] $a=\widetilde{x_i}$, where $1\leq i\leq s$ and $x'_i,x''_i\in S_{ver}^1$;

\item[2.] $a=\widetilde{x_i\cdots x_j}$ for $1\leq i<j\leq s$, where $x''_i,x'_j\in
S_{ver}$ and $x'_i,\ldots,x'_{j-1}\not\in S_{ver}$;

\item[3.] $a=z$.
\end{enumerate}
Note that for an arrow $a=\widetilde{x_i}$ of type 1 we have $x_i\in S_{arr}$ by part~a)
of Lemma~\ref{lemma_p27} and we say that $x_i$ is assigned to $a$. Similarly, for an
arrow $a=\widetilde{x_i\cdots x_j}$ of type 2 we have $x_i,x_j\in S_{arr}$ and we say
that $x_i,x_j$ are assigned to $a$; moreover,
\begin{enumerate}
\item[a)] $\deg_{x_k}(e_2)=1$ for any $i\leq k\leq j$ (see part~b) of
Lemma~\ref{lemma_p27}).

\item[b)] $\deg_{x'_k}(c)=\deg_{x'_k}(e_2)=1$ for any $i\leq k\leq j-1$ (see part~c) of
Lemma~\ref{lemma_p27}). In particular, $x_i\cdots x_j$ is either a primitive closed path
or it is a subpath of $c$ without self-intersections; thus, $\deg(x_i\cdots x_j)\leq m$.
\end{enumerate}

Let $y$ be the unique path in $\QG$ that corresponds to the path $e_2$ in $\Q$. The
quiver $\QG$ is strongly connected, since $yz$ is a closed path in $\QG$ that contains
all arrows and all vertices of $\QG$. Moreover, we have $yz\not\equiv0$, since
$\deg_a(y)=1$ for every arrow $a$ of type 2, $\deg_z(y)=0$, and $h\not\equiv0$.

For every arrow $a$ of type 1 there is an arrow from $S_{arr}^1$ that is assigned to $a$;
and for every arrow $b$ of type 2 there are two arrows from $S_{arr}^1$ that are assigned
to $b$. But the arrow $x_p\in S_{arr}^1$ is not assigned to any arrow of $\QG$. Therefore,
$$\#\Arr{\QG}-1\leq \#S_{arr}^1-l-1,$$
where $l$ stands for the number of arrows of type 2. Since $b_1,b_2\not\in S_{arr}^1$,
it follows that %
$\#\Arr{\QG}\leq \#S_{arr}^1 <\Arr{\Q}=d$. Applying Theorem~\ref{theo_part3} to $\QG$, we
obtain
$$\deg(yz)\leq m(\QG)(\#\Arr{\QG}-\#\Ver{\QG})+2\#\Ver{\QG}.$$
It is not difficult to see that $m(\QG)\leq m$. Thus
$$\deg(y)\leq m(\#S_{arr}^1-\#S_{ver}^1-l)+2\#S_{ver}^1 - 1.$$
By property~b) of paths of type 2, we have
$$\deg(e_2)\leq \deg(y)+l(m-1).$$
The last two formulas conclude the proof of~\Ref{eq_p28_star2}.

\smallskip
If $\#S_{ver}^3\geq1$, then we rewrite $c_3$ in a form $c_3=g_1g_2g_3$ in the same way as
we have done for $c_1=e_1e_2e_3$. Then the proof of the formula
\begin{eq}\label{eq_p30_star9}
\deg(g_2)\leq m(\#S_{arr}^3-\#S_{ver}^3)+2\#S_{ver}^3
\end{eq}
is similar to the proof of~\Ref{eq_p28_star2}.

Suppose $S_{ver}^1\neq\emptyset$ and $S_{ver}^3\neq\emptyset$. Then
$$\deg(c)=\deg(c_1b_1c_2b_2c_3)=\deg(e_2)+\deg(g_2)+\deg(f_1)+\deg(f_2),$$
where $f_1=g_3e_1$ and $f_2=e_3b_1c_2b_2g_1$. Parts~c) and~d) of Lemma~\ref{lemma_p26}
imply that
\begin{enumerate}
\item[a)] for every $v\in (\Ver{f_1}\cup\Ver{f_2})\backslash\{f'_1,f''_1,f'_2,f''_2\}$ we
have $\deg_v(c)=1$;

\item[b)] $(\Ver{f_1}\cup\Ver{f_2}) \cap
(\Ver{e_2}\cup\Ver{g_2})=\{f'_1,f''_1,f'_2,f''_2\}$.
\end{enumerate}
It follows that there are paths $d_1,d_2$ in $\Q$ such that $f_1d_1f_2d_2$ is a primitive
closed path in $\Q$. In particular, $\deg(f_1)+\deg(f_2)\leq m$.
Formulas~\Ref{eq_p28_star2} and~\Ref{eq_p30_star9} conclude the proof of the lemma.

The cases $S_{ver}^1\neq\emptyset,S_{ver}^3=\emptyset$ and
$S_{ver}^1=\emptyset,S_{ver}^3\neq\emptyset$ can be treated in the similar fashion. If
$S_{ver}^1=\emptyset$ and $S_{ver}^3=\emptyset$, then $S_{ver}=\emptyset$; a
contradiction.
\end{proof}

\bigskip
\begin{proof_theo_part3}
Suppose $\Q_{\mdeg(h)}\in\Q(n_0,d_0,m_0)$ for some $n_0,d_0,m_0$. We assume that
the theorem is proven for the case $\Q=\Q_{\mdeg(h)}$. Then we have $\deg(h)\leq
m_0(d_0-n_0-1)+2n_0$. Lemma~\ref{lemma_subquiver} implies
$$\deg(h)\leq m(d_0-n_0)+2n_0-m_0\leq m(d-n-1)+2n-m_0$$
and we obtain the required upper bound. Therefore, without loss of generality we can
assume that $\Q=\Q_{\mdeg(h)}$.

We prove the theorem by induction on $\#\Arr{\Q}$.

\smallskip
\textbf{Induction base.} If $\#\Arr{\Q}=1$, then $\Ver{\Q}=\{v\}$ and the only arrow of
$\Q$ is a loop in $v$. Then $\deg(h)=1$ and the required upper bound on $\deg(h)$ holds.

\smallskip
\textbf{Induction step.} If $\mdeg\in\OmegaZwei(\Q)$, then see Theorem~\ref{theo_part4};
otherwise we apply Lemma~\ref{lemma_p26} to $h$ and obtain $h\equiv cf$, where $f$ and
$c=c_1b_1c_2b_2c_3$ are closed paths in some vertex $v_0$. By Lemma~\ref{lemma_p28}, we
have
\begin{eq}\label{eq_p32_1}
\deg(c)\leq m(\#S_{arr}-\#S_{ver}+1)+2\#S_{ver}.
\end{eq}
We define the quiver $\QG$ by $\Ver{\QG}=\{v\in\Ver{\Q}\,|\,\deg_v(h)>\deg_v(c)\}$ and
$\Arr{\QG}=\{a\in\Arr{\Q}\,|\,\deg_a(h)>\deg_a(c)\}\bigcup \{x\}$, where $x$ is a new
loop in the vertex $v_0$. Then $xf$ is a closed path in $\QG$ that contains all vertices
and arrows of $\QG$. In particular, $\QG$ is a strongly connected quiver and
$\QG=\QG_{\mdeg(xf)}$. Since $cf\not\equiv0$, we have $xf\not\equiv0$. By parts~a) and~d)
of Lemma~\ref{lemma_p26}, we have $\#\Arr{\QG}\leq d-\#S_{arr}-1$ and
$\Ver{\QG}=n-\#S_{ver}$. Applying induction hypothesis to the closed path $xf$ in $\QG$
and using the inequalities $\Arr{\QG}>\Ver{\QG}$ and $m(\QG)\leq m$, we obtain
$$\deg(xf)\leq m(d-n-1)+2n-m(\#S_{arr}-\#S_{ver}+1)-2\#S_{ver}.$$
Formula~\Ref{eq_p32_1} implies the required upper bound on $\deg(h)$.
\end{proof_theo_part3}

\section{The upper bound for the case of small $d$}\label{section_part6}

Assume that $\Char(K)=2$. The following lemma is a stronger version of Lemma~\ref{lemma_pA291}.

\begin{lemma}\label{lemma_pA292_pA293}
Suppose $\Q\in\Q(n,d,m)$. Then using the notation of Lemma~\ref{lemma_pA291} we
have $r\geq1$.
\end{lemma}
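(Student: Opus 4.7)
The plan is to argue by contradiction: suppose $r=0$ in the decomposition from Lemma~\ref{lemma_pA291} and deduce $h\equiv 0$, contradicting the hypothesis. If $r=0$, then $\mdeg(h)=2\sum_{k=1}^t\mdeg(c_k)$, so every arrow of $\Q_{\mdeg(h)}$ has $\deg_a(h)\geq 2$ and $\deg_a(h)$ even. Exactly as in the proof of Lemma~\ref{lemma_pA291}, Lemma~\ref{lemma_subquiver} lets me restrict to $\Q=\Q_{\mdeg(h)}$, so I may assume $\Q$ is strongly connected with every arrow having positive even multiplicity in $h$.

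The first step is to rule out loops. If $p\in\Arr{\Q}$ is a loop at some vertex $v$, then $\deg_p(h)\geq 2$ and Lemma~\ref{lemma_L0} gives $h\equiv p^k b$ for some $k\geq 2$ and $b\in\path(\Q)$ with $\deg_p(b)=0$; but then $h\equiv p^2\cdot p^{k-2}b\equiv 0$ by the equivalence $a_1^2a_2\equiv 0$, contradicting $h\not\equiv 0$. Hence $\Q$ is loop-free and every primitive closed path in $\Q$ has length at least $2$.

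Next I would induct on $\#\Arr{\Q}$ to show that every closed path $h$ in a loop-free strongly connected quiver with $\Q=\Q_{\mdeg(h)}$ and all arrow multiplicities even satisfies $h\equiv 0$. Choose a primitive closed path $c_1=c_{1,1}\cdots c_{1,s}$ in $\Q$ with $s\geq 2$. If $\Arr{c_1}=\Arr{\Q}$, then $\Q$ is a single cycle, every closed path is a power of $c_1$ up to $\sim$, and the even-multiplicity condition forces $h\sim c_1^{2K}$ for some $K\geq 1$; hence $h\equiv (c_1^K)^2\equiv 0$ by the rule $a_1^2\equiv 0$. Otherwise I would apply Lemma~\ref{lemma_L4} using an arrow $b\notin\Arr{c_1}$ with endpoints in $\{v_2,v_k\}$ to obtain $h\equiv c_{1,1}c_{1,2}f_1\cdot c_{1,1}c_{1,2}f_2$, and then collapse the subpath $c_{1,1}c_{1,2}$ to a single new arrow, producing a strictly smaller strongly connected quiver $\QG$ in which the image of $h$ still has all arrow multiplicities even (the new arrow inherits the multiplicity $\deg_{c_{1,1}}(h)=\deg_{c_{1,2}}(h)$, which is even). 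The induction hypothesis applied inside $\QG$ then yields $h\equiv 0$, lifted back to $\Q$.

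The main obstacle is verifying the hypothesis of Lemma~\ref{lemma_L4}, namely the existence of an arrow $b\neq c_{1,i}$ with $b'\neq b''$ and $b',b''\in\{v_2,v_k\}$. If no such $b$ exists for any primitive $c_1$, I would instead apply Lemma~\ref{lemma_aaa} to extract one copy of $c_1$, writing $h\equiv c_1\cdot g$, and then combine this with the semi-good component analysis of Lemma~\ref{lemma_partII_main2} to locate a recurring subpath of $h$ that can be extracted as a square, once again reducing to the inductive situation. Tracking arrow multiplicities carefully through the collapse operation so as to preserve the all-even-multiplicities invariant will be the trickiest bookkeeping in the argument.
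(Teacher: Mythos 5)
Your reduction to $\Q=\Q_{\mdeg(h)}$ with every arrow of even multiplicity $\geq 2$, the elimination of loops via Lemma~\ref{lemma_L0}, and the single-cycle case are all correct and match the spirit of the paper. The problem is that everything after that --- which is the actual content of the lemma --- is left as a plan with an acknowledged hole exactly where the difficulty sits. The hypothesis of Lemma~\ref{lemma_L4} (an arrow $b\notin\Arr{c_1}$ with $b'\neq b''$ and both endpoints on $\Ver{c_1}$) fails already in the most basic configurations, e.g.\ two primitive cycles sharing a single vertex, or a chain of such cycles; this is the generic situation, not an exceptional one. Your fallback (``apply Lemma~\ref{lemma_aaa}, then combine with the semi-good component analysis of Lemma~\ref{lemma_partII_main2} to locate a recurring subpath extractable as a square'') is not an argument: Lemma~\ref{lemma_partII_main2} requires a distinguished primitive cycle $a$ with $\deg_{a_i}(h)\geq2$ and, to be useful, one for which $\mdeg(h)-2\mdeg(a)$ is \emph{indecomposable} --- and such an $a$ exists only when $\mdeg(h)\notin\OmegaZwei(\Q)$. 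That is precisely the case split the paper makes and you do not. When $\mdeg(h)\notin\OmegaZwei(\Q)$, the paper picks such an $a$, uses the good-component structure to iterate Lemma~\ref{lemma_L4} inside the $h$-restriction to $\Ver{a}$ (where every relevant subpath \emph{does} become an arrow with both endpoints on $\Ver{a}$), and gets $h\equiv af_1af_2\equiv a^2f_1f_2\equiv0$. When $\mdeg(h)\in\OmegaZwei(\Q)$, no direct computation $h\equiv0$ is possible --- by Remark~\ref{remark_inclusions} such an $h$ would satisfy $h\not\equiv0$ --- and the paper instead derives a purely combinatorial contradiction from the $\mdeg(h)$-tree: a leaf $v$ has $\un{\de}^{(v)}\in\OmegaDrei(\Q_{\un{\de}^{(v)}})$, yet $\un{\de}^{(v)}$ is all-even with nonempty strongly connected support, so any primitive cycle in that support is a $\un{\de}^{(v)}$-double path. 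Your proposal contains nothing that covers this case.

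There is also a smaller but real gap in the collapse step. Lemma~\ref{lemma_L4} only asserts $h\equiv c_{1,1}c_{1,2}f_1\,c_{1,1}c_{1,2}f_2$ for \emph{some} $f_1,f_2\in\path(\Q)$; it does not assert $\deg_{c_{1,1}}(f_i)=0$. If $\deg_{c_{1,1}}(h)>2$, the words $f_1,f_2$ may still contain $c_{1,1}$ and $c_{1,2}$ separately, so replacing $c_{1,1}c_{1,2}$ by a single new arrow is not well defined and the ``inherited multiplicity'' claim is unjustified; one needs the additional normalization (as in the proof of Lemma~\ref{lemma_p14}, via $\degII{v_2}{f_i}=0$) before collapsing. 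In short: the strategy of deducing $h\equiv0$ from $r=0$ is the right one and your easy cases are fine, but the two mechanisms the paper actually uses --- the good-component/restriction argument for $\mdeg(h)\notin\OmegaZwei(\Q)$ and the $\un{\de}$-tree contradiction for $\mdeg(h)\in\OmegaZwei(\Q)$ --- are both missing, and without them the induction does not close.
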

\begin{proof} Suppose $r=0$. %
Then $\mdeg(h)=2\sum_{j=1}^t\mdeg(c_j)$, where $t\geq1$, and we have two possibilities.

\textbf{1.} Let $\mdeg(h)\not\in\OmegaZwei(\Q)$. Then there exists a primitive closed
path $a=a_1\cdots a_s$ in $\Q$ ($a_1,\ldots,a_s\in\Arr{\Q}$) such that
$\mdeg(h)-2\mdeg(a)$ is indecomposable and $\deg_{a_i}(h)\geq2$ for all $i$. It is not
difficult to see that $\Ver{a}=I\sqcup J$, where
\begin{enumerate}
\item[1)] $\deg_v(h)=2$ for all $v\in I$;

\item[2)] for every $u,v\in J$ with $u\neq v$ there is a path $g$ in $\Q$ from $u$ to
$v$; moreover, for every $e\in \Arr{g}$ we have $\deg_{e}(h)\geq2$, if $e\not\in
\Arr{a}$; and $\deg_{e}(h)\geq4$, if $e\in \Arr{a}$. Lemma~\ref{lemma_aaa} implies that
$h\equiv gf$ for some path $f$.
\end{enumerate}
If $s>1$, then, applying Lemma~\ref{lemma_L4}, we have $h\equiv a_1a_2f_1a_1a_2f_2\equiv
a_1a_2a_3f_3a_1a_2a_3f_4\equiv\cdots\equiv a f_{2s-3} a f_{2s-2}$ for some paths
$f_1,\ldots,f_{2s-2}$. Lemma~\ref{lemma_L0} gives $h\equiv0$ for $s\geq1$; a
contradiction.

\textbf{2.} If $\mdeg(h)\in\OmegaZwei(\Q)$, then we consider a $\mdeg(h)$-tree
$(\QT,\un{\de}^{(v)},A_v\,|\,v\in\Ver{\QT})$ constructed in Section~\ref{section_part4}.
For a leaf $v\in\Ver{\QT}$ we have $\un{\de}^{(v)}\in\OmegaDrei(\Q_{\un{\de}^{(v)}})$; a
contradiction.
\end{proof}

\begin{theo}\label{theo_part6}
Suppose $\Q\in\Q(n,d,m)$, $h$ is a closed path in $\Q$, and $h\not\equiv0$. Then
$\deg(h)\leq 2m(d-n)+m$.
\end{theo}
\begin{proof}
Using the notation of Lemma~\ref{lemma_pA291} we have $\deg{h}\leq m(r+2t)$ and $r+t\leq
d-n+1$. Lemma~\ref{lemma_pA292_pA293} implies
$$r+2t\leq 2r-1+2t\leq 2(d-n)+1$$
and we obtain the required upper bound.
\end{proof}

\section{Examples}\label{section_example}

\begin{lemma}\label{lemma_example}
Suppose $\Q(n,d,m)\neq\emptyset$. Then there is a $\Q\in\Q(n,d,m)$ and
a closed path $h$ in $\Q$ such that $h\not\equiv0$ and
\begin{enumerate}
\item[1)] $\deg(h)\geq M(n,d,m)-m$, if $\Char(K)=2$;

\item[2)] $\deg(h)= M(n,d,m)$, if $\Char(K)\neq2$, $d\geq
n+2\left[\frac{n-1}{m}\right]+m$ or $n=m$;
\end{enumerate}
where the definition of $M(n,d,m)$ was given in Section~\ref{section_intro}.
\end{lemma}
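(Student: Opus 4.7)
The plan is to prove the lemma by exhibiting, in each case appearing in the definition of $M(n,d,m)$, an explicit quiver $\Q \in \Q(n,d,m)$ together with a closed path $h$ in $\Q$ that realizes (or almost realizes) the required degree, and then verifying $h \not\equiv 0$. The principal tool for the non-equivalence will be Remark~\ref{remark_inclusions_new}: it suffices to check, for every $\mdeg(h)$-double primitive closed path $a$ in $\Q$, that the support of $\mdeg(h) - 2\mdeg(a)$ is empty or fails to be strongly connected.

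First I would dispose of the characteristic $\neq 2$ case. When $n = m$ the quiver $\Q$ consists of the primitive $n$-cycle $a = a_1 \cdots a_n$ alone (for $d = n$), or of this cycle together with one additional chord attached at the unique available vertex (for $d = n+1$, as in the picture in the proof of Lemma~\ref{lemma_char_0}), and I set $h = a^2$. In characteristic $\neq 2$ the only zero-producing rules are $a_1^2 a_2 \equiv 0$ (requiring three incident closed-path factors) and $a_1 a_2 a_3 a_4 \equiv 0$ (requiring four), while $h = a \cdot a$ admits only the two-factor decomposition into incident closed paths; hence $h \not\equiv 0$ and $\deg(h) = 2n = M$. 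In the remaining ``otherwise'' range, where $d \geq n + 2[(n-1)/m] + m$, I would construct $\Q$ as a union of primitive $m$-cycles arranged so that a closed path $h$ of the form ``traverse the whole quiver three times'' can be produced with $\deg_v(h) = 3$ for every vertex $v$; a direct check using Remark~\ref{remark_inclusions_new} (the removal of a doubled $m$-cycle always disconnects the support of $\mdeg(h)$) gives $h \not\equiv 0$ and $\deg(h) = 3n = M$.

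Next, for $\Char(K) = 2$, write $n-1 = l(m-1) + r$ with $l \geq 1$ and $0 \leq r \leq m-2$, as in Lemma~\ref{lemma_criterion_for_QuiverNull}. The backbone quiver $\Q_0$ is the ``bouquet'' consisting of $l$ primitive $m$-cycles sharing a common vertex $v_0$, together with (if $r > 0$) one extra primitive cycle of length $r+1$ through $v_0$; this has $n$ vertices and $n + l - \delta(0,r)$ arrows, and $m(\Q_0) = m$. For the subcase $n = m = d$ I just take $h = a$ itself ($\deg(h) = m = M - m$). In the main ``otherwise'' subcase I enlarge $\Q_0$ to $\Q$ by adding $d - n - l + \delta(0,r)$ further arrows forming short primitive cycles at $v_0$, and construct $h$ by interleaving traversals so that every primitive $m$-cycle of $\Q$ is used twice and each of the additional short cycles is used once. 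With the interleaving done carefully (between two traversals of one $m$-cycle the path $h$ must leave $v_0$ along a different cycle), the support of $\mdeg(h) - 2\mdeg(a)$ for each $\mdeg(h)$-double cycle $a$ fails to be strongly connected, so Remark~\ref{remark_inclusions_new} gives $h \not\equiv 0$ and direct counting gives $\deg(h) = m(d-n-1) + 2n - m = M - m$. The small-$d$ subcase $d < n + 2[(n-1)/m]$, $n > m \geq 2$, is handled similarly but one can only afford $t = d - n$ doubled cycles (in the notation of Lemma~\ref{lemma_pA291}), yielding $\deg(h) = m(2d - 2n + 1) - m = M - m$.

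The main obstacle is the bookkeeping in the characteristic $2$ constructions: one must align the number $l$ of available $m$-cycles, the exact value of $d$, and the interleaving pattern for $h$ so that both $\deg(h)$ meets the target and the disconnectedness hypothesis of Remark~\ref{remark_inclusions_new} is verified uniformly across $\mdeg(h)$-double paths. The structure of $M(n,d,m)$ essentially dictates which construction is optimal, since it is already the maximum of the quantity $m(r + 2t)$ produced by Lemma~\ref{lemma_pA291} under the arrow/vertex constraints imposed by Lemma~\ref{lemma_subquiver}; once the right pair $(r,t)$ is identified for each subcase, the quiver and path fall out of the ``bouquet'' construction above.
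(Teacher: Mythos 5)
Your overall strategy (explicit extremal quivers plus a multidegree argument via Remark~\ref{remark_inclusions_new}) is the paper's strategy, and your $n=m$ cases match the paper's. But the central characteristic-$2$ construction is broken. In a bouquet where all primitive $m$-cycles share the single vertex $v_0$, the support of $\mdeg(h)-2\mdeg(a)$ for a doubled $m$-cycle $a$ is the union of the remaining cycles, all of which still pass through $v_0$; that union is strongly connected, so the hypothesis of Remark~\ref{remark_inclusions_new} fails. No ``careful interleaving'' can repair this: the support of $\mdeg(h)-2\mdeg(a)$ depends only on the multidegree vector, not on the order in which $h$ traverses the arrows. Worse, the conclusion is actually false for your $h$: with $a$ a doubled $m$-cycle, every good subpath $b$ with respect to $a$ has $b'=b''=v_0$, so there is exactly one good component, and Lemma~\ref{lemma_p14} then forces $h\equiv a^2f\equiv 0$. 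This is precisely why the paper's parts c) and d) use a \emph{chain} of rhombi glued pairwise at single vertices: each doubled cycle is a cut in the support, so its removal disconnects the left and right halves of the chain and Remark~\ref{remark_inclusions_new} (via membership in $\OmegaZwei(\Q)$) applies. The geometry of where the cycles are attached is the whole content of the construction, and a bouquet does not have it.

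The second gap is in characteristic $\neq 2$ for $n>m$. Remark~\ref{remark_inclusions_new} is stated and proved under the standing hypothesis $\Char(K)=2$ of Section~3; its proof rests on the fact that in characteristic $2$ the only route to $h\equiv 0$ is $h\equiv a^2f$. In characteristic $\neq2$ the relation $a_1a_2a_3a_4\equiv0$ (and the sign in rule~2) provides routes to $0$ that involve no squared factor, so the remark cannot be invoked to certify $h\not\equiv0$ for your degree-$3n$ path. The paper handles exactly this point with a separate, nontrivial argument (Lemma~\ref{lemma_indecomposable}): an explicit substitution of the matrices $I$ and $J$ with $\tr(I)=\tr(J)=\tr(IJ)=0$ reduces to a base quiver $\Q_0$, where a sign computation over $\Symm_2^{\,r+1}$ shows the trace is indecomposable. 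Your proposal contains no substitute for this step, and without it the claim $h\not\equiv0$ in part~2) of the lemma is unsupported.
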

\begin{proof} Suppose $\Char(K)=2$.

\smallskip
\textbf{a)} If $m=1$, then $n=1$. For the quiver $\Q$ with one vertex $v$ and loops
$a_1,\ldots,a_d$ in $v$ we have $h=a_1\cdots a_d\not\equiv0$ and $\deg(h)=d$.

\smallskip
\textbf{b)} If $m\geq2$ and $n=m$, then we consider the quiver $\Q\in\Q(n,d,m):$
$$
\begin{picture}(0,80)
\put(-45,35){%
\put(0,0){\vector(2,3){20}}%
\put(20,-30){\vector(-2,3){20}}%
\put(70,30){\vector(2,-3){20}}%
\put(90,0){\vector(-2,-3){20}}%
\put(45,-30){%
\put(0,0){\circle*{1}}%
\put(7,0){\circle*{1}}%
\put(-7,0){\circle*{1}}}%
\put(15,30){\xymatrix@C=1.65cm@R=1cm{ %
\ar@/^/@{->}[r]^{a_1} \ar@/_/@{->}[r]_{a_t}&\\
}}%
\put(44,30){\put(0,0){\circle*{1}}\put(0,2){\circle*{1}}\put(0,-2){\circle*{1}}}%
\put(100,-1){,}%
\put(-10,15){$\scriptstyle b_{n-1}$}%
\put(-10,-19){$\scriptstyle b_{n-2}$}%
\put(83,15){$\scriptstyle b_1$}%
\put(83,-19){$\scriptstyle b_2$}%
}%
\end{picture}$$%
where $t=d-n+1\geq1$. For $h=a_1b\cdots a_tb$, where $b=b_1\cdots b_{n-1}$, we have
$\deg(h)=tn$ and $h\not\equiv0$.

\smallskip
\textbf{c)} We assume that $d\geq n+2\left[\frac{n-1}{m}\right]$ and $n>m\geq 2$. Then
$n-1=lm+r$ for $l=\left[\frac{n-1}{m}\right]\geq1$ and $0\leq r\leq m-1$. Consider the
quiver $\Q\in\Q(n,d,m)$:
$$
\begin{picture}(0,70)
\put(-160,30){
\put(0,0){\rombL}%
\put(80,0){\rombC}%
\put(200,0){\rombC}%
\put(280,0){\rombR}%
\put(180,0){%
\put(0,0){\circle*{2}}%
\put(10,0){\circle*{2}}%
\put(-10,0){\circle*{2}}}%
\put(340,-1){,}%
}%
\end{picture}
$$
where there are $t=d-n-2l+1\geq1$ arrows from $u$ to $v$, the right primitive closed path
contains $r+1$ arrows, any other primitive closed path contains $m$ arrows, and $s=t+2$. Define
$\un{\de}\in\OmegaNull(\Q)$ in such a way that if a number $k$ is assigned to an arrow
$a\in\Arr{\Q}$, then $\de_a=k$.  Since $\un{\de}\in\OmegaZwei(\Q)$, there is a
closed path $h$ in $\Q$ with $\mdeg(h)=\un{\de}$ and $h\not\equiv0$ by
Remark~\ref{remark_inclusions}. It is not difficult to see that
$\deg(h)=|\un{\de}|=m(d-n-1)+2n-(r+1)$.

\smallskip
\textbf{d)} We assume that $d< n+2\left[\frac{n-1}{m}\right]$ and $n>m\geq 2$. As above,
we have $n-1=lm+r$ for $l\geq1$ and $0\leq r\leq m-1$. Consider the quiver
$\Q\in\Q(n,d,m)$:
$$
\begin{picture}(0,85)
\put(-210,50){
\put(0,0){\rombSmall}%
\put(90,0){\rombSmall}%
\put(75,0){\put(0,0){\circle*{2}}\put(7,0){\circle*{2}}\put(-7,0){\circle*{2}}}%
\put(150,0){\rombSmallC}%
\put(270,0){\cycleSmall}%
\put(350,0){\cycleSmall}%
\put(335,0){\put(0,0){\circle*{2}}\put(7,0){\circle*{2}}\put(-7,0){\circle*{2}}}%
\put(410,-1){,}%
\put(17,-33){$\underbrace{\qquad\qquad\qquad\qquad\qquad}_{i\,{\rm times}}$}%
\put(270,-33){$\underbrace{\qquad\qquad\qquad\qquad\qquad\quad}_{j\,{\rm times}}$}%
}%
\end{picture}
$$
where every primitive closed path contains $m$ arrows, $i,j\geq0$, $1\leq t< m$, and
$$\begin{array}{ccl}
n&=&m(i+j+2)-j-t,\\
d&=&m(i+j+2)+2i-t+1.\\
\end{array}$$
It is not difficult to see that there exist $i,j,t$ satisfying the given conditions. We define
$\un{\de}\in\OmegaZwei(\Q)$ in a similar way as in part~c). Hence $|\un{\de}|=2m(2i+j+1)$
and $M(n,d,m)-|\un{\de}|=m$.

\smallskip
\textbf{e)} Suppose $\Char(K)\neq2$ and the condition from part~2) of the lemma holds. 

If $m=1$, then we construct the required $h$ similarly to part~a).

If $n=m\geq 2$, then we consider the quiver from part~b). We set $h=a_1 b a_1 b$ if $d\in\{n,n+1\}$ and $h=a_1 b a_2 b a_3 b$ if $d>n+1$. Obviously, $\deg(h)=M(n,d,m)$ and $h\not\equiv0$.
 
Let $n>m\geq2$. We define $l$ and $r$ in the same way as in part~c) and consider the quiver $\Q\in\Q(n,d,m)$: 
$$
\begin{picture}(0,80)
\put(-160,40){
\put(0,0){\rombCnull}%
\put(80,0){\rombCnull}%
\put(200,0){\rombCnull}%
\put(280,0){\rombRnull}%
\put(180,0){%
\put(0,0){\circle*{2}}%
\put(10,0){\circle*{2}}%
\put(-10,0){\circle*{2}}}%
\put(322,30){$\scriptstyle v_1$}%
\put(322,6){$\scriptstyle v_2$}%
\put(322,-9){$\scriptstyle v_{r-1}$}%
\put(322,-34){$\scriptstyle v_{r}$}%
\put(-6,0){$\scriptstyle u$}%
\put(20,-12){$\scriptstyle e$}%
\put(55,8){$\scriptstyle f$}%
\put(350,-1){.}%
}%
\end{picture}
$$
Here we assume that we have not depicted some loops in $\Q$. Namely, for $s=d-n-2l-r-1\geq0$ there are loops $a$, $b_1,\ldots,b_{s}$ in the vertex $u$ and loops $c_1,\ldots,c_{r}$ in vertices $v_1,\ldots,v_{r}$, respectively.  We assign number $1$ to loops $a$, $c_1,\ldots, c_{r}$ and number $0$ to $b_1,\ldots, b_{s}$. Define
$\un{\de}\in\OmegaNull(\Q)$ in such a way that if a number $k$ is assigned to an arrow
$x\in\Arr{\Q}$, then $\de_x=k$.  Let $h$ be a closed path in $\Q$ with $\mdeg(h)=\un{\de}$. Since $\deg_w(h)=3$ for all $w\in\Ver{\Q}$, we have $\deg(h)=3n$. Lemma~\ref{lemma_indecomposable} (see below) completes the proof.  
\end{proof}

Given a closed path $a=a_1\cdots a_s$ in $\Q$, where $a_i\in\Arr{\Q}$, we write $\tr(X_a)$ for $\tr(X_{a_s}\cdots X_{a_1})$. 

\begin{lemma}\label{lemma_indecomposable} Using notation from part~e) of the proof of Lemma~\ref{lemma_example}, we have $h\not\equiv0$. 
\end{lemma}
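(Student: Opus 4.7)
The plan is to exclude both the rule~3 and rule~4 reductions $h\equiv 0$ by combining a degree count with an analysis of the incident closed-path decompositions of $h$.

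First, a direct in/out balance at each vertex shows $\deg_w(h)=3$ for every $w\in\Ver{\Q}$. Since rules~1 and~2 preserve $\deg_w$, any closed path equivalent to $h$ still has $\deg_w\le 3$ everywhere, which forbids rule~4 (four incident closed paths at a vertex would force $\deg_w\ge 4$ there). So $h\equiv 0$ could only come from rule~3, i.e.~from some rewriting $h\equiv a_1^2a_2f$ with $a_1,a_2,f$ closed paths at a common vertex $v$; or from the rule~2 sign cancellation $h\equiv -h$, which happens exactly when two of the incident closed paths in some decomposition of $h$ at a vertex coincide. In either case one may assume $a_1$ is a primitive $\mdeg(h)$-double path.

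Next I will list the $\mdeg(h)$-double primitive closed paths in $\Q$. Because the right outer arrows $f_i,r_i$ of each rombCnull and the loops $a,c_j$ have multiplicity~$1$ in $\un{\de}$, every $\mdeg(h)$-double primitive path is confined to a single rhombus and is either the left primitive closed path $L_i$ of some middle rombCnull (length $m$) or the rombRnull $R$ (length $r+1$). For $a_1=R$, an argument analogous to Remark~\ref{remark_inclusions_new} applies: the support of $\mdeg(h)-2\mdeg(R)$ leaves each $v_j$ isolated with only its loop $c_j$, so by Lemma~\ref{lemma_mdeg} no closed path $a_2f$ at $R'$ realises this multidegree and $h\not\equiv R^2a_2f$. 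The same support argument rules out $a_1=L_1$ for the leftmost rombCnull, since removing the arrows $p_1$ and $e_1$ isolates $u$ with only its loop $a$.

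For $L_i$ with $i\ge 2$ the residual support remains strongly connected and the support argument fails. Instead I will decompose $h$ at the base vertex $v$ of the relevant cyclic shift of $L_i$ into its three incident closed paths at $v$. The multiplicity-$1$ markers $f_j,r_j,a,c_j$ force this decomposition to contain at most one copy of $L_i$: a hypothetical second copy would consume both copies of $e_i$, leaving the third piece unable to reach $u$ in order to absorb the loop $a$. Moreover the three pieces are pairwise distinct by arrow content and length, so rule~2 only produces signed permutations and never $h\equiv -h$, and rule~3 cannot assemble a consecutive $L_i^2$ block. The main obstacle is carrying out this last verification uniformly at every base vertex $v$ on every cyclic shift of every $L_i$, tracking the multiplicity-$1$ arrows carefully; once this is done, rule~3 is excluded (together with the rule~4 exclusion from the degree count) and $h\not\equiv 0$.
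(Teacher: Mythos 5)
Your argument is a direct combinatorial attack on the rewriting system, which is a genuinely different route from the paper's, but it has a real gap at the decisive point, and you acknowledge it yourself. The preliminary steps are fine: $\deg_w(h)=3$ for all $w$ does exclude the four-factor relation, and the support argument (the proof of Remark~\ref{remark_inclusions_new}) does dispose of the double paths $R$ and $L_1$, since $\mdeg(h)-2\mdeg(a_1)$ fails to be the multidegree of a closed path. But for $L_i$ with $i\geq2$ the residual support is strongly connected, and here your plan — decompose $h$ at a base vertex of $L_i$ into its three incident closed pieces and check they are pairwise distinct and contain at most one copy of $L_i$ — does not suffice even if carried out. The relation $h\equiv a_1^2a_2$ need not be witnessed by a decomposition of the \emph{original} word $h$ at a single vertex: it may only appear after an arbitrarily long chain of rewritings at many vertices, each of which reshuffles the pieces whose ``arrow content and length'' you are comparing. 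Example~\ref{ex_proper} is exactly the warning sign: $h_1\equiv0$ and $h_2\not\equiv0$ have the same multidegree, so no amount of multiplicity bookkeeping on arrows and vertices can decide the question; one must control the global rewriting, and that is the part you defer as ``the main obstacle.'' (A smaller gap: in rule~3 the squared factor $a_1$ need not be primitive, and replacing it by a primitive double subpath does not preserve the relation $h\equiv a_1^2a_2$, so the reduction to primitive $\mdeg(h)$-double paths also needs justification.)

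The paper avoids this entirely by leaving the combinatorial side. Via Lemma~\ref{lemma_reduction}, $h\equiv0$ would make $\tr(X_h)$ decomposable; the paper then specializes the generic matrices of the leftmost rhombus to constants $I=\diag(1,-1)$, $J$, $E$ with $\tr(I)=\tr(J)=\tr(IJ)=0$, which collapses the quiver to one with one fewer rhombus while preserving decomposability of the trace. Iterating reduces to a small base quiver $\Q_0$, where an explicit sign computation (distributing the loops $y_i$ over the two copies of $x_1\cdots x_{r+1}$ and summing over $\Symm_2^{r+1}$) shows $h_0\not\equiv0$, contradicting Lemma~\ref{lemma_reduction}. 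If you want to salvage a combinatorial proof you would need, at minimum, an invariant of the $\equiv$-class finer than the multidegree that detects the presence of a repeated incident factor; the paper's matrix specialization is precisely such an invariant in disguise.
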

\begin{proof} 
Since the construction of $\Q$ and $h$ depend on $l$, we write $\Q_l$ for $\Q$ and $h_l$ for $h$ ($l\geq1$). 

Assume that $h_l\equiv0$. By Lemma~\ref{lemma_reduction}, $\tr(X_{h_l})\equiv0$. Denote $I=\left(
\begin{array}{cc}
1& 0 \\
0& -1\\
\end{array}
\right)$ and 
$J=\left(
\begin{array}{cc}
0& 1 \\
-1& 0\\
\end{array}
\right)$.
We set $X_a=I$, $X_e=J$, and $X_g=E$ for every arrow $g\not\in\{a,e,f\}$ from the left rhombus of $\Q_l$. Since $\tr(I)=\tr(J)=\tr(IJ)=0$, it is not difficult to see that $\tr(X_{h_{l-1}})\equiv0$ in $I(\Q_{l-1},(2,\ldots,2))$, where $h_0$ is defined below. Repeating this procedure, we obtain that $\tr(X_{h_0})\equiv0$ in $I(\Q_0,(2,\ldots,2))$ for 
$$h_0=x_1 y_1 \cdots x_{r+1} y_{r+1}\cdot x_1 \cdots x_{r+1},$$
where $x_1,\ldots,x_{r+1}\in\Arr{\Q_0}$, $x_1\cdots x_{r+1}$ is a closed primitive path in $\Q_0$, $y_i$ is a loop in $x_i'$ ($1\leq i\leq r+1$). For $j=1,2$ we denote 
$$z_{ij}=\left\{
\begin{array}{rl}
y_i,&  \text{if } j=1 \\
1_{x_i'},& \text{otherwise}\\
\end{array}
\right..$$
Since for all $\pi_1,\ldots,\pi_{r+1}\in \Symm_2$ 
$$x_1 z_{1,\pi_1(1)} \cdots x_{r+1} z_{r+1,\pi_{r+1}(1)}\cdot  
x_1 z_{1,\pi_1(2)} \cdots x_{r+1} z_{r+1,\pi_{r+1}(2)} \equiv \sign(\pi_1)\cdots \sign(\pi_{r+1}) h_0,$$
we obtain that $h_0\not\equiv0$. Lemma~\ref{lemma_reduction} implies a contradiction.
\end{proof}


\bigskip
\noindent{\bf Acknowledgements.} This paper was supported by RFFI 10-01-00383a. 


\end{document}